\documentclass[11pt]{article}
\newif\ifarxiv
\usepackage[activeacute,english]{babel} 
\usepackage[utf8]{inputenc}
\usepackage{verbatim} 
\frenchspacing
\usepackage{graphicx} 
\usepackage[usenames,dvipsnames]{color} 
\usepackage{enumerate,mdwlist}  
\usepackage{multirow} 
\usepackage{amsmath,amsfonts,amssymb,amsthm} 
\usepackage{xfrac}
\usepackage{empheq} 
\usepackage{bbm,dsfont} 
\usepackage{mathrsfs} 
\numberwithin{equation}{section} 
\usepackage{float} 
\usepackage[labelfont=bf,labelsep=space]{caption} 
\usepackage{empheq} 
\usepackage{yfonts}

\usepackage[OT2,OT1]{fontenc}
\DeclareRobustCommand\cyr{%
  \renewcommand\rmdefault{wncyr}%
  \renewcommand\sfdefault{wncyss}%
  \renewcommand\encodingdefault{OT2}%
  \normalfont
  \selectfont}
\DeclareTextFontCommand{\textcyr}{\cyr}

\usepackage{subcaption}

\usepackage{geometry}
 \geometry{
 a4paper,
 total={150mm,230mm},
 left=30mm,
 top=24mm,
 }

\usepackage{color}
\definecolor{red}{rgb}{.7,0,0}
\definecolor{blue}{rgb}{0,0,1}

\def\yuproj{\textrm{\cyr Yu}}

\def\bbR{\mathbb{R}}

\def\bbP{\mathbb{P}}
\def\bbS{\mathbb{S}}
\def\bbE{\mathop{\mathbb{E}}}

\def\fkf{\mathfrak{f}}
\def\fkF{\mathfrak{F}}

\def\fkh{\mathfrak{h}}

\def\fkt{\mathfrak{t}}

\def\fkx{\mathfrak{x}}
\def\fky{\mathfrak{y}}
\def\fkz{\mathfrak{z}}

\newcommand{\K}[1]{\mathbb{#1}}

\def\Tg{\mathrm{T}}
\def\diff{\mathrm{D}}

\def\dist{\mathrm{dist}}
\DeclareMathOperator{\vol}{vol}

\newcommand{\Vector}[1]{\ensuremath{\boldsymbol{#1}}}
\newcommand{\Sym}{\mathrm{\normalfont{Sym}}}
\newcommand{\PP}{\mathrm{\normalfont{P}}}
\newcommand{\LL}{L^2(\mathbb{S}^{n-1})}

\usepackage{ifpdf}
\ifpdf
\usepackage[pdfencoding=auto,unicode]{hyperref} 
\hypersetup{
 pdftoolbar=true,
 pdfmenubar=true,
 pdfstartview={FitV},
 pdftitle={Probabilistic bounds on best rank-one approximation ratio},
 pdfauthor={Khazhgali Kozhasov, Josu\'{e} Tonelli-Cueto},
 pdfsubject={tensors},
 pdfkeywords={tensors,rank-one approximation}, 
 pdfproducer={overleaf},
 linkcolor=red     
 citecolor=green   
 urlcolor=cyan     
 filecolor=magenta 
}
\fi
\usepackage{bookmark}

\title{Probabilistic bounds on best rank-one approximation ratio}
\author{
Khazhgali Kozhasov\\
Institut für Mathematik\\
Universität Osnabr\"uck\\
Osnabr\"uck, GERMANY\\
{\tt \small khazhgali.kozhasov@uni-osnabrueck.de} 
\and
Josu\'{e} Tonelli-Cueto\thanks{Supported by a postdoctoral fellowship of the 2020 ``Interaction'' program of the \emph{Fondation Sciences Mathématiques de Paris}. Partially supported by the ANR JCJC
GALOP (ANR-17-CE40-0009), the PGMO grant ALMA, and the PHC GRAPE.}\\
Inria Paris \& IMJ-PRG\\ 
Sorbonne Université\\
Paris, FRANCE\\
{\tt \small josue.tonelli.cueto@bizkaia.eu}
}
\date{}


\makeatletter
\def\th@plain{%
  \thm@notefont{}
  \slshape 
}
\def\th@definition{%
  \thm@notefont{}
  \normalfont 
}
\makeatother
\theoremstyle{plain}
\newtheorem{lem}{Lemma}[section]
\newtheorem{prop}[lem]{Proposition}
\newtheorem{theo}[lem]{Theorem}
\newtheorem*{theo*}{Theorem}
\newtheorem*{temptheo*}{Template Theorem}

\theoremstyle{definition}

\theoremstyle{remark}

\newtheorem{remark}[lem]{Remark}

\usepackage{multicol}
\setlength{\columnsep}{2cm}

\usepackage[ruled,algosection,vlined]{algorithm2e}
\SetKwInOut{postcondition}{Postcondition}
\SetKwInOut{precondition}{Precondition}
\makeatletter
\let\original@algocf@latexcaption\algocf@latexcaption
\long\def\algocf@latexcaption#1[#2]{%
  \@ifundefined{NR@gettitle}{%
    \def\@currentlabelname{#2}%
  }{%
    \NR@gettitle{#2}%
  }%
  \original@algocf@latexcaption{#1}[{#2}]%
}
\makeatother


\begin{document}
\maketitle
\begin{abstract}
We provide new upper and lower bounds on the minimum possible ratio of the spectral and Frobenius norms of a (partially) symmetric tensor. In the particular case of general tensors our result recovers a known upper bound. For symmetric tensors our upper bound unveils that the ratio of norms has the same order of magnitude as the trivial lower bound $1/n^{\frac{d-1}{2}}$, when the order of a tensor $d$ is fixed and the dimension of the underlying vector space $n$ tends to infinity. However, when $n$ is fixed and $d$ tends to infinity, our lower bound is better than $1/n^{\frac{d-1}{2}}$.
\end{abstract}
\noindent{\bf Keywords:} Frobenius norm, symmetric tensors, spectral norm, rank-one approximation, random tensors\\
\noindent{\bf MSC Codes:} 15A69, 26C05, 41A50

\section{Introduction}

Representation of data sets in compact and simple formats is an important problem of data science with numerous applications. Vectors, matrices and, more generally, tensors are used to naturally model data points. It is often necessary to retain only some key properties of a data set, that corresponds to an approximation of a tensor by another one with a simpler structure. There are several different models, based on tensor decompositions, that are used for this purpose, see \cite{KoBa, FrTa} and references therein. 
An important special case is an approximation of a given ``data"-tensor with a \emph{rank-one tensor}, see \cite{FrMePaSu}.

For $\K{K}=\K{R}$ or $\K{K}=\K{C}$ let $\K{K}^{\mathbf{n}}=\K{K}^{n_1}\otimes \dots\otimes \K{K}^{n_d}$ denote the $\K{K}$-vector space of \emph{$\mathbf{n}$-tensors} with $\mathbf{n}=(n_1,\dots, n_d)$.
A natural way to measure distance between tensors is given by the norm associated to \emph{the Frobenius (also known as Hilbert-Schmidt) product}, which is defined by the formula
\begin{align}\label{eq:inner_product}
\langle T,T'\rangle\ :=\ \sum_{i_j=1}^{n_j} \overline{t_{i_1\dots i_d}} t_{i_1\dots i_d}',\quad T\ =\ (t_{i_1\dots i_d}),\ T'\ =\ (t'_{i_1\dots i_d}).        
\end{align}
A tensor $T=(t_{i_1\dots i_d})$ is said to be \emph{of rank one}, if there exist unit vectors $\Vector{x}^j\in \bbS(\K{K}^{n_j})$ and a scalar $\lambda\in\K{K}$ such that $t_{i_1\dots i_d}=\lambda x^1_{i_1}\dots x^d_{i_d}$. In this case we write $T=\lambda \Vector{x}^1\otimes \dots\otimes \Vector{x}^d$. 
\emph{The problem of best rank-one approximation} of a tensor $T$ consists in finding a closest rank-one tensor to $T$, i.e.,
\begin{align}
    \underset{\lambda \in \K{K},\ \Vector{x}^j\in \bbS(\K{K}^{n_j})}{\mathrm{\normalfont{min}}}\ \Vert T-\lambda\Vector{x}^1\otimes \dots\otimes \Vector{x}^d\Vert,
\end{align}
where $\Vert \cdot\Vert:=\sqrt{\langle \cdot,\cdot\rangle}$ is \emph{the Frobenius norm} of a tensor. 
This problem is essentially equivalent (see \eqref{eq:error}) to computing \emph{the spectral norm} of $T$, 
\begin{align}\label{eq:spectral}
    \Vert T\Vert_\infty\ :=\ \max_{\Vector{x}^j\in \bbS(\K{K}^{n_j})} \vert \langle T,\Vector{x}^1\otimes\dots\otimes \Vector{x}^d\rangle \vert,
    \end{align}
    and is known to be NP-hard \cite[Thm. 1.13]{HiLi}. 
If $\lambda\Vector{x}^1\otimes \dots\otimes \Vector{x}^d$ is a best rank-one approximation of $T$, then (the square of) \emph{the relative best rank-one approximation error} equals (see, e.g., \cite[Thm. 2.19]{QiLuo})
\begin{align}\label{eq:error}
    \frac{\Vert T-\lambda\Vector{x}^1\otimes\dots\otimes\Vector{x}^d\Vert^2}{\Vert T\Vert^2}\ =\ 1-\frac{\ \Vert T\Vert_\infty^2}{\Vert T\Vert^2}.
\end{align}
The smallest possible ratio of the spectral and the Frobenius norms
\begin{align}\label{eq:BROA}
    \mathcal{A}(\K{K}^{\mathbf{n}})\ :=\ \min_{T\in \K{K}^\mathbf{n}} \frac{\ \ \Vert T\Vert_\infty}{\Vert T\Vert}
\end{align}
is known as \emph{the best rank-one approximation ratio} of the space $\K{K}^{\mathbf{n}}$ (see \cite{Qi2011} and also \cite{KuPe}).
 Computing $\mathcal{A}(\K{K}^{\,\mathbf{n}})$ is equivalent to finding the largest (worst) relative best rank-one approximation error \eqref{eq:error}. 
Note also that $0<\mathcal{A}(\K{K}^\mathbf{n})\leq 1$ and $\mathcal{A}(\K{K}^{\mathbf{n}})$ is just the largest constant $c>0$ so that $\Vert T\Vert_\infty \geq c\Vert T\Vert$ holds for all $T\in \K{K}^\mathbf{n}$.
The number \eqref{eq:BROA} is an attribute of a tensor space and thus depends only on the underground field $\K{K}$ and dimensions $n_1,\dots, n_d$.
On the application side, the best rank-one approximation ratio governs the convergence rate of \emph{greedy rank-one update algorithms}, see \cite{Qi2011, Usch}.

A tensor $T=(t_{i_1\dots i_d})$ of format $(n,\dots,n)$ is called \emph{symmetric}, if $t_{i_{\sigma_1}\dots i_{\sigma_d}} = t_{i_1\dots i_d}$ holds for any permutation on $d$ elements $\sigma$. 
A best rank-one approximation to a symmetric tensor $T$ can be chosen among symmetric rank-one tensors $\lambda \Vector{x}\otimes \dots\otimes \Vector{x}$, see \cite{Banach}. 
\emph{The best rank-one approximation ratio} of the space $\Sym^d(\K{K}^n)$ of symmetric tensors is defined as
\begin{align}\label{eq:SBROA}
    \mathcal{A}(\Sym^d(\K{K}^n))\ :=\ \min_{T\in \Sym^d(\K{K}^n)} \frac{\ \ \Vert T\Vert_\infty}{\Vert T\Vert}.
\end{align}
Computing $\mathcal{A}(\Sym^d(\K{K}^n))$ is equivalent to finding the largest (worst) relative best rank-one approximation error \eqref{eq:error} among symmetric tensors in $\Sym^d(\K{K}^n)$. 
Also, by definition, one has $1\geq \mathcal{A}(\Sym^d(\K{K}^n))\geq \mathcal{A}(\K{K}^\mathbf{n})\geq 0$ for $\mathbf{n}=(n,\dots, n)$.

Finding explicit values for \eqref{eq:BROA} and \eqref{eq:SBROA} is a beautiful mathematical problem with interesting connections to composition algebras \cite{LNSU2018} and Chebyshev polynomials \cite{AKU}.

\subsection{General tensors}

The exact value of $\mathcal{A}(\K{K}^{\,\mathbf{n}})$ and of $\mathcal{A}(\Sym^d(\K{K}^n))$ remains unknown for most $\mathbf{n}=(n_1,\dots, n_d)$, $d$ and $n$. 
One has a general lower bound (see, e.g., \cite{LNSU2018})
\begin{align}\label{eq:trivial_bound}
    \mathcal{A}(\K{K}^\mathbf{n})\ \geq\ \frac{1}{\sqrt{\min_{j=1,\dots, d} \prod_{i\neq j} n_i}}.
\end{align}
The equality holds only if so called \emph{orthogonal ($\,\K{K}=\K{R}$) or, respectively, unitary ($\,\K{K}=\K{C}$) tensors} exist in the tensor space $\K{K}^\mathbf{n}$ (see \cite{LNSU2018}). For example, if $\K{K}=\K{R}$ and for $\mathbf{n}=(n,\dots,n)$ this happens only if $n=1, 2, 4$ or $8$, which are dimensions of the four composition $\K{R}$-algebras. 
It is known that (at least for $\K{K}=\K{R}$) the bound \eqref{eq:trivial_bound} gives the correct order of magnitude when $d$ is fixed.
Specifically, using probabilistic estimates of the uniform norm of random tensors from \cite{TS2014}, the authors of \cite{LNSU2018} prove that the right inequality in
\begin{align*}
\frac{1}{\sqrt{\min_{j=1,\dots, d} \prod_{i\neq j} n_i}}\ \leq\ \mathcal{A}(\K{R}^\mathbf{n})\ \leq\ \frac{\ \ \Vert T\Vert_\infty}{\Vert T\Vert}\ \leq\ \frac{C \sqrt{d\ln d}}{\sqrt{\min_{j=1,\dots,d} \prod_{i\neq j} n_i}}    
\end{align*}
holds with positive probability in $T$, where $C$ is some constant and the entries of $T$ are independent standard Gaussians. 
With similar techniques it was proven earlier \cite{CKP1999} that 
\begin{align*}
    \frac{1}{n}\ \leq\ \mathcal{A}(\K{C}^{n}\otimes \K{C}^{n}\otimes \K{C}^{n})\ \leq\ \frac{3\sqrt{\pi}}{n}. 
\end{align*}
In this work, we reprove these probabilistic upper bounds giving explicit values for the constant $C$ for tensors of arbitrary order $d$.
\begin{theo}\label{thm:combig}
For any $d\geq 3$ and $\Vector{n}=(n_1,\dots, n_d)$ with $n_1,\dots, n_d\geq 2$ we have
\begin{equation}\label{eq:combig}
     \frac{1}{\sqrt{\min_i \prod_{j\neq i} n_j}}\ \leq\ \mathcal{A}(\K{K}^\mathbf{n})\ \leq\ \frac{10 \sqrt{d\ln d}}{\sqrt{\min_i \prod_{j\neq i} n_j}}.
\end{equation}
\end{theo}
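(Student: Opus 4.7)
The lower bound in \eqref{eq:combig} is just the general bound \eqref{eq:trivial_bound}, so only the upper bound needs proof. The approach is the classical random construction: sample $T\in \K{K}^{\mathbf{n}}$ with independent standard Gaussian entries and show that, with strictly positive probability, both
\begin{align*}
\|T\|^{2} \;\geq\; \tfrac{1}{2}N \qquad \text{and} \qquad \|T\|_{\infty} \;\leq\; C\sqrt{d\ln d\,\max_{j} n_{j}}
\end{align*}
hold simultaneously, where $N=\prod_i n_i$ and $C$ is an explicit constant. Since $N/\max_j n_j = \min_i\prod_{j\neq i} n_j$, any tensor $T$ in this intersection certifies
\begin{align*}
\mathcal{A}(\K{K}^{\mathbf{n}}) \;\leq\; \frac{\|T\|_{\infty}}{\|T\|} \;\leq\; \frac{\sqrt{2}\,C\,\sqrt{d\ln d}}{\sqrt{\min_i\prod_{j\neq i} n_j}},
\end{align*}
so it suffices to choose constants along the way so that $\sqrt{2}\,C\leq 10$.

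The first event is routine: $\|T\|^{2}$ is (up to the usual factor $1/2$ in the complex case) a chi-squared variable with $\sim N$ degrees of freedom, and standard lower-tail concentration gives $\|T\|^{2}\geq N/2$ with probability at least, say, $3/4$. For the second event I would run a volumetric $\epsilon$-net argument on the product of unit spheres $\bbS(\K{K}^{n_1})\times\cdots\times\bbS(\K{K}^{n_d})$. Each factor admits an $\epsilon$-net of cardinality at most $(3/\epsilon)^{n_j}$ (with the exponent doubled in the complex case). For any fixed rank-one unit tensor, $\langle T,\mathbf{x}^{1}\otimes\cdots\otimes\mathbf{x}^{d}\rangle$ is a standard Gaussian, so a Gaussian tail bound together with a union bound over the product net control the maximum over the net at the scale $\sqrt{\sum_j n_j\,\ln(3/\epsilon)}$ with probability at least $3/4$. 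To pass from the net back to the whole sphere I would use the telescoping identity
\begin{align*}
\mathbf{x}^{1}\otimes\cdots\otimes\mathbf{x}^{d}-\mathbf{y}^{1}\otimes\cdots\otimes\mathbf{y}^{d} \;=\; \sum_{i=1}^{d}\mathbf{y}^{1}\otimes\cdots\otimes(\mathbf{x}^{i}-\mathbf{y}^{i})\otimes\cdots\otimes\mathbf{x}^{d},
\end{align*}
which together with $\|\mathbf{x}^{i}-\mathbf{y}^{i}\|\leq\epsilon$ bounds the pairing of $T$ against the difference by $d\epsilon\|T\|_{\infty}$; choosing $\epsilon=1/(2d)$ absorbs this into $\tfrac12\|T\|_{\infty}$ and leaves $\|T\|_{\infty}\leq 2\max_{\mathrm{net}}|\langle T,\cdot\rangle|$.

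Combining the two bounds, together with $\sum_j n_j \leq d\,\max_j n_j$ and $\ln(6d)=O(\ln d)$, delivers the announced estimate up to a constant. The main obstacle is purely quantitative: carefully tuning the confidence thresholds, the mesh parameter $\epsilon$, and the real-versus-complex factors so that the final constant is at most $10$ rather than merely $O(1)$. No new idea beyond the classical $\epsilon$-net method for the spectral norm of a random Gaussian operator is involved.
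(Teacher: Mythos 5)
Your proposal is correct in outline and would establish the bound, but it takes a genuinely different route from the paper. Both approaches rest on the same three pillars — the trivial lower bound \eqref{eq:trivial_bound}, subgaussian concentration for the evaluation $\langle T,\Vector{x}^1\otimes\cdots\otimes\Vector{x}^d\rangle$, and a covering/chaining step to pass from a pointwise bound to the supremum over the product of spheres — but the implementations diverge in two respects.

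First, you decouple the denominator from the numerator: you bound $\|T\|^2$ below by chi-squared concentration and $\|T\|_\infty$ above by an $\epsilon$-net argument, then take a union bound. The paper instead treats the ratio $\|T\|_\infty/\|T\|$ as a single random variable. The pointwise quantity $|\langle T,\Vector{x}^1\otimes\cdots\otimes\Vector{x}^d\rangle|/\|T\|$ is exactly the norm ratio of an orthogonal projection applied to a Gaussian vector, which is Beta-distributed (Proposition~\ref{prop:projection}); this is a sharper fact than two separate tail bounds glued by a union bound, and it also means the final passage from a tail bound to $\mathcal{A}(\K{K}^{\mathbf{n}})=\min_T\|T\|_\infty/\|T\|$ can use the ``essential infimum trick'' of Remark~\ref{remark:mintrick}, which shaves a factor $\sqrt 2$ off what taking an expectation would give.

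Second, your covering step is a classical $\epsilon$-net with mesh $\epsilon=1/(2d)$, cardinality $(3/\epsilon)^{\sum n_j}$, and the telescoping Lipschitz bound — exactly the argument the paper acknowledges as an alternative in the remark after Theorem~\ref{theo:generalboundtheo}. The paper replaces it by a volume-ratio argument: if the maximum exceeds $t$, then by the Lipschitz bound a whole geodesic ball of radius $(2L)^{-1}$ around the maximizer lies in the superlevel set $\{\fkF\geq t/2\}$, and Markov's inequality converts the resulting volume fraction into a tail bound. This avoids constructing any net, avoids the slack inherent in covering a sphere by overlapping caps, and yields the explicit constant $C(L,d;n_1,\dots,n_d)$ of Theorem~\ref{theo:generalboundtheo}, which in turn gives $2\sqrt{3e}\sqrt{1+2/\ln d}\leq 10$ with room to spare.

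In short: your plan is a valid and more textbook proof of the same theorem, and you correctly flag that the only risk is quantitative — with $\epsilon=1/(2d)$ and a crude union bound, the constant in front of $\sqrt{d\ln d}$ comes out around $4\sqrt{\ln(6d)/\ln d}$ (plus lower-order contributions from the chi-squared tail and the union-bound confidence levels), which does stay below $10$ for $d\geq 3$ but without the margin the paper's argument enjoys. The paper's detour through Beta concentration and the volume-ratio covering is precisely what makes the constant $10$ comfortably achievable, and also what makes the method uniformly reusable for the symmetric and partially symmetric cases treated later, where the ``evaluation at a point'' projection has rank much larger than one and the denominator is no longer independent of the numerator in the naive way a chi-squared/union-bound argument would want.
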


\subsection{Symmetric tensors}

However, our biggest contribution regards symmetric tensors. Unlike the general case, the problem of estimating $\mathcal{A}(\Sym^d(\K{R}^n))$ is largely open. The best known upper bound, obtained by Li and Zhao in \cite[Thm. 5.3]{LiZhao}, concerns real symmetric tensors of order $d=3$: 
\[\mathcal{A}(\Sym^3(\K{R}^n))\ \leq\ \frac{1.5}{n^{\frac{\ln 1.5}{\ln 2}}}\ \leq\ \mathcal{O}\left(n^{-0.584}\right).\]
Our main result (see Theorem~\ref{thm:combi}) stated for $d=3$ improves this bound to the optimal  $\mathcal{A}(\Sym^3(\K{R}^n))=\mathcal{O}\left(1/n\right)$.
For an arbitrary fixed $d\geq 3$, our main theorem shows that the lower bound for general tensors is also optimal (up to a constant) for symmetric tensors of order $d$. Quite surprisingly, Theorem \ref{thm:combi} shows that when $n$ is fixed and $d$ grows, the behaviour of $\mathcal{A}(\Sym^d(\K{R}^n))$ significantly differs from that of $\mathcal{A}(\K{K}^\mathbf{n})$, $\Vector{n}=(n,\dots, n)$. For example, for a fixed $n\geq 3$, it follows from our result that
\begin{align*}
\lim_{d\to\infty}\frac{\mathcal{A}(\Sym^d(\K{R}^n))}{\mathcal{A}(\Sym^d(\K{C}^n))}\ =\
\lim_{d\to\infty}\frac{\mathcal{A}(\K{K}^\mathbf{n})}{\mathcal{A}(\Sym^d(\K{R}^n))}\ =\ 0.
\end{align*}
In \cite[Cor. $1.8$]{AKU} it was shown that the inequality $\mathcal{A}(\Sym^d(\K{R}^n))>1/n^{\frac{d-1}{2}}$, $d>2$, is strict even when  $n=4$ or $n=8$, that is, when $\mathcal{A}(\K{R}^{\mathbf{n}})=1/n^{\frac{d-1}{2}}$, $\mathbf{n}=(n,\dots, n)$. To our knowledge, no general lower bounds on $\mathcal{A}(\Sym^d(\K{K}^n))$ that are different from the trivial bound \eqref{eq:trivial_bound} were known.
In Theorem \ref{thm:combi} we discover a new lower bound on $\mathcal{A}(\Sym^d(\K{R}^n))$ that, when $d$ is large compared to $n$, is much better than \eqref{eq:trivial_bound}.

We now state our main theorem. To our knowledge, all the bounds here are novel.

\begin{theo}\label{thm:combi}
For any $d\geq 3$ and $n\geq 2$  we have
\begin{equation}\label{eq:combi}
\begin{aligned}
      \max\left\{\frac{1}{2^{\frac{d}{2}}}\binom{d+n-1}{d}^{-\frac{1}{2}},\ \frac{1}{n^{\frac{d-1}{2}}}\right\}\ &\leq\ \mathcal{A}(\Sym^d(\K{R}^n))\ \leq\ \frac{6\sqrt{n\ln d}}{2^{\frac{d}{2}}}\binom{d+\frac{n}{2}-1}{d}^{-\frac{1}{2}},\\
      \max\left\{\binom{d+n-1}{d}^{-\frac{1}{2}},\ \frac{1}{n^{\frac{d-1}{2}}}\right\}\ &\leq\ \mathcal{A}(\Sym^d(\K{C}^n))\ \leq\ 10\sqrt{n\ln d}\binom{d+n-1}{d}^{-\frac{1}{2}}.
\end{aligned}
\end{equation}
In particular, we have that
\begin{align}\label{eq:sym_bound}
       \frac{1}{n^{\frac{d-1}{2}}}\ \leq\ \mathcal{A}(\Sym^d(\K{K}^n))\ \leq\ 6\left(1+\frac{1}{\ln d}\right) \sqrt{d!\ln d}\frac{1}{n^{\frac{d-1}{2}}},
\end{align}
and, for $d\geq n^2/4$, we have
\begin{equation}\label{eq:asymptotic}
\begin{aligned}
         \sqrt{\frac{(n-1)!}{2^dd^{n-1}}}\left(1-\frac{n^2}{4d}\right)\ &\leq\ \mathcal{A}(\Sym^d(\K{R}^n))\ \leq\ 9\sqrt{\frac{\left(\frac{n}{2}\right)!\ln d}{2^d d^{\frac{n}{2}-1}}}\left(1+\frac{1}{4d}\right)\\
          \sqrt{\frac{(n-1)!}{d^{n-1}}}\left(1-\frac{n^2}{4d}\right)\ &\leq\ 
           \mathcal{A}(\Sym^d(\K{C}^n))\ \leq\ 10\sqrt{\frac{n!\ln d}{d^{n-1}}}
\end{aligned},
\end{equation}
where $\left(\frac{n}{2}\right)!:=\Gamma\left(\frac{n}{2}+1\right)$ allows for a better and easier comparison of the bounds.
\end{theo}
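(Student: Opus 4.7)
The plan is to establish the four bounds in \eqref{eq:combi} via integration identities on spheres (for the lower bounds) and probabilistic constructions with random Gaussian symmetric tensors (for the upper bounds). The additional bounds \eqref{eq:sym_bound} and \eqref{eq:asymptotic} will then follow from \eqref{eq:combi} by Stirling-type asymptotics of the binomial coefficients. The trivial lower bound $1/n^{(d-1)/2}$ in each case is just \eqref{eq:trivial_bound} applied to $\mathbf{n}=(n,\ldots,n)$ combined with the inclusion $\Sym^d(\K{K}^n)\subseteq\K{K}^{\mathbf{n}}$.

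For the complex lower bound I would identify $T\in\Sym^d(\bbC^n)$ with its associated homogeneous polynomial $f(z)=\langle T,z^{\otimes d}\rangle$ and use that the Frobenius inner product corresponds to the apolar (Bombieri) inner product. Since the monomials $z^\alpha$ are orthogonal under the unitarily-invariant probability measure $\sigma$ on $\bbS(\bbC^n)$, a direct computation yields
\[
\int_{\bbS(\bbC^n)} |f(z)|^2\, d\sigma(z)\ =\ \binom{d+n-1}{d}^{-1}\|T\|^2,
\]
and the inequality $\|T\|_\infty^2\geq\int|f|^2\,d\sigma$ gives the bound. For the real lower bound I would compare real and complex spectral norms of a real polynomial $f$. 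Writing $f(x+iy)=\sum_k i^k\binom{d}{k}\widetilde{F}(x^{d-k},y^k)$, where $\widetilde{F}$ is the symmetric multilinear form associated to $f$, and applying Banach's estimate $|\widetilde{F}(x^{d-k},y^k)|\leq\|f\|_\infty^{\bbR}\|x\|^{d-k}\|y\|^k$ for symmetric forms, a summation of real and imaginary parts under the constraint $\|x\|^2+\|y\|^2=1$ yields $\|f\|_\infty^{\bbC}\leq 2^{d/2}\|f\|_\infty^{\bbR}$. Since a real $T$ has the same Frobenius norm when viewed in $\Sym^d(\bbR^n)$ or $\Sym^d(\bbC^n)$, combining this with the complex lower bound produces the real one.

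For the complex upper bound I would take a random complex Gaussian $T\in\Sym^d(\bbC^n)$ with independent coordinates scaled so that $f(z)$ is a standard complex Gaussian of unit variance for every $z\in\bbS(\bbC^n)$. Then $\bbE[\|T\|^2]=\binom{d+n-1}{d}$ and $\chi^2$-concentration keeps $\|T\|$ of this order with positive probability. An $\epsilon$-net on $\bbS(\bbC^n)$ of size $(C/\epsilon)^{2n}$, with $\epsilon\sim 1/d$ chosen to control the Lipschitz error of $f$, together with Gaussian tails and a union bound, yields $\|T\|_\infty\lesssim\sqrt{n\ln d}$ with positive probability. For the real upper bound I would take a complex Gaussian $T\in\Sym^d(\bbC^{\lfloor n/2\rfloor})$, identify $\bbC^{\lfloor n/2\rfloor}$ with a subspace of $\bbR^n$ via $z=x+iy$, and define $S\in\Sym^d(\bbR^n)$ as the real symmetric tensor associated with the polynomial $\mathrm{Re}(f)$. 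The spectral inequality $\|S\|_\infty^{\bbR}\leq\|T\|_\infty^{\bbC}$ is immediate, while a monomial-by-monomial apolar identity of the form $\|\mathrm{Re}(f)\|^2+\|\mathrm{Im}(f)\|^2=2^d\|f\|^2$ together with the rotational symmetry $f\mapsto if$ gives $\bbE[\|S\|^2]\sim 2^{d-1}\bbE[\|T\|^2]$. Combining with the complex upper bound applied to $\Sym^d(\bbC^{\lfloor n/2\rfloor})$ then yields the real upper bound.

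The main obstacle is the real upper bound: establishing the precise identity comparing the apolar norms of $\mathrm{Re}(f)$ and $f$ on $\bbC^{\lfloor n/2\rfloor}$, together with the careful handling of the parity of $n$ so that the final bound cleanly involves $\binom{d+n/2-1}{d}$ (interpreted via the Gamma function when $n$ is odd), requires the most delicate bookkeeping. The corollaries \eqref{eq:sym_bound} and \eqref{eq:asymptotic} then follow from \eqref{eq:combi} by routine Stirling estimates.
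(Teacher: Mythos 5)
Your overall architecture matches the paper on the lower bounds and the shape of the probabilistic upper bound, but the route you take to the crucial real upper bound is genuinely different from the paper's, so the two are worth comparing.

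For the lower bounds your plan coincides with the paper: the complex bound comes from the cubature identity $\|f\|^2=\binom{d+n-1}{d}\,\bbE_{\fkz}|f(\fkz)|^2$, and the real bound comes from combining this with $\|f\|_{\infty,\K{C}}\le 2^{d/2}\|f\|_{\infty,\K{R}}$. The paper simply cites Siciak for the latter inequality, whereas you propose to reprove it directly through Banach's multilinear estimate; either works, and your route is a nice self-contained alternative.

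For the upper bounds the paper replaces $\epsilon$-nets by a volumetric argument (Theorem~\ref{theo:generalboundtheo} together with the $\beta$-distribution of $\|P\fkx\|_2^2/\|\fkx\|_2^2$ in Proposition~\ref{prop:projection}) and tracks the random variable $\|\fkf\|_\infty/\|\fkf\|$ as a whole, while you separately concentrate numerator and denominator and union-bound over a net. Both are valid; the paper's version is what yields the clean constants $6$ and $10$, and the paper explicitly remarks that no net can cover the sphere without overlap, so a net-based argument inevitably loses a constant factor. More substantively, for the real case the paper samples a Gaussian \emph{harmonic} $\fkh\in\mathrm{H}_{d,n}$ and exploits the irreducibility of $\mathrm{H}_{d,n}$ under $O(n)$, the proportionality of the $L^2$ and Bombieri-Weyl products (Lemma~\ref{lem:comparison}), and the zonal harmonic $Z_{\Vector{x}}$ to realize point evaluation as an orthogonal projection. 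You instead take a Kostlan $f\in\Sym^d(\K{C}^{\lfloor n/2\rfloor})$ and pass to $\mathrm{Re}(f)\in\Sym^d(\K{R}^{n})$. This is legitimate: $\mathrm{Re}(f)$ is in fact harmonic (since $\Delta_{\K{R}^{2m}}=4\sum_j\partial_{z_j}\partial_{\bar z_j}$ kills both $f$ and $\bar f$), and the map $\mathrm{Re}$ is $U(m)$-equivariant on an irreducible real $U(m)$-representation, so by Schur it is a scalar multiple of an isometry with respect to any pair of invariant inner products; normalizing at $f=z_1^d$ gives $\|\mathrm{Re}(f)\|=2^{(d-1)/2}\|f\|$ \emph{deterministically}. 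Plugging this into the complex upper bound on $\Sym^d(\K{C}^{n/2})$ produces exactly the factor $2^{-d/2}\sqrt{n\ln d}\binom{d+n/2-1}{d}^{-1/2}$, so the two approaches land on the same estimate.

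Two points in your sketch need tightening. First, you only claim $\bbE\|S\|^2\sim 2^{d-1}\bbE\|T\|^2$; an expectation identity is not enough, since $\|S\|/\|T\|$ is random and can be arbitrarily small (e.g.\ $\mathrm{Re}(f)=0$ if $f$ is purely imaginary). You must upgrade to the deterministic similarity $\|\mathrm{Re}(f)\|=2^{(d-1)/2}\|f\|$ stated above — fortunately this is true, but it is the step that needs a proof, not the weaker expectation statement. Second, for odd $n$ your construction only reaches $\Sym^d(\K{R}^{n-1})\subset\Sym^d(\K{R}^{n})$, and the resulting binomial is $\binom{d+(n-1)/2-1}{d}$, which is strictly smaller than the theorem's $\binom{d+n/2-1}{d}$; so your bound is slightly worse for odd $n$. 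The paper's harmonic approach sidesteps this because $\Gamma(n/2)$ enters directly through Lemma~\ref{lem:comparison} and the volume of $\K{S}^{n-1}$, yielding the half-integer binomial naturally for all $n$.
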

\begin{remark}
We require that $d\geq 3$. When  $d=2$, we deal with matrices, in which case
\begin{align*}
    \mathcal{A}(\Sym^2(\K{R}^n))\ =\ \mathcal{A}(\Sym^2(\K{C}^n))\ =\ \frac{1}{\sqrt{n}},
    \end{align*}
and the bound for the ratio of norms is reached, for example, for the identity matrix.
\end{remark}

Theorem \ref{thm:combi} in particular implies that $\mathcal{A}(\K{K}^{\mathbf{n}})$ and $\mathcal{A}(\Sym^d(\K{K}^n))$ have the same order of magnitude $n^{-\frac{d-1}{2}}$ when $n$ is large and $d$ is bounded. 
Recently, Cao et al. gave an alternative derivation of this fact using partitioned block tensors, see \cite[Thm. 4.6]{Cao}.



\subsection{Partially symmetric tensors}

In Subsection \ref{sub:BROA} we recall the definition of a partially symmetric tensor as well as of the best rank-one approximation ratio \eqref{eq:PSBROA} of the space $\mathcal{A}\left(\bigotimes_{j=1}^m \Sym^{d_j}(\K{R}^{n_j})\right)$ of all such tensors.
Our methods can be also applied to this case as the following theorem shows. 

\begin{theo}\label{them:combi2}
For $m\geq 1$, $d_1,\ldots,d_m\geq 2$ with $\max_j d_j\geq 3$ and $n_1,\ldots,n_m\geq 2$ we have
\begin{multline}\label{eq:combi2R}
 \max\left\{\frac{1}{2^{\frac{1}{2}\sum_{j=1}^m d_j}}\prod_{j=1}^m\binom{d_j+n_j-1}{d_j}^{-\frac{1}{2}},\ \sqrt{\frac{\max_j n_j}{\prod_{j=1}^m n_j^{d_j}}}\right\}\ \leq\ \mathcal{A}\left(\bigotimes_{j=1}^m \Sym^{d_j}(\K{R}^{n_j})\right)\ \\
 \leq\ \frac{6\sqrt{\left(\sum_{j=1}^m n_j\right)\ln\left(m\max_j d_j\right)}}{2^{\frac{1}{2}\sum_{j=1}^m d_j}}\prod_{j=1}^m \binom{d_j+\frac{n_j}{2}-1}{d_j}^{-\frac{1}{2}}   
\end{multline}
and
\begin{multline}\label{eq:combi2C}
 \max\left\{\prod_{j=1}^m\binom{d_j+n_j-1}{d_j}^{-\frac{1}{2}},\ \sqrt{\frac{\max_j n_j}{\prod_{j=1}^m n_j^{d_j}}}\right\}\  \leq\ \mathcal{A}\left(\bigotimes_{j=1}^m \Sym^{d_j}(\K{C}^{n_j})\right)\ \\
 \leq\ 10\sqrt{\left(\sum_{j=1}^m n_j\right)\ln\left(m\max_j d_j\right)}\prod_{j=1}^m\binom{d_j+n_j-1}{d_j}^{-\frac{1}{2}}.
\end{multline}
\end{theo}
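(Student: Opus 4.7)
The proof parallels those of Theorems~\ref{thm:combig} and~\ref{thm:combi}, with the single sphere $\bbS(\K{K}^n)$ replaced throughout by the product of spheres $\prod_{j=1}^m\bbS(\K{K}^{n_j})$, and the symmetric group $S_d$ replaced by the product $S_{d_1}\times\cdots\times S_{d_m}$ acting blockwise. Both the lower and the upper bound decompose into a deterministic basis-expansion step and a probabilistic Gaussian step, respectively, just as in the fully symmetric case.

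\textbf{Lower bound.} The maximum in the lower bound has two summands, each handled separately. For the bound $\sqrt{\max_j n_j/\prod_j n_j^{d_j}}$, I would view a partially symmetric tensor as a general tensor in $\K{K}^{\mathbf{n}}$ with $\mathbf{n}=(\underbrace{n_1,\dots,n_1}_{d_1},\dots,\underbrace{n_m,\dots,n_m}_{d_m})$; the partially symmetric analogue of Banach's theorem (that the spectral norm is unchanged when one restricts to rank-one tensors that respect the symmetry) lets me directly apply the trivial bound~\eqref{eq:trivial_bound} of Theorem~\ref{thm:combig}, and one computes $\min_i\prod_{k\neq i}n_k=\prod_j n_j^{d_j}/\max_j n_j$. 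For the binomial-coefficient bound, I would expand $T=\sum_\alpha c_\alpha e_\alpha$ in an orthonormal basis of $\bigotimes_j\Sym^{d_j}(\K{K}^{n_j})$ consisting of monomials scaled by the appropriate Bombieri (real) or apolar (complex) weights; this basis has $\prod_j\binom{d_j+n_j-1}{d_j}$ elements. Evaluation at a rank-one partially symmetric unit tensor together with Cauchy--Schwarz yields $\Vert T\Vert\leq C\bigl(\prod_j\binom{d_j+n_j-1}{d_j}\bigr)^{1/2}\Vert T\Vert_\infty$, where $C=2^{\sum d_j/2}$ in the real case (reflecting the crude bound $\binom{d}{\alpha}\leq 2^d$ when comparing the Bombieri basis to sup-norm on the real sphere) and $C=1$ in the complex case.

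\textbf{Upper bound.} I would take $T$ to be a Gaussian random partially symmetric tensor whose coordinates in the appropriate (Bombieri or apolar) basis are i.i.d.\ standard normal, so that $\bbE\Vert T\Vert^2$ matches the squared inverse of the target binomial prefactor. A two-sided Borell--TIS concentration step then reduces the problem to estimating $\bbE\Vert T\Vert_\infty$, and the target upper bound follows from
\[
\mathcal{A}\Bigl(\bigotimes_{j=1}^m\Sym^{d_j}(\K{K}^{n_j})\Bigr)\ \leq\ \frac{\bbE\Vert T\Vert_\infty}{\sqrt{\bbE\Vert T\Vert^2}}\,(1+o(1)).
\]
The bound on $\bbE\Vert T\Vert_\infty$ comes from a Dudley-type chaining / $\varepsilon$-net argument on $\prod_j\bbS(\K{K}^{n_j})$: the covering number at scale $\varepsilon$ is of order $(C/\varepsilon)^{\sum_j n_j}$ (giving the $\sqrt{\sum_j n_j}$ factor), and the Lipschitz constant of the multilinear map $(\Vector{x}^{(1)},\dots,\Vector{x}^{(m)})\mapsto\langle T,\bigotimes_j\Vector{x}^{(j),\otimes d_j}\rangle$ on the product of spheres is controlled by $m\max_j d_j$ (giving the $\sqrt{\ln(m\max_j d_j)}$ factor via the standard sub-Gaussian union bound).

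\textbf{Main obstacle.} The hardest step is the estimate on $\bbE\Vert T\Vert_\infty$: getting the dimension exponent to be $\sum_j n_j$ rather than $\prod_j n_j$ requires the net/chaining argument to exploit the product structure of $\prod_j\bbS(\K{K}^{n_j})$, and getting the logarithmic factor $\ln(m\max_j d_j)$ rather than something larger requires a blockwise Lipschitz estimate. The real case carries the additional subtlety that its upper bound features $\binom{d_j+n_j/2-1}{d_j}$ instead of $\binom{d_j+n_j-1}{d_j}$; this mismatch originates in the discrepancy between the Bombieri and Frobenius inner products on real symmetric tensors and must be tracked carefully through both the $\bbE\Vert T\Vert^2$ computation and the chaining estimate, just as in the proof of the real part of Theorem~\ref{thm:combi}.
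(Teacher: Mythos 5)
Your overall plan is in the right spirit, but there are several concrete gaps and one genuine methodological divergence from the paper.

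\textbf{Lower bound.} The route via the trivial bound~\eqref{eq:trivial_bound} (viewing a partially symmetric tensor as a general tensor of format $(n_1,\dots,n_1,\dots,n_m,\dots,n_m)$) is exactly what the paper does and is fine. However, your argument for the binomial-coefficient lower bound does not work as stated. Expanding $T$ in an orthonormal basis and applying Cauchy--Schwarz after evaluating at a single rank-one unit tensor produces an estimate of the form $|T(\Vector{x})|\leq\|T\|\cdot\|K_{\Vector{x}}\|$, i.e.\ $\|T\|_\infty\leq\|T\|$ after noting that the reproducing kernel for the Bombieri--Weyl product has norm~$1$ on the sphere; this is the \emph{trivial upper} bound on the ratio, not the lower bound you need. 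What is actually required is $\|T\|\leq C\sqrt{D}\,\|T\|_\infty$, and for this one needs the integral representation
$\|F\|^2 = \bigl(\prod_j\binom{d_j+n_j-1}{d_j}\bigr)\,\bbE_{\Vector{\fkz}^1,\dots,\Vector{\fkz}^m}|F(\Vector{\fkz}^1,\dots,\Vector{\fkz}^m)|^2 \leq \bigl(\prod_j\binom{d_j+n_j-1}{d_j}\bigr)\|F\|_\infty^2$
over independent uniform complex spheres (Propositions~\ref{prop:lowerboundcomplexpart}--\ref{prop:lowerboundrealpart}). Your explanation of the $2^{\sum d_j/2}$ factor (``the crude bound $\binom{d}{\alpha}\leq 2^d$'') is also off: in the paper this factor does not come from multinomial coefficients at all, but from Siciak's comparison~\eqref{eq:comparison} $\|f\|_{\infty,\K{C}}\leq\sqrt{2}^{\,d}\|f\|_{\infty,\K{R}}$ applied variable-block by variable-block, which converts the complex estimate into a real one.

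\textbf{Upper bound.} You missed the central structural idea of the real case. The factor $\binom{d_j+n_j/2-1}{d_j}$ in~\eqref{eq:combi2R} has nothing to do with ``the discrepancy between the Bombieri and Frobenius inner products on real symmetric tensors'' --- those two products literally coincide under the identification~\eqref{eq:BW}, so there is no discrepancy to track. The $n_j/2$ arises because in the real case the paper replaces the Kostlan ensemble by a \emph{Gaussian harmonic} ensemble on $\mathrm{H}_{d_j,n_j}$ (or a product of these), and compares the Bombieri--Weyl norm to the $L^2(\bbS^{n-1})$-norm on harmonics via Lemma~\ref{lem:comparison}, which produces a Gamma-quotient $\Gamma(d+n/2)/\Gamma(d+1)$ and hence the half-integer binomial coefficient. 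If you sample the Kostlan ensemble as you propose, you will only reproduce the weaker complex-style bound~\eqref{eq:combi2C} in the real case.

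\textbf{Method of the probabilistic upper bound.} Your Dudley-chaining / $\varepsilon$-net plan combined with Borell--TIS concentration is a legitimate alternative route, but it is not the paper's. The paper instead proves a sub-Gaussian tail bound for the ratio $\|T\|_\infty/\|T\|$ directly, by combining a ball-volume argument on the product of spheres (Theorem~\ref{theo:generalboundtheo}) with a sub-Gaussian tail estimate for orthogonal projections of Gaussians (Proposition~\ref{prop:projection}), and then extracting the minimum of the ratio from the tail bound via the trick in Remark~\ref{remark:mintrick}. The paper explicitly remarks that the net approach ``will not achieve the above bound exactly'' because one cannot cover the product of spheres by disjoint balls of the net radius; its volume argument avoids constructing a net and gives cleaner constants. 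Also, your displayed inequality $\mathcal{A}\leq\frac{\bbE\|T\|_\infty}{\sqrt{\bbE\|T\|^2}}(1+o(1))$ is not correct as written without further hypotheses; the quantity the paper controls is $\bbE\bigl[\|T\|_\infty/\|T\|\bigr]$ directly, which sidesteps the need for concentration of the denominator.
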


We can see that the bounds for the partially symmetric case are as good as the ones for the symmetric and general cases. More concretely, when $d_1=\dots= d_m=1$ (that is, in the case of general tensors of format $(n_1,\dots, n_m)$) this bound agrees with that of Theorem~\ref{thm:combig} (up to a constant), while when $m=1$ (that is, in the case of symmetric tensors) we recover those of Theorem~\ref{thm:combi}.

\subsection*{Organization} Our main objective is to prove Theorems~\ref{thm:combig} and \ref{thm:combi}, with emphasis on the latter. After introducing the preliminaries in Section~\ref{sec:preliminaries}, we prove the upper bounds in Section~\ref{sec:upper} and the lower bounds in Section~\ref{sec:lower}. Finally, in Section \ref{sec:large-d},  we prove the estimates of Theorem~\ref{thm:combi} for large $d$.

\medskip
\noindent
{\bf Acknowledgments.}\quad
We thank Erik Lundberg for pointing out a reference for \eqref{eq:comparison}. The second author is grateful to Evgenia Lagoda for moral support and Gato Suchen for suggestions regarding the proof of Theorem~\ref{theo:generalboundtheo}. 

\section{Preliminaries}\label{sec:preliminaries}

In this section we state and recall some auxiliary results and facts, as well as define our probabilistic models.

\subsection{Symmetric tensors and homogeneous polynomials}\label{sub:BROA}

The space $\Sym^d(\K{K}^n)$ of symmetric tensors is identified with the space $\PP_{d,n}\simeq \K{K}^N$, where $N:=\binom{d+n-1}{d}$, of $n$-variate homogeneous polynomials (or forms) of degree $d$: 
\begin{align}\label{eq:identification}
    T\in \Sym^d(\K{K}^n)\, \longleftrightarrow\, f\in \PP_{d,n},\quad f(\Vector{x})\ =\ \langle T,\Vector{x}\otimes\dots\otimes \Vector{x}\rangle\ =\ \sum_{i_j=1}^n t_{i_1\dots i_d} x_{i_1}\dots x_{i_d}.
\end{align}
It is convenient to write the form $f$ in the basis of monomials, $f(\Vector{x})=\sum_{\vert \alpha\vert=d} f_\alpha \Vector{x}^\alpha$, where, by symmetry, $f_\alpha = \binom{d}{\alpha} t_{i_1\dots i_d}$ and $\alpha_i$ is the number of $j=1,\dots, d$ with $i_j=i$. 
Under the identification \eqref{eq:identification}, the Frobenius product \eqref{eq:inner_product} is \emph{the Bombieri-Weyl product} of forms,
\begin{align}\label{eq:BW}
    \langle T,T'\rangle\ =\ \langle f, f'\rangle\ :=\ \sum_{\vert\alpha\vert=d} \binom{d}{\alpha}^{-1}\overline{f^{\phantom{\prime}}_\alpha} f_\alpha',\quad T\sim f,\ T\sim f'.
\end{align}
By a result of Banach \cite{Banach}, a best rank-one approximation to a symmetric tensor $T$ can be chosen among symmetric rank-one tensors $\lambda \Vector{x}\otimes \dots\otimes \Vector{x}$. In particular, the spectral norm \eqref{eq:spectral} of $T$ equals the uniform norm $\Vert f\Vert_\infty$ of the restriction of $f$ to the unit sphere $\bbS(\K{K}^n)=\{\Vector{x}\in \K{K}^n\,:\, \Vert\Vector{x}\Vert_2^2=\vert x_1\vert^2+\dots+\vert x_n\vert^2=1\}$,
\begin{align}\label{eq:Banach}
    \Vert T\Vert_\infty\ =\ \max_{\Vector{x}^j\in \bbS(\K{K}^n)} \vert \langle T,\Vector{x}^1\otimes\dots\otimes \Vector{x}^d\rangle \vert\ =\ \max_{\Vector{x}\in \bbS(\K{K}^n)} \vert \langle T,\Vector{x}\otimes\dots\otimes \Vector{x}\rangle \vert\ =:\ \Vert f\Vert_\infty.
\end{align}
The best rank-one approximation ratio of the space $\Sym^d(\K{K}^n)$ is then defined by
\begin{align*}
    \mathcal{A}(\Sym^d(\K{K}^n))\ :=\ \min_{T\in \Sym^d(\K{K}^n)} \frac{\ \ \,\Vert T\Vert_\infty}{\Vert T\Vert}\ =\ \min_{f\in \PP_{d,n}} \frac{\ \ \,\Vert f\Vert_\infty}{\Vert f\Vert},
\end{align*}
where $\Vert f\Vert:=\sqrt{\langle f,f\rangle}$ is the Bombieri-Weyl norm of $f\in \PP_{d,n}$.

A common generalization of spaces $\K{K}^\mathbf{n}$ and $\Sym^d(\K{K}^n)$ is the space $\bigotimes_{j=1}^m \Sym^{d_j}(\K{K}^{n_j})$ of partially symmetric tensors or, equivalently, the space $\PP_{\mathbf{d},\mathbf{n}}\simeq \bigotimes_{j=1}^m \PP_{d_j,n_j}$ of multi-homogeneous polynomials. An element $F$ of $\PP_{\mathbf{d},\mathbf{n}}$ can be written as $$F(\Vector{x}^1,\dots,\Vector{x}^m)\ =\ \sum_{\vert \alpha(j)\vert = d_j} F_{\Vector{\alpha}}\, (\Vector{x}^1)^{\alpha(1)}\cdots(\Vector{x}^m)^{\alpha(m)},$$ where $F_{\Vector{\alpha}}\in \K{K}$, $\Vector{\alpha}=(\alpha(1),\dots,\alpha(m))$, $\vert\alpha(j)\vert=d_j$, are the coefficients of $F$ in the basis of multi-homogeneous monomials. Then the Bombieri-Weyl product and the uniform norm can be defined via

\begin{align}
    \langle F, F'\rangle\ &:=\ \sum_{\vert \alpha(j)\vert=d_j} \binom{\Vector{d}}{\Vector{\alpha}}^{-1} F^{\phantom{\prime}}_{\Vector{\alpha}} F^\prime_{\Vector{\alpha}},\quad \binom{\Vector{d}}{\Vector{\alpha}}=\binom{d_1}{\alpha(1)}\cdots \binom{d_m}{\alpha(m)}\label{eq:product_multihom}\\
    \Vert F\Vert_\infty\ &:=\ \max\limits_{\Vector{x}^j\in \K{S}(\K{K}^{n_j})} \vert F(\Vector{x}^1,\dots,\Vector{x}^m)\vert\label{eq:uniform},
\end{align}
and \emph{the best rank-one approximation of the space $\bigotimes_{j=1}^m \Sym^{d_j}(\K{K}^{n_j})$} is defined by (see \cite{Friedland})
\begin{align}\label{eq:PSBROA}
    \mathcal{A}\left(\bigotimes_{j=1}^m \Sym^{d_j}(\K{K}^{n_j})\right)\ :=\ \min_{F\in \PP_{\mathbf{d},\mathbf{n}}} \frac{\ \ \, \Vert F\Vert_\infty}{ \Vert F\Vert},
\end{align}
where $\Vert F\Vert:=\sqrt{\langle F, F\rangle}$ is given by \eqref{eq:product_multihom}.
Moreover, it is important to keep in mind that the action via changes of variables of the product of unitary groups $U(n_1)\times \dots\times U(n_m)$ on $\mathrm{P}_{\Vector{d},\Vector{n}}$ preserves both the inner product \eqref{eq:product_multihom} and the norm \eqref{eq:uniform}.
In the real case ($\K{K}=\K{R}$) the invariance holds with respect to orthogonal changes of variables.

\subsection{Harmonic polynomials}

A form $h\in \textrm{P}_{d,n}$ is called \emph{harmonic}, if it is annihilated by \emph{the Laplace operator}, that is, $$\frac{\partial^2 h}{\partial x_1^2}+\dots+\frac{\partial^2 h}{\partial x_n^2}\ =\ 0.$$
We denote by $\mathrm{H}_{d,n}\subseteq \textrm{P}_{d,n}$ the subspace consisting of real harmonic $n$-variate forms of degree $d$. 
This space is an irreducible representation of the group $O(n)$ of orthogonal matrices, which acts on $\mathrm{H}_{d,n}$ via change of variables.
By \cite[Sect. 4.5]{Kostlan} any $O(n)$-invariant scalar product on $\mathrm{H}_{d,n}$ is a positive multiple of \emph{the $\LL$-product} defined as
\begin{align}\label{eq:L^2}
    \langle h,h'\rangle_{\LL}\ :=\ \int_{\mathbb{S}^{n-1}} h(\Vector{x}) h'(\Vector{x})\, \textrm{d}\mathbb{S}^{n-1},\quad h, h'\in \mathrm{H}_{d,n},
\end{align}
where $\textrm{d}\mathbb{S}^{n-1}$ is the Riemannian volume measure on the unit sphere $\mathbb{S}^{n-1}=\mathbb{S}(\mathbb{R}^n)$ obtained from its standard embedding in $\K{R}^n$. In particular, this is true for the Bombieri product \eqref{eq:BW} restricted to $\mathrm{H}_{d,n}$. We now relate these two scalar products to each other.
\begin{lem}\label{lem:comparison}
For any $h, h'\in \mathrm{H}_{d,n}$ we have
\begin{align*}
    \langle h,h'\rangle\ =\ \frac{2^{d-1}}{\sqrt{\pi}^{\,n}}\frac{\Gamma\left(d+\frac{n}{2}\right)}{\Gamma\left(d+1\right)}\, \langle h,h'\rangle_{\LL}.
\end{align*}
\end{lem}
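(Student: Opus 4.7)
The plan is to combine a Schur-type proportionality argument with an explicit computation on a single test harmonic. The first half is essentially already in place: the Bombieri product \eqref{eq:BW} is $U(n)$-invariant (in particular $O(n)$-invariant under orthogonal change of variables, as noted after \eqref{eq:uniform}) and the $L^2$ product \eqref{eq:L^2} is manifestly $O(n)$-invariant, so by the result quoted from \cite[Sect.~4.5]{Kostlan} the restriction of $\langle\cdot,\cdot\rangle$ to the irreducible representation $\mathrm{H}_{d,n}$ is a positive scalar multiple $c_{d,n}$ of $\langle\cdot,\cdot\rangle_{\LL}$. All that remains is to determine $c_{d,n}$ by evaluating both sides on one convenient harmonic polynomial.

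As test vector I would take
$$u(x) \;:=\; \mathrm{Re}\,(x_1+ix_2)^d \;=\; \sum_{k=0}^{\lfloor d/2\rfloor}(-1)^k\binom{d}{2k}x_1^{d-2k}x_2^{2k},$$
which is real and harmonic since $\Delta(x_1+ix_2)^d=0$. From the monomial expansion, the only nonzero coefficients of $u$ are $u_\alpha = (-1)^k\binom{d}{2k}$ at $\alpha=(d-2k,2k,0,\ldots,0)$, where $\binom{d}{\alpha}=\binom{d}{2k}$, so
$$\|u\|^2 \;=\; \sum_{k=0}^{\lfloor d/2\rfloor}\binom{d}{2k}^{-1}\binom{d}{2k}^2 \;=\; \sum_{k=0}^{\lfloor d/2\rfloor}\binom{d}{2k} \;=\; 2^{d-1}.$$

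For the $L^2$ side, let $v(x):=\mathrm{Im}\,(x_1+ix_2)^d$. The rotation $(x_1,x_2)\mapsto(-x_2,x_1)$ preserves $\textrm{d}\mathbb{S}^{n-1}$ and sends $u$ to $\pm v$, so $\|u\|_{\LL}^2=\|v\|_{\LL}^2$ and hence $2\|u\|_{\LL}^2=\int_{\mathbb{S}^{n-1}}(x_1^2+x_2^2)^d\,\textrm{d}\mathbb{S}^{n-1}$. To evaluate the last integral I would compute $\int_{\mathbb{R}^n}(x_1^2+x_2^2)^d e^{-|x|^2}\rmd x$ in two ways: passing to polar coordinates in the $(x_1,x_2)$-plane and performing a standard Gaussian integral in the remaining $n-2$ variables gives $\pi\cdot d!\cdot\pi^{(n-2)/2}=\pi^{n/2}\,d!$, while the radial decomposition $\rmd x = r^{n-1}\rmd r\,\textrm{d}\mathbb{S}^{n-1}$ with the homogeneity of degree $2d$ gives $\tfrac{1}{2}\Gamma(d+n/2)\int_{\mathbb{S}^{n-1}}(x_1^2+x_2^2)^d\,\textrm{d}\mathbb{S}^{n-1}$. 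Equating yields $\|u\|_{\LL}^2=\pi^{n/2}\,d!/\Gamma(d+n/2)$.

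Dividing produces
$$c_{d,n} \;=\; \frac{\|u\|^2}{\|u\|_{\LL}^2} \;=\; \frac{2^{d-1}\,\Gamma(d+n/2)}{\pi^{n/2}\,\Gamma(d+1)} \;=\; \frac{2^{d-1}}{\sqrt{\pi}^{\,n}}\,\frac{\Gamma(d+n/2)}{\Gamma(d+1)},$$
which is the claimed constant; by the first paragraph this proves the identity for all $h,h'\in\mathrm{H}_{d,n}$ via polarization. There is no substantive obstacle — once Schur is invoked everything reduces to bookkeeping — but the one place to be careful is the coefficient convention in \eqref{eq:BW}: the $\binom{d}{\alpha}^{-1}$ weights must be combined with the binomial coefficients coming from the expansion of $(x_1+ix_2)^d$ to collapse to the clean sum $\sum_k\binom{d}{2k}=2^{d-1}$.
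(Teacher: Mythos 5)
Your approach is essentially the paper's: reduce to a single test harmonic in $\mathrm{H}_{d,2}\subseteq\mathrm{H}_{d,n}$ via Schur, then compute both norms of $u=\mathrm{Re}\,(x_1+ix_2)^d$ (the paper's $h$). Your direct derivation of $\|u\|^2=2^{d-1}$ from the Bombieri--Weyl formula is a self-contained alternative to the paper's citation of \cite[Thm.~1.1]{AKU}, and evaluating $\int_{\mathbb{R}^n}(x_1^2+x_2^2)^d e^{-|\Vector{x}|^2}\,\rmd\Vector{x}$ two ways cleanly replaces the paper's route through the Gaussian representation \eqref{eq:L^2-gaussian}. Both norm computations check out.

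There is one slip in the justification of $\|u\|_{\LL}^2=\|v\|_{\LL}^2$. The rotation $(x_1,x_2)\mapsto(-x_2,x_1)$ multiplies $x_1+ix_2$ by $i$, hence multiplies $(x_1+ix_2)^d$ by $i^d$; for $d\equiv 0,2\pmod 4$ this sends $u$ to $\pm u$ rather than $\pm v$, so the symmetry you invoke fails as stated. The conclusion is still true, and the fix is a one-liner: rotate by the angle $\pi/(2d)$ instead, so that $(x_1+ix_2)^d\mapsto e^{i\pi/2}(x_1+ix_2)^d$ and $u\mapsto -v$ for every $d$. Equivalently, pass to polar coordinates in the $(x_1,x_2)$-plane and observe directly that $\int_0^{2\pi}\cos^2(d\theta)\,\rmd\theta=\int_0^{2\pi}\sin^2(d\theta)\,\rmd\theta$, which is in effect what the paper's integral $\int_0^{2\pi}\cos(d\theta)^2\,\rmd\theta$ does. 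With this repair your proof is correct and matches the paper's line of reasoning.
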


\begin{proof}
It is convenient to write the $\LL$-product as
\begin{align}\label{eq:L^2-gaussian}
    \langle h, h'\rangle_{\LL}\ =\ \frac{1}{\sqrt{2}^{\,2d+n-2} \Gamma\left(d+\frac{n}{2}\right)}\int_{\K{R}^n} h(\Vector{x}) h'(\Vector{x}) e^{-\frac{\Vert \Vector{x}\Vert_2^2}{2}}\,\textrm{d}\Vector{x},\quad h, h'\in \mathrm{H}_{d,n}.
\end{align}
Since orthogonally invariant scalar products on $\mathrm{H}_{d,n}$ are all proportional, we can recover the constant of proportionality by looking at $h=h'\in \mathrm{H}_{d,2}\subseteq \mathrm{H}_{d,n}$ defined by
\begin{align*}
h(x_1,x_2)\ :=\ \frac{(x_1+i x_2)^d+(x_1-i x_2)^d}{2}\ =\ r^d\cos (d\theta),\quad x_1=r\cos \theta,\ x_2=r\sin \theta.
\end{align*}
By \cite[Thm. 1.1]{AKU} we have $\Vert h\Vert^2=\langle h,h\rangle=2^{d-1}$. 
Since $h$ depends just on $x_1$ and $x_2$, the representation \eqref{eq:L^2-gaussian} implies that the $\LL$-norm of $h$ satisfies
\begin{equation*}
\begin{aligned}
    \Vert h\Vert^2_{\LL}\ &=\ \frac{\sqrt{2\pi}^{\,n-2}}{\sqrt{2}^{\,2d+n-2} \Gamma\left(d+\frac{n}{2}\right)}\int_{\K{R}^2} h(x_1,x_2)^2 e^{-\frac{x_1^2+x_2^2}{2}}\,\textrm{d}x_1\textrm{d}x_2\\ &=\ \frac{\sqrt{2\pi}^{\,n-2}\sqrt{2}^{\,2d}\Gamma\left(d+1\right)}{\sqrt{2}^{\,2d+n-2} \Gamma\left(d+\frac{n}{2}\right)} \int_0^{2\pi} \cos(d\theta)^2\,\textrm{d}\theta\ =\ \frac{\sqrt{\pi}^{\,n}}{2^{d-1}}\frac{\Gamma\left(d+1\right)}{\Gamma\left(d+\frac{n}{2}\right)} \Vert h\Vert^2,
\end{aligned}
\end{equation*}
which completes the proof.
\end{proof}

The self-duality of $(\mathrm{H}_{d,n},\langle\cdot,\cdot\rangle_{\LL})$ implies that for any $\Vector{x}\in \mathbb{S}^{n-1}$ there is a unique harmonic form $Z_{\Vector{x}}\in \mathrm{H}_{d,n}$ with
\begin{align}\label{eq:zonal}
h(\Vector{x})\ =\ \langle h, Z_{\Vector{x}}\rangle_{\LL} \quad \textrm{for all}\quad h\in \mathrm{H}_{d,n}.
\end{align}
The function $Z_{\Vector{x}}\in \mathrm{H}_{d,n}$ is called \emph{the zonal harmonic with pole $\Vector{x}$}. By \cite[Cor. 2.9]{SteinWeiss} the $\LL$-norm of $Z_{\Vector{x}}$ satisfies
\begin{equation}\label{eq:normZ}
    \Vert Z_{\Vector{x}}\Vert^2_{\LL}\ =\ Z_{\Vector{x}}(\Vector{x})\ =\ \frac{D_{d,n}}{\vert \K{S}^{n-1}\vert},
\end{equation}
where $D_{d,n}=\dim \mathrm{H}_{d,n}$ and  $\vert \K{S}^{n-1}\vert=2\sqrt{\pi}^n/\Gamma\left(\frac{n}{2}\right)$ is the volume of the unit sphere.


\subsection{Probabilistic models}\label{sub:models}

We consider real and complex random polynomials and tensors. 
Recall first that a complex random variable $t$ is called \emph{ standard complex Gaussian}, if its real and imaginary parts are independent centered Gaussians with variance $1/2$. 
Let $V$ be a (complex) inner product space. 
Then \emph{the Gaussian distribution} on $V$ is modelled via the vector $\Vector{v}=\sum_{i=1}^N \fkt_i\Vector{v}_i$, where $\fkt_1,\dots, \fkt_N$ are independent standard (complex) Gaussians and $\Vector{v}^1,\dots, \Vector{v}^N$ form an orthonormal (respectively, unitary) basis of $V$.
A remarkable property of the Gaussian distribution on $V$ is its orthogonal (respectively, unitary) invariance. 
This means that the random vector $U\Vector{v}$ has Gaussian distribution for any orthogonal (unitary) transformation $U$ on $V$.

As spaces $\K{K}^{\Vector{n}}$, $\textrm{P}_{d,n}$, $\textrm{P}_{\Vector{d},\Vector{n}}$ and $\mathrm{H}_{d,n}$ are endowed with inner products \eqref{eq:inner_product}, \eqref{eq:BW}, \eqref{eq:product_multihom} and \eqref{eq:L^2-gaussian} respectively, the Gaussian distribution is naturally defined for each of them.
For example, a real (respectively, complex) tensor $\mathcal{T}=(\fkt_{i_1\dots i_d})$ is \emph{Gaussian}, if its entries $\fkt_{i_1\dots i_d}$ are independent standard  (complex) Gaussians. 
Under the standard action of the product of unitary groups $U(\Vector{n}):=U(n_1)\times\dots\times U(n_d)$ on $\K{C}^{\Vector{n}}$ the inner product \eqref{eq:inner_product} and hence the Gaussian distribution on  $\K{C}^{\Vector{n}}$ are invariant. In particular, the inner product space $\K{R}^{\Vector{n}}$ of real tensors and the Gaussian distribution on it are invariant under the product $O(\Vector{n}):=O(n_1)\times\dots\times O(n_d)\subset U(\Vector{n})$ of orthogonal groups.

The Gaussian distribution on $\textrm{P}_{d,n}$ is also known as \emph{Kostlan distribution}. Specifically, an $n$-variate real (respectively, complex) homogeneous polynomial $\fkf(\Vector{x})=\sum_{\vert\alpha\vert=d} \fkf_{\alpha}\Vector{x}^\alpha$ of degree $d$ is called \emph{Kostlan}, if its normalized coefficients $\fkf_{\alpha}/\sqrt{\binom{d}{\alpha}}$ are independent standard (complex) Gaussians.
Similarly, a multi-homogeneous polynomial $\mathcal{F}(\Vector{x}^1,\dots, \Vector{x}^m)=\sum_{\vert\alpha(j)\vert=d_j}\mathcal{F}_{\Vector{\alpha}}(\Vector{x}^1)^{\alpha(1)}\cdots (\Vector{x}^m)^{\alpha(m)}$ of multi-degree $\Vector{d}=(d_1,\dots, d_m)$ is \emph{Kostlan} (that is, Gaussian), if its normalized coefficients $\mathcal{F}_{\Vector{\alpha}}/\sqrt{\binom{\Vector{d}}{\Vector{\alpha}}}$ are independent standard (complex) Gaussians. The notion of Kostlan (partially) symmetric tensor is then unambiguously defined through isomorphisms $\Sym^d(\K{K}^n)\simeq \textrm{P}_{d,n}$ and $\bigotimes_{j=1}^m\Sym^{d_j}(\K{K}^{n_j})\simeq \textrm{P}_{\mathbf{d},\mathbf{n}}$.
Since the inner products on spaces $\textrm{P}_{d,n}$, $\textrm{P}_{\Vector{d},\Vector{n}}$ and $\mathrm{H}_{d,n}$ are invariant with respect to orthogonal (unitary) changes of variables, so are Gaussian distributions on each of them.








\section{Upper bounds}\label{sec:upper}

In this section we prove upper bounds stated in main Theorems \ref{thm:combig} and \ref{thm:combi}. These are obtained by combining Propositions \ref{cor:bound_general}, \ref{cor:sym} and \ref{cor:harm}, which in turn follow from combining Proposition~\ref{prop:projection} with Theorem \ref{theo:generalboundtheo}. We also prove the bounds of Theorem~\ref{them:combi2} by proving Propositions~\ref{cor:part_symA} and~\ref{cor:part_symB}. In the sequel we assume that $d\geq 3$ and $n, n_1,\dots, n_d\geq 2$.

\subsection{Subgaussian estimates for the projection of a Gaussian vector}

The main result of this subsection is the following estimate of the tail probability of the ratio of norms of two vectors, one of which is the image of the other under an orthogonal projection. In technical terms, we are just proving that $\|P\fkx\|_2/\|\fkx\|_2$ is a subgaussian random variable, see~\cite{vershyninbook} for more details.

\begin{prop}\label{prop:projection}
Let $P:\bbR^N\rightarrow V$ be an orthogonal projection onto a $k$-dimensional subspace $V\subseteq \bbR^N$ and let $\fkx\in\bbR^{N}$ be a standard Gaussian vector. Then the random variable  $\frac{\|P\fkx\|_2}{\|\fkx\|_2}$ satisfies
\begin{align}\label{eq:projection}
\bbP_\fkx\left(\frac{\|P\fkx\|_2}{\|\fkx\|_2}\geq t\right)\ \leq\  3\exp\left(-\frac{N}{3e^{k-1}}t^2\right)\quad \textrm{for all}\quad t\geq 0.
\end{align}
\end{prop}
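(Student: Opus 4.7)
The plan is as follows. By the orthogonal invariance of the standard Gaussian distribution on $\bbR^N$, I may rotate coordinates and assume that $V = \operatorname{span}(e_1,\dots,e_k)$; then $X := \|P\fkx\|_2^2 \sim \chi^2_k$ and $Y := \|(I-P)\fkx\|_2^2 \sim \chi^2_{N-k}$ are independent, with $\|\fkx\|_2^2 = X+Y$, and the event $\|P\fkx\|_2 \geq t\|\fkx\|_2$ rewrites as $(1-t^2)X \geq t^2 Y$. If $Nt^2/(3e^{k-1}) \leq \ln 3$ the right-hand side of \eqref{eq:projection} already exceeds $1$ and there is nothing to prove, so I henceforth assume $Nt^2 > 3e^{k-1}\ln 3$; in particular this forces $Nt^2 > k$ for every integer $k \geq 1$.

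Next I apply the Chernoff method: for any $\lambda > 0$ with $\lambda(1-t^2) < 1/2$, Markov's inequality together with independence and the $\chi^2$ moment generating function give
$$\bbP\!\left(\tfrac{\|P\fkx\|_2}{\|\fkx\|_2} \geq t\right) \leq \bbE\!\left[e^{\lambda(1-t^2)X}\right]\bbE\!\left[e^{-\lambda t^2 Y}\right] = (1-2\lambda(1-t^2))^{-k/2}(1+2\lambda t^2)^{-(N-k)/2}.$$
Optimising in $\lambda$ (the minimiser is $\lambda^\ast = (Nt^2-k)/(2Nt^2(1-t^2)) > 0$) and then applying $(1-x)^m \leq e^{-mx}$ to the second factor collapse the right-hand side to the classical Chernoff estimate $(eNt^2/k)^{k/2} e^{-Nt^2/2}$.

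The final step converts this into the promised subgaussian shape. The elementary one-variable maximisation $u^{k/2} \leq (k/(2ce))^{k/2} e^{cu}$ (obtained by differentiating $u^{k/2}e^{-cu}$), applied with $u = Nt^2$ and $c = \tfrac{1}{2} - \tfrac{1}{3e^{k-1}}$, upgrades the Chernoff bound to $(1-2/(3e^{k-1}))^{-k/2}\, e^{-Nt^2/(3e^{k-1})}$. What remains is the purely numerical inequality $(1-2/(3e^{k-1}))^{k} \geq 1/9$: Bernoulli's inequality $(1-x)^k \geq 1-kx$ reduces it to $k/e^{k-1} \leq 4/3$, which is immediate since $k/e^{k-1}$ attains its maximum $1$ at $k=1$ and decreases for larger $k$.

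The main obstacle is this last trade-off: the polynomial Chernoff prefactor $(Nt^2/k)^{k/2}$ can only be absorbed into the exponential at the cost of degrading the Gaussian rate from $1/2$ to $1/(3e^{k-1})$, and the constant $3$ in \eqref{eq:projection} is precisely what pays for this absorption uniformly in $k$. The exponential-in-$k$ dependence is the essentially unavoidable price of a purely MGF-based argument; recovering a polynomial-in-$k$ rate would require a genuinely different tool such as spherical concentration or isoperimetry.
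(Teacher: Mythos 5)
Your proof is correct, and it takes a genuinely different route from the one in the paper. The paper observes directly that $\|P\fkx\|_2^2/\|\fkx\|_2^2$ is $\mathrm{Beta}(k/2,(N-k)/2)$, computes the $\ell$-th moment in closed form via Gamma functions, bounds it for even $\ell$ by $\left(e^{k-1}/(2N)\right)^{1/2}\sqrt{\ell}$, and then invokes the paper's Proposition~\ref{prop:subgaussiantailbound}(1), a generic moment-to-tail lemma with explicit constant $3$. You instead keep the independent $\chi^2_k$, $\chi^2_{N-k}$ pair explicit, run a Chernoff (MGF) argument with the optimal tilt $\lambda^\ast=(Nt^2-k)/(2Nt^2(1-t^2))$, arrive at the classical bound $(eNt^2/k)^{k/2}e^{-Nt^2/2}$, and then trade the polynomial prefactor for an exponential one via the elementary maximisation $u^{k/2}\le(k/(2ce))^{k/2}e^{cu}$ with $c=\tfrac12-\tfrac{1}{3e^{k-1}}$, finishing with Bernoulli. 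The paper's route is shorter once the auxiliary subgaussian lemma is available and reuses that lemma elsewhere; yours is self-contained and makes transparent exactly where the $e^{k-1}$ loss enters (absorbing $(Nt^2/k)^{k/2}$ into the exponential). Two small remarks: you should state explicitly that you may assume $t<1$, since for $t\ge1$ the probability is zero (the paper records this in a remark); and the degenerate case $k=N$ should be set aside separately, as then $Y\equiv0$ and the Chernoff split degenerates --- but the bound is trivially true there. Neither affects the substance of the argument.
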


\begin{remark}The bound Proposition~\ref{prop:projection} is meaningful only if $t\leq 1$, as for $t>1$ we  have $\bbP_\fkx\left(\|P\fkx\|_2/\|\fkx\|_2\geq t\right)=0$.\end{remark}
\begin{remark}\label{rem:projection}
Let $P: \K{C}^N\rightarrow V$ be a \emph{unitary projection} onto a $k$-dimensional (complex) subspace $V\subseteq \K{C}^N$ (that is, $\langle P(\Vector{z}),\Vector{z}-P(\Vector{z})\rangle_2=0$ for all $\Vector{z}\in\K{C}^N$). Then it corresponds to an orthogonal projection between real vector spaces, 
\begin{align*}
P^{\,\K{R}}: \K{R}^{2N}&\rightarrow V^{\mathbb{R}},\\
(\Vector{x},\Vector{y})&\mapsto \left(\Re(P(\Vector{x}+i\Vector{y})),\Im(P(\Vector{x}+i\Vector{y}))\right),
\end{align*}
where the $2k$-dimensional real subspace $V^{\mathbb{R}}\subseteq \K{R}^{2N}$ is \emph{the realification of $V\subseteq \mathbb{C}^N$}. Moreover, if $\fkz=\Vector{\fkx}+i \Vector{\fky} \in \K{C}^N$ is a standard complex Gaussian vector, then $\sqrt{2}\,(\Vector{\fkx},\Vector{\fky})\in \K{R}^{2N}$ is a standard real Gaussian vector and, by Proposition \ref{prop:projection}, we obtain for $t\geq 0$ that
\begin{align*}
    \bbP_\fkz\left(\frac{\|P\fkz\|_2}{\|\fkz\|_2}\geq t\right)\ =\ 
    \bbP_{\Vector{\fkx},\Vector{\fky}}\left(\frac{\|P^{\,\K{R}}(\Vector{\fkx},\Vector{\fky})\|_2}{\|(\Vector{\fkx},\Vector{\fky})\|_2}\geq t\right)\leq\  3\exp\left(-\frac{2N}{3e^{2k-1}}t^2\right).
\end{align*}
We need this ``complex" version of \eqref{eq:projection} to obtain bounds on the ratio of norms in case of complex tensors and forms.
\end{remark}

To prove Proposition~\ref{prop:projection}, we will need the following proposition. This proposition is a variation of \cite[Proposition~4.24]{CETC-norms}, which in turn is \cite[Proposition 2.5.2]{vershyninbook} with explicit constants. We include its proof in the appendix for the sake of completeness. 

\begin{prop}\label{prop:subgaussiantailbound}
Let $\fkt\in\bbR$ be a random variable, $C\geq 1$ and $K\geq 0$.
\begin{enumerate}
    \item If for all even integers $\ell>0$, $\left(\bbE_{\fkt}|\fkt|^\ell\right)^{\frac{1}{\ell}}\leq K\sqrt{\ell}$, then for all $t>0$, $\bbP(|\fkt|\geq t)\leq 3e^{-\frac{t^2}{6K^2}}$.
    \item If for all $t>0$,
    $\bbP(|\fkt|\geq t)\leq Ce^{-\frac{t^2}{K^2}}$,
    then for all $\ell\geq 1$,
    \begin{equation}
        \left(\mathbb{E}_{\fkt}|\fkt|^\ell\right)^{\frac{1}{\ell}}\ \leq\ K\left(\sqrt{\frac{\pi}{2}}+\sqrt{2\ln C}\right)\sqrt{\ell}.
    \end{equation}
    Moreover,
    \begin{equation}\label{eq:specialbound}
        \mathbb{E}_{\fkt}|\fkt|\ \leq\ K\sqrt{2\ln C}\left(1+\frac{1}{\ln C}\right).
    \end{equation}
\end{enumerate}
\end{prop}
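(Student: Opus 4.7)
For Part (1) the plan is Markov's inequality at an optimally chosen even power. For each even integer $\ell\ge 2$,
$$\bbP(|\fkt|\ge t)\ \le\ \frac{\bbE|\fkt|^\ell}{t^\ell}\ \le\ \left(\frac{K\sqrt{\ell}}{t}\right)^{\ell},$$
and the right-hand side is minimized (over $\ell\in\bbR$) at $\ell=t^2/(eK^2)$, where it equals $e^{-\ell/2}$. I would take $\ell$ to be the largest even integer $\le t^2/(eK^2)$; this is $\ge 2$ precisely when $t\ge K\sqrt{2e}$, and the rounding loss is at most a factor of $e$, giving $\bbP(|\fkt|\ge t)\le e\cdot e^{-t^2/(2eK^2)}$. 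Since $1/(2e)>1/6$ and $e<3$, this is at most $3e^{-t^2/(6K^2)}$. In the complementary range $t<K\sqrt{2e}$ the trivial bound $\bbP(|\fkt|\ge t)\le 1$ suffices because $3e^{-t^2/(6K^2)}\ge 3e^{-e/3}>1$.

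For Part (2) the plan is the layer-cake formula combined with a truncation trick. Assuming $C\ge 1$, set $Y:=\max(0,|\fkt|-K\sqrt{\ln C})$; the inequality $(s+K\sqrt{\ln C})^2\ge s^2+K^2\ln C$ gives
$$\bbP(Y\ge s)\ =\ \bbP\bigl(|\fkt|\ge s+K\sqrt{\ln C}\bigr)\ \le\ Ce^{-(s+K\sqrt{\ln C})^2/K^2}\ \le\ e^{-s^2/K^2},$$
so $Y$ has a \emph{tight} subgaussian tail. Layer-cake integration then yields $\bbE Y^\ell\le\int_0^\infty \ell s^{\ell-1}e^{-s^2/K^2}\,ds=K^\ell\Gamma(\ell/2+1)$, and a short Stirling-based check gives $\Gamma(\ell/2+1)\le(\pi\ell/2)^{\ell/2}$. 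Since $|\fkt|\le Y+K\sqrt{\ln C}$ pointwise, Minkowski's inequality in $L^\ell$ gives
$$\bigl(\bbE|\fkt|^\ell\bigr)^{1/\ell}\ \le\ K\sqrt{\pi\ell/2}+K\sqrt{\ln C}\ \le\ K\sqrt{\ell}\bigl(\sqrt{\pi/2}+\sqrt{2\ln C}\bigr),$$
where the last step uses $\sqrt{\ell}\ge 1$ to absorb the additive $\sqrt{\ln C}$ into $\sqrt{\ell}\sqrt{2\ln C}$. For the refined bound \eqref{eq:specialbound} I would avoid this wasteful step and instead compute $\bbE|\fkt|=\int_0^\infty\bbP(|\fkt|\ge t)\,dt$ directly, splitting at $t_0=K\sqrt{\ln C}$ and bounding the upper tail by $\int_{t_0}^\infty Ce^{-t^2/K^2}\,dt\le K/(2\sqrt{\ln C})$ via the Gaussian tail inequality $\int_a^\infty e^{-u^2}\,du\le e^{-a^2}/(2a)$.

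The main obstacle is bookkeeping the constants rather than any conceptual difficulty: in Part (1), simultaneously controlling the Markov bound at an even integer $\ell$, the rounding loss, and the switch to the trivial regime near $t\approx K\sqrt{2e}$; in Part (2), verifying the inequality $\Gamma(\ell/2+1)\le(\pi\ell/2)^{\ell/2}$ for \emph{all} $\ell\ge 1$ rather than merely asymptotically, which requires a direct check at small $\ell$.
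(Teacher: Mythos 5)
Your proof is correct but takes a genuinely different route from the paper's in both parts. For Part (1), the paper runs a Chernoff-type argument: Markov's inequality applied to $e^{\lambda^2\fkt^2}$, with $\bbE e^{\lambda^2\fkt^2}$ controlled term-by-term via the moment hypothesis, $\lambda$ set to $1/(\sqrt{6}K)$, and the constant $3$ coming from the numerical value $\sum_{p\ge 0}(p/3)^p/p!\approx 2.625$. You instead apply Markov directly to $|\fkt|^\ell$ at an optimally chosen even integer $\ell$; this is equally standard, though your assertion that ``the rounding loss is at most a factor of $e$'' should be justified (a short second-order Taylor argument around $\ell^*=t^2/(eK^2)$, using $\ell\ge 2$ and $|\ell-\ell^*|<2$, in fact shows the loss is at most $\sqrt{e}$). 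For Part (2), the paper splits the layer-cake integral at $t=K\sqrt{2\ln C}$, bounding the bulk by $1$ and using $\ln C\le t^2/(2K^2)$ on the tail to absorb the prefactor $C$; you instead shift the random variable pointwise, working with $Y=\max(0,|\fkt|-K\sqrt{\ln C})$, observing $Y$ has a tight subgaussian tail, and closing with Minkowski's inequality in $L^\ell$. These are closely parallel manoeuvres — both hinge on absorbing $C$ by a shift of order $K\sqrt{\ln C}$ — and they yield identical constants: your truncation gives a cleaner tail exponent (no factor $2$) at the price of a slightly weaker Stirling bound ($\Gamma(\ell/2+1)\le(\pi\ell/2)^{\ell/2}$ versus the paper's implicit $\le(\pi\ell/4)^{\ell/2}$). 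For the refined bound \eqref{eq:specialbound}, your split at $K\sqrt{\ln C}$ with the Gaussian tail estimate gives $\bbE|\fkt|\le K\sqrt{\ln C}+K/(2\sqrt{\ln C})$, which is in fact slightly stronger than the stated $K\sqrt{2\ln C}(1+1/\ln C)$, whereas the paper splits at $K\sqrt{2\ln C}$ and uses the comparison $t^2\ge K\sqrt{2\ln C}\,t$; both are valid.
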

\begin{remark}\label{remark:mintrick}
Let $\|\cdot\|_a$ and $\|\cdot\|_b$ be two norms on $\bbR^n$. For a Gaussian random vector $\fkx\in\bbR^N$ we have that
\begin{align*}
    \min_{\Vector{x}\neq 0}\frac{\|\Vector{x}\|_a}{\|\Vector{x}\|_b}\ \leq\ \mathbb{E}_{\fkx}\frac{\|\fkx\|_a}{\|\fkx\|_b}.
\end{align*}
Now, if $\bbP(\|\fkx\|_a/\|\fkx\|_b\geq t)\leq Ce^{-t^2/K^2}$, we obtain, by Proposition~\ref{prop:subgaussiantailbound}, that 
\begin{align}\label{eq:estimate_1945}
    \min_{\Vector{x}\neq 0}\frac{\|\Vector{x}\|_a}{\|\Vector{x}\|_b}\ \leq\ K\sqrt{2\ln C}\left(1+\frac{1}{\ln C}\right).
    \end{align}
    However, we can give a better bound on the minimum of the norm ratio by noticing that
\begin{align*}
    \min_{\Vector{x}\neq 0}\frac{\|\Vector{x}\|_a}{\|\Vector{x}\|_b}\ =\ \inf\left\{t> 0\,\Bigg|\, \bbP\left(\frac{\|\fkx\|_a}{\|\fkx\|_b}\geq t\right)<1\right\}.
    \end{align*}
Since $\bbP(\|\fkx\|_a/\|\fkx\|_b\geq t)\leq e^{\,\ln C-t^2/K^2}$, we obtain
\begin{align}\label{eq:trick}
\min_{\Vector{x}\neq 0}\frac{\|\Vector{x}\|_a}{\|\Vector{x}\|_b}\ \leq\ K\sqrt{\ln C},
\end{align}
which improves the constants in the estimate \eqref{eq:estimate_1945}.
\end{remark}

\begin{proof}[Proof of Proposition~\ref{prop:projection}]
Recall that if $\fky$ and $\fkz$ are independent random variables with $\chi^2$-distribution with, respectively, $k$ and $N-k$ degrees of freedom, then $\frac{\fky}{\fky+\fkz}$ has a $\beta$-distribution with parameters $k/2$ and $(N-k)/2$. In this way, $\frac{\|P\fkx\|_2^2}{\|\fkx\|_2^2}$ has a $\beta$-distribution with parameters $k/2$ and $(N-k)/2$, and so its density  reads
\[
\frac{1}{\beta\left(\frac{k}{2},\frac{N-k}{2}\right)}s^{\frac{k}{2}-1}(1-s)^{\frac{N-k}{2}-1},\quad s\in [0,1].
\]
Doing a change of variables $s= t^2$, we obtain that
\begin{equation*}
 \frac{2}{\beta\left(\frac{k}{2},\frac{N-k}{2}\right)}t^{k-1}(1-t^2)^{\frac{N-k}{2}-1},\quad t\in [0,1],
\end{equation*}
is the density of $\|P\fkx\|_2/\|\fkx\|_2$.
Then a straightforward computation implies that for all $\ell>0$
\begin{equation}\label{eq:moment_beta}
\left(\bbE_\fkx\frac{\|P\fkx\|^\ell_2}{\|\fkx\|^\ell_2}\right)^{\frac{1}{\ell}}\ =\ \left(\frac{\Gamma\left(\frac{k+\ell}{2}\right)}{\Gamma\left(\frac{k}{2}\right)}\frac{\Gamma\left(\frac{N}{2}\right)}{\Gamma\left(\frac{N+\ell}{2}\right)}\right)^{\frac{1}{\ell}}.
\end{equation}
We fix $\ell$ to be a positive even integer. To bound \eqref{eq:moment_beta} we treat each fraction separately. For the first fraction,
\begin{align*}
    \frac{\Gamma\left(\frac{k+\ell}{2}\right)}{\Gamma\left(\frac{k}{2}\right)}\ &=\ \prod_{i=1}^{\frac{\ell}{2}}\left(\frac{k+\ell}{2}-i\right)&(\Gamma(x)=(x-1)\Gamma(x-1))\\
    &\leq\ \left(\frac{k-1}{2}+\frac{\ell}{4}\right)^{\frac{\ell}{2}}&\text{(AM-GM inequality)}\\
    &=\ \left(1+\frac{2(k-1)}{\ell}\right)^{\frac{\ell}{2}}\left(\frac{\ell}{4}\right)^{\frac{\ell}{2}}\leq e^{k-1}\left(\frac{\ell}{4}\right)^{\frac{\ell}{2}}.
\end{align*}
And for the second fraction,
\begin{align*}
    \frac{\Gamma\left(\frac{N}{2}\right)}{\Gamma\left(\frac{N+\ell}{2}\right)}&\ =\  \prod_{i=0}^{\frac{\ell}{2}-1}\frac{1}{\frac{N}{2}+i}\leq \left(\frac{2}{N}\right)^{\frac{\ell}{2}}&(\Gamma(x)=(x-1)\Gamma(x-1))
\end{align*}
We observe that in both bounds it is essential that $\ell/2$ is an integer. Putting the obtained bounds together, we have that for all even integers $\ell>0$
\begin{equation*}
\left(\bbE_\fkx\frac{\|P\fkx\|^\ell_2}{\|\fkx\|^\ell_2}\right)^{\frac{1}{\ell}}\ \leq\ e^{\frac{k-1}{\ell}}\sqrt{\frac{\ell}{4}} \, \sqrt{\frac{2}{N}}\ \leq\ \sqrt{\frac{e^{k-1}}{2N}}\sqrt{\ell}.
\end{equation*}
Hence by Proposition~\ref{prop:subgaussiantailbound} the desired claim follows.
\end{proof}

\subsection{Estimates for a random Lipschitz map on a product of spheres}

We consider the \emph{sum-geodesic distance} on $\prod_{k=1}^d\bbS^{n_k-1}$, given by
\[
\dist_\bbS(\Vector{x},\Vector{y})\ :=\ \sum_{k=1}^d\dist_\bbS(\Vector{x}_k,\Vector{y}_k)\ =\ \sum_{k=1}^d\arccos\langle\Vector{x}_k,\Vector{y}_k\rangle_2
\]
for $\Vector{x},\Vector{y}\in \prod_{k=1}^d\bbS^{n_k-1}$. The following theorem is the main tool for our estimates.

\begin{theo}\label{theo:generalboundtheo}
Let $n_1,\ldots,n_d\geq 2$ and $\fkF:\prod_{k=1}^d \bbS^{n_k-1}\rightarrow [0,\infty)$ be a random Lipschitz function whose Lipschitz constant, $\mathrm{Lip}(\fkF)$, satisfies for some $L\geq 1$,
\begin{equation}\label{eq:lipboundaasumption}
    \mathrm{Lip}(\fkF)\ \leq\ L\max_{\Vector{x}\in \prod_{k=1}^d\bbS^{n_k-1}}\fkF(\Vector{x}).
\end{equation}
Then for all $t>0$, 
\begin{align}\label{eq:bound_Pmax}
    \bbP_\fkF\left(\max_{\Vector{x}\in \prod_{k=1}^d\bbS^{n_k-1}}\fkF(\Vector{x})\geq t\right)\ \leq\ C(L,d;n_1,\ldots,n_d)\max_{\Vector{x}\in \prod_{k=1}^d\bbS^{n_k-1}} \bbP_\fkF\left(\fkF(\Vector{x})\geq \frac{t}{2}\right)
    \end{align}
where $C(L,d;n_1,\ldots,n_d)$ satisfies
\begin{equation*}
    \ln C(L,d;n_1,\ldots,n_d)\ =\ (2+\ln(dL))\left(\sum_{k=1}^d n_k\right)-\frac{1}{2}\sum_{k=1}^d\ln(n_k-1)-d\ln(dL).
\end{equation*}
If  $\fkF$ is invariant under orthogonal changes of variables on $\K{S}^{n_1-1},\dots,\K{S}^{n_d-1}$, the probability $\K{P}_{\fkF}\left(\fkF(\Vector{x})\geq \frac{t}{2}\right)$ does not depend on the point $\Vector{x}\in \prod_{k=1}^d\bbS^{n_k-1}$ and hence we can omit maximum in \eqref{eq:bound_Pmax}.
\end{theo}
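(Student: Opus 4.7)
The plan is a standard $\varepsilon$-net argument: replace the supremum of $\fkF$ over the continuous product of spheres by a maximum over a finite subset, use the Lipschitz hypothesis to control the discretization error, and then apply a union bound.

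First, I would set $\varepsilon := 1/(2L)$ and, for each $k=1,\dots,d$, pick a geodesic $\varepsilon/d$-net $\mathcal{N}_k \subseteq \bbS^{n_k-1}$. Since $\dist_\bbS$ on $\prod_k \bbS^{n_k-1}$ is the sum of the $d$ individual geodesic distances, the product $\mathcal{N} := \mathcal{N}_1 \times \cdots \times \mathcal{N}_d$ is automatically an $\varepsilon$-net for $\dist_\bbS$. Fixing a realization of $\fkF$ with $M := \max_{\Vector{x}} \fkF(\Vector{x})$ attained at $\Vector{x}^*$ and choosing $\Vector{y}^* \in \mathcal{N}$ with $\dist_\bbS(\Vector{x}^*, \Vector{y}^*) \leq \varepsilon$, the Lipschitz hypothesis \eqref{eq:lipboundaasumption} gives
$$M - \fkF(\Vector{y}^*) \;\leq\; \mathrm{Lip}(\fkF)\cdot \varepsilon \;\leq\; LM \cdot \varepsilon \;=\; \tfrac{M}{2},$$
so $\fkF(\Vector{y}^*) \geq M/2$. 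Pointwise in $\fkF$, the event $\{M \geq t\}$ therefore forces $\{\max_{\Vector{y}\in \mathcal{N}} \fkF(\Vector{y}) \geq t/2\}$, and a union bound over $\mathcal{N}$ yields
$$\bbP_\fkF(M \geq t) \;\leq\; |\mathcal{N}|\cdot \max_{\Vector{y}\in \mathcal{N}} \bbP_\fkF\bigl(\fkF(\Vector{y}) \geq t/2\bigr).$$

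The remaining task is the counting bound $|\mathcal{N}| \leq C(L,d;n_1,\ldots,n_d)$. I would derive it from the standard volume-ratio estimate on the sphere: covering $\bbS^{n_k-1}$ by geodesic balls of radius $\varepsilon/(2d) = 1/(4dL)$, the ratio of total sphere volume to ball volume is of the order $e^{2n_k}(dL)^{n_k-1}/\sqrt{n_k-1}$, and multiplying these bounds across $k=1,\dots,d$ reproduces exactly the stated $\ln C = (2+\ln(dL))\sum_k n_k - \tfrac{1}{2}\sum_k \ln(n_k-1) - d\ln(dL)$. The invariance statement is immediate: if the distribution of $\fkF$ is stable under $O(n_1) \times \cdots \times O(n_d)$ acting factor-wise, then any two points on the product of spheres are related by such a transformation, so $\fkF(\Vector{x}) \stackrel{d}{=} \fkF(\Vector{y})$ for all $\Vector{x},\Vector{y}$, and the inner maximum can be dropped.

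The only delicate step is the sharp covering-number estimate: the Lipschitz discretization and union bound are mechanical, but matching the exact form of $C(L,d;n_1,\ldots,n_d)$ — in particular reproducing the $-\tfrac{1}{2}\sum_k \ln(n_k-1)$ correction rather than the crude $(n_k-1)$-dependence one gets from the standard $(1+2/r)^{n_k}$ bound — requires carefully tracking the constants in the volume of a small spherical cap.
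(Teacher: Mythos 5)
Your net-and-union-bound argument is a genuinely different route from the paper's, and the paper itself acknowledges it in the remark following the theorem's proof. The paper does not construct a net: it observes that the event $\{\max\fkF\geq t\}$ forces $\fkF\geq t/2$ on the sum-distance ball $B_\bbS(\fkx_\ast,(2L)^{-1})$ around the maximizer, so that the conditional probability of a uniformly random $\fkx$ satisfying $\fkF(\fkx)\geq t/2$ is at least $\vol B_\bbS/\prod_k\vol\bbS^{n_k-1}$; Markov's inequality together with Tonelli's theorem then produce the stated bound with constant $\prod_k\vol\bbS^{n_k-1}/\vol B_\bbS(\mathbf{e}_1,(2L)^{-1})$, which is strictly smaller than the size of any covering by balls of that radius. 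Your approach trades this tighter intermediate bound for the more mechanical structure of a union bound, but it loses twice: no covering can fill the sum-distance ball without overlap, and moreover your product net uses factor-wise balls of radius $(2dL)^{-1}$, whereas the sum-distance ball of radius $(2L)^{-1}$ is far larger than the product $\prod_k B((2dL)^{-1})$ --- the simplex $\{\sum_k t_k\leq 1\}$ has volume $1/d!$ versus the cube's $1/d^d$, a factor of roughly $e^d$, and the paper's change-of-variables computation exploits that gain directly.

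The one genuine gap is the claim that the per-factor covering number is ``of the order $e^{2n_k}(dL)^{n_k-1}/\sqrt{n_k-1}$'' and that multiplying these ``reproduces exactly the stated $\ln C$''. The product does match $\ln C$, but the covering estimate itself is not verified, and it is not the true order: the packing argument bounds the covering number of $\bbS^{n_k-1}$ by geodesic balls of radius $(2dL)^{-1}$ by $\vol\bbS^{n_k-1}/\vol B_\bbS((4dL)^{-1})$, which behaves like $\sqrt{n_k-1}\,(4dL)^{n_k-1}$ --- that is, the $\sqrt{n_k-1}$ appears in the numerator, not the denominator. Your claimed expression is nevertheless an upper bound for $n_k\geq 2$, because the extra $e^{2n_k}$ over $4^{n_k-1}$ absorbs the $(n_k-1)$ discrepancy, but establishing this requires lower-bounding $\int_0^r\sin^{n_k-2}\theta\,\mathrm{d}\theta$ and controlling $\vol\bbS^{n_k-1}/\vol\bbS^{n_k-2}$ with exactly the kind of care the paper applies to its simplex integral --- precisely the ``delicate step'' you flag but do not carry out. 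As an outline the proposal is sound and would yield a correct proof; as written, it defers the only substantive computation.
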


\begin{remark}\label{rem:boundsC}
Constant $\ln C(L,d;n_1,\dots, n_d)$ satisfies the following bounds. 
First,
\begin{align*}\ln C(L,d;n_1,\ldots,n_d)\ \geq\ 3d+(1+\ln(dL))\sum_{k=1}^d (n_k-1)\ \geq\ d+\sum_{k=1}^d n_k,
\end{align*}
since $n-\frac{1}{2}\ln(n-1)\geq 2$ for $n\geq 2$ and $1+\ln(dL)\geq 1$. On the other hand,
\begin{align*}\ln C(L,d;n_1,\ldots,n_d)\ \leq\ \left(1+\frac{2}{\ln(dL)}\right)\ln(dL)\left(\sum_{k=1}^d n_k\right)-\ln(dL),\end{align*}
after estimating negative terms.
\end{remark}

\begin{proof}[Proof of Theorem~\ref{theo:generalboundtheo}]
If $\max_{\Vector{x}\in \prod_{k=1}^d\bbS^{n_k-1}}\fkF(\Vector{x})\geq t$, then, by the Lipschitz property and our assumption ~\eqref{eq:lipboundaasumption} on the Lipschitz constant, we have that
\begin{align*}
\left\{\Vector{x}\in\prod_{k=1}^d\bbS^{n_k-1}\,\Bigg|\, \fkF(\Vector{x})\geq \frac{t}{2}\right\}\ \supseteq\ B_\bbS\left(\fkx_\ast,(2L)^{-1}\right),
\end{align*}
where $\fkx_\ast\in \prod_{k=1}^d \bbS^{n_k-1}$ is the maximizer of $\fkF$ and $B_\bbS$ is the ball with respect to the sum-geodesic distance on $\prod_{k=1}^d\bbS^{n_k-1}$ that is centered at $\fkx_\ast$ and has radius $(2L)^{-1}$. In this way, $\max_{\Vector{x}\in \prod_{k=1}^d\bbS^{n_k-1}}\fkF(\Vector{x})\geq t$ implies that for a uniformly sampled $\fkx\in \prod_{k=1}^d\bbS^{n_k-1}$ we obtain
\begin{align*}
\bbP_{\fkx\in \prod_{k=1}^d\bbS^{n_k-1}}\left(\fkF(\fkx)\geq \frac{t}{2}\right)\ \geq\ \frac{\vol\,B_\bbS\left(\fkx_\ast,(2L)^{-1}\right)}{\prod_{k=1}^d\vol\bbS^{n_k-1}}\ =\ \frac{\vol\,B_\bbS\left(\mathbf{e}_1,(2L)^{-1}\right)}{\prod_{k=1}^d\vol\bbS^{n_k-1}},
\end{align*}
where $\mathbf{e}_1:=(\Vector{e}_1,\ldots,\Vector{e}_1)\in\prod_{k=1}^d\bbS^{n_k-1}$. The last equality follows from the fact that the volume of $B_\bbS\left(\Vector{x},r\right)$ is independent of $\Vector{x}\in \prod_{k=1}^d\bbS^{n_k-1}$. Therefore, we have
\begin{align*}
    \bbP_\fkF&\left(\max_{\Vector{x}\in \prod_{k=1}^d\bbS^{n_k-1}}\fkF(\Vector{x})\geq t\right)\\&\leq\ \bbP_\fkF\left(\bbP_{\fkx\in \prod_{k=1}^d\bbS^{n_k-1}}\left(\fkF(\fkx)\geq \frac{t}{2}\right)\geq \frac{\vol\,B_\bbS\left(\mathbf{e}_1,(2L)^{-1}\right)}{\prod_{k=1}^d\vol\bbS^{n_k-1}}\right)&\text{(Implication bound)}\\
    &\leq\ \frac{\prod_{k=1}^d\vol\bbS^{n_k-1}}{\vol\,B_\bbS\left(\mathbf{e}_1,(2L)^{-1}\right)}\bbE_{\fkF} \left[\bbP_{\fkx\in \prod_{k=1}^d\bbS^{n_k-1}}\left(\fkF(\fkx)\geq \frac{t}{2}\right)\right]&\text{(Markov's inequality)}\\
    &=\ \frac{\prod_{k=1}^d\vol\bbS^{n_k-1}}{\vol\,B_\bbS\left(\mathbf{e}_1,(2L)^{-1}\right)}\bbE_{\fkx\in \prod_{k=1}^d\bbS^{n_k-1}}\left[ \bbP_\fkF\left(\fkF(\fkx)\geq \frac{t}{2}\right)\right]&\text{(Tonelli's theorem)}\\
    &\leq\ \frac{\prod_{k=1}^d\vol\bbS^{n_k-1}}{\vol\,B_\bbS\left(\mathbf{e}_1,(2L)^{-1}\right)}\max_{\Vector{x}\in \prod_{k=1}^d\bbS^{n_k-1}} \bbP_\fkF\left(\fkF(\Vector{x})\geq \frac{t}{2}\right).
\end{align*}
It remains to bound $\frac{\prod_{k=1}^d\vol\bbS^{n_k-1}}{\vol\,B_\bbS\left(\mathbf{e}_1,(2L)^{-1}\right)}$. Consider the map
\begin{align*}
    \yuproj:\prod_{k=1}^d\bbR^{n_k-1}&\rightarrow \prod_{k=1}^d\bbS^{n_k-1}\\
    \begin{pmatrix}\Vector{z}_1\\\vdots\\\Vector{z}_d\end{pmatrix}&\mapsto \begin{pmatrix}
    \frac{1}{\sqrt{1+\|\Vector{z}_1\|_2^2}}\begin{pmatrix}1\\\Vector{z}_1\end{pmatrix}\\\vdots\\\frac{1}{\sqrt{1+\|\Vector{z}_d\|_2^2}}\begin{pmatrix}1\\\Vector{z}_d\end{pmatrix}\end{pmatrix}.
\end{align*}
Then, by the result in the appendix \ref{subsec:yuprojjacobian}, we have that
\begin{equation}\label{eq:yuprojjacobian}
    \left|\det\diff_{\Vector{z}}\yuproj\right|=\prod_{k=1}^d\left(1+\|\Vector{z}_k\|^2_2\right)^{-\frac{n_k}{2}},\quad \Vector{z}=(\Vector{z}_1,\dots, \Vector{z}_d)\in \prod_{k=1}^d\bbR^{n_k-1}.
\end{equation}
Now, we do a sequence of changes of variables as follows:
\begin{align*}
    \vol\, B_\bbS\left(\mathbf{e}_1,R\right)&\ \\ 
    &=\ \int\limits_{\sum_{k=1}^d\arctan\|\Vector{z}_k\|\leq R}\,\prod_{k=1}^d\left(1+\|\Vector{z}_k\|^2_2\right)^{-\frac{n_k}{2}}\,\mathrm{d}\Vector{z}_1\cdots\mathrm{d}\Vector{z}_d\\
    &=\ \prod_{k=1}^d\vol\,\bbS^{n_k-2}\int\limits_{\substack{\rho_1,\dots, \rho_d\geq 0,\\\sum_{k=1}^d\arctan\rho_k\leq R}}\,\prod_{k=1}^d\rho_k^{n_k-2}\left(1+\rho_k^2\right)^{-\frac{n_k}{2}}\,\mathrm{d}\rho_1\cdots\mathrm{d}\rho_d\\
    &=\ \prod_{k=1}^d\vol\,\bbS^{n_k-2}\int\limits_{\substack{\phi_1,\dots, \phi_d\geq 0,\\\sum_{k=1}^d\phi_k\leq R}}\,\prod_{k=1}^d(\sin \phi_k)^{n_k-2}\,\mathrm{d}\phi_1\cdots\mathrm{d}\phi_d
    \end{align*}
    \begin{align*}
    &=\ \left(\frac{R}{\eta}\right)^{\sum_{k=1}^dn_k-d}\prod_{k=1}^d\vol\,\bbS^{n_k-2}\int\limits_{\substack{t_1,\dots, t_d\geq 0,\\\sum_{k=1}^d\arcsin(R\eta^{-1}t_k)\leq R}}\,\prod_{k=1}^d\frac{t_k^{n_k-2}}{\sqrt{1-R^2\eta^{-2}t_k^2}}\,\mathrm{d}t_1\cdots\mathrm{d}t_d,
\end{align*}
where we use $\Vector{x}=\yuproj(\Vector{z})$ in the first line, $z_k=\rho_k\theta_k$ with $\rho_k\geq 0$ and $\theta_k\in\bbS^{n_k-2}$ in the second line, $\rho_k=\tan\phi_k$ in the third line, and $\sin\phi_k=R\eta^{-1} t_k$ in the fourth line.

Observe that the domain of integration of the last integral is contained in $[0,\eta]^d$. In this way, we have for each $k$, $\arcsin(R\eta^{-1}t_k)\leq R\eta^{-1}t_k/\sqrt{1-R^2}$, and so the domain of integration contains $\sum_{k=1}^d t_k\leq \eta\sqrt{1-R^2}$. Since $1\big/\sqrt{1-R^2\eta^{-2}t^2_k}\geq 1$, we obtain the following lower bound:
\[
\vol\,B_\bbS\left(\mathbf{e}_1,R\right)\ \geq\ \left(R\sqrt{1-R^2}\right)^{\sum_{k=1}^dn_k-d}\prod_{k=1}^d\vol\,\bbS^{n_k-2}\int\limits_{\substack{t_1,\dots, t_d\geq 0,\\\sum_{k=1}^dt_k\leq 1}}\,\prod_{k=1}^d t_k^{n_k-2}\,\mathrm{d}t_1\cdots\mathrm{d}t_d
\]
where we took $\eta^{-1}=\sqrt{1-R^2}$. Therefore, taking $R=\frac{1}{2L}$, we only have to show that
\begin{equation}\label{eq:boundCprimitive}
    C(L,d;n_1,\ldots,n_d)\ \geq\ \frac{\left(\frac{2L}{\sqrt{1-\frac{1}{4L^2}}}\right)^{\sum_{k=1}^dn_k-d}\prod_{k=1}^d\frac{\vol\,\bbS^{n_k-1}}{\vol\,\bbS^{n_k-2}}}{\int\limits_{\substack{t_1,\dots, t_d\geq 0,\\\sum_{k=1}^dt_k\leq 1}}\,\prod_{k=1}^d t_k^{n_k-2}\,\mathrm{d}t_1\cdots\mathrm{d}t_d}
\end{equation}
for the chosen value of $C(L,d;n_1,\ldots,n_d)$. We prove this bound, by bounding the three parts of the logarithm of the right-hand side separately.

First, we have that $\ln\frac{1}{\sqrt{1-\frac{1}{4L^2}}}\leq \frac{1}{6L^2}$ since $L\geq 1$. Thus
\begin{equation}\label{eq:boundfactor1}
    \ln \left(\frac{2L}{\sqrt{1-\frac{1}{4L^2}}}\right)^{\sum_{k=1}^dn_k-d}\ \leq\  \left(\sum_{k=1}^dn_k-d\right)\left(\ln 2+\frac{1}{6L^2}+\ln L\right).
\end{equation}

Second, for $n_1,\dots, n_d\geq 2$, Lemma $2.25$ from \cite{conditionbook} gives
\[
\frac{\vol\bbS^{n_k-1}}{\vol\,\bbS^{n_k-2}}\ \leq\ \frac{\sqrt{2\pi n_k}}{n_k-1}\ \leq\  \frac{2\sqrt{\pi}}{\sqrt{n_k-1}}
\]
and hence
\begin{equation}\label{eq:boundfactor2}
\ln\prod_{k=1}^d\frac{\vol\,\bbS^{n_k-1}}{\vol\,\bbS^{n_k-2}}\ \leq\ d\ln(2\sqrt{\pi})-\frac{1}{2}\sum_{k=1}^d\ln(n_k-1).
\end{equation}

Third, we have that
\begin{align*}
    \ln&\left(\ \int\limits_{\substack{t_1,\dots, t_d\geq 0,\\\sum_{k=1}^dt_k\leq 1}}\,\prod_{k=1}^d t_k^{n_k-2}\,\mathrm{d}t_1\cdots\mathrm{d}t_d\right)\
    =\ -\ln d! + \ln\left(d!\int\limits_{\substack{t_1,\dots, t_d\geq 0,\\ \sum_{k=1}^dt_k\leq 1}}\,\prod_{k=1}^d t_k^{n_k-2}\,\mathrm{d}t_1\cdots\mathrm{d}t_d\right)& 
      \end{align*}    
\begin{align*}
       &\geq\  -\ln d!+\sum_{k=1}^dd!(n_k-2)\int\limits_{\substack{t_1,\dots, t_d\geq 0,\\\sum_{k=1}^dt_k\leq 1}}\,\ln t_k\,\mathrm{d}t_1\cdots\mathrm{d}t_d&\text{(Jensen's inequality)}\\
    &=\ -\ln d!+\sum_{k=1}^dd(n_k-2)\int_{0}^1\,(1-t_k)^{d-1}\ln t_k\,\mathrm{d}t_k&\text{(Integrate over }\sum_{i\neq k}t_i\leq 1-t_k\text{)}\\
&=\ -\ln d!-\left(d\int_{0}^1\,\sum_{l=1}^{\infty}\frac{(1-t)^{d+l-1}}{l}\,\mathrm{d}t\right)\left(\sum_{k=1}^d n_k-2d\right)&\left(\ln t=-\sum_{l=1}^\infty \frac{(1-t)^l}{l}\right)\\
    &=\ -\ln d!-\left(d\sum_{l=1}^{\infty}\int_{0}^1\,\frac{(1-t)^{d+l-1}}{l}\,\mathrm{d}t\right)\left(\sum_{k=1}^d n_k-2d\right)&\text{(Monotone convergence)}\\
    &=\ -\ln d!-\left(\sum_{l=1}^{\infty}\frac{d}{l(d+l)}\right)\left(\sum_{k=1}^d n_k-2d\right)\\
    &=\ -\ln d!-\left(\sum_{k=1}^d\frac{1}{k}\right)\left(\sum_{k=1}^d n_k-2d\right).&\left(\frac{d}{l(d+l)}=\frac{1}{l}-\frac{1}{d+l}\right).
\end{align*}
Now, Stirling's bound~\cite[Eq.~2.14]{conditionbook} gives
\[
\ln d!\ \leq\ \frac{1}{2}\ln(2\pi)+d\ln d+\frac{1}{2}\ln d-d+\frac{1}{12d}.
\]
Formula $(3)$ in \cite[1.2.7]{knuth_vol1}, whose proof is contained in \cite[1.2.11.2]{knuth_vol1}, yields
\[
\sum_{k=1}^d\frac{1}{k}\ \leq\ \ln d+\gamma+\frac{1}{2d},
\]
where $\gamma:=\lim_{n\to\infty}\left(-\ln n+\sum_{k=1}^n 1/k\right)=0.57721566\ldots$ is the Euler–Mascheroni constant. Thus, we have that
\begin{equation}\label{eq:boundfactor3}
    -\ln\left(\ \int\limits_{\substack{t_1,\dots, t_d\geq 0,\\\sum_{k=1}^dt_k\leq 1}}\,\prod_{k=1}^d t_k^{n_k-2}\,\mathrm{d}t_1\cdots\mathrm{d}t_d\right)\ \leq\ \left(\sum_{k=1}^dn_k\right)\left(\ln d+\gamma+\frac{1}{2d}\right)-d\ln d-d,
\end{equation}
since $\frac{1}{2}\ln(2\pi)-1<0$ and $\frac{1}{2}\ln d+\frac{1}{12d}-2\gamma d<0$.

Finally, applying the logarithm to~\eqref{eq:boundCprimitive} and using the inequalities \eqref{eq:boundfactor1}, \eqref{eq:boundfactor2} and \eqref{eq:boundfactor3}, we obtain the desired bound after noticing that $\gamma+\frac{1}{2d}+\ln 2+\frac{1}{6L^2}\leq 2$.
\end{proof}
\begin{remark}The above theorem can be also proven by means of nets, see \cite[Section~5.1]{vershyninbook}. In both proofs, we wound have to bound the quantity
\[
\frac{\prod_{k=1}^d\vol\bbS^{n_k-1}}{\vol\,B_\bbS\left(\mathbf{e}_1,(2L)^{-1}\right)}.
\]
However, no net will achieve the above bound exactly, since we cannot cover $\prod_{k=1}^d\bbS^{n_k-1}$ with balls of radius $(2L)^{-1}$ without overlapping them. Our argument avoids, on the one hand, dealing with the construction of the net, and, on the other hand, gives better bounds.
\end{remark}

\subsection{Upper bound for general tensors}

We now apply Theorem \ref{theo:generalboundtheo} to give bounds on \eqref{eq:BROA}.

\begin{prop}\label{cor:bound_general}
Let $\mathcal{T}\in \K{K}^\mathbf{n}$ be a Gaussian tensor.
Then
\begin{align}\label{eq:Ebound}
    \K{E}_{\mathcal{T}}\frac{\Vert \mathcal{T}\Vert_{\infty}}{\Vert \mathcal{T}\Vert\ \ }\ \leq\ 9\left(1+\frac{1}{\ln d}+\frac{2}{d+\sum_j n_j}\right) \frac{\sqrt{d\ln d}}{\sqrt{\min_i \prod_{j\neq i} n_j}}.
\end{align}
Moreover,
\begin{align*}
   \frac{1}{\sqrt{\min_{i}\prod_{j\neq i} n_j}}\ \leq\  \mathcal{A}\left(\K{K}^\mathbf{n}\right)\ \leq\ 2\sqrt{3e}\sqrt{1+\frac{2}{\ln d}} \frac{\sqrt{d\ln d}}{\sqrt{\min_i \prod_{j\neq i} n_j}}.
\end{align*}
\end{prop}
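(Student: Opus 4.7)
The lower bound $\mathcal{A}(\mathbb{K}^{\mathbf{n}}) \geq 1/\sqrt{\min_i\prod_{j\neq i} n_j}$ is just the trivial bound \eqref{eq:trivial_bound}, so the real work is the upper bound. The strategy is to apply Theorem~\ref{theo:generalboundtheo} to the random Lipschitz function
\begin{equation*}
\mathfrak{F}(\mathbf{x})\ :=\ \frac{|\langle \mathcal{T}, \Vector{x}^1\otimes\cdots\otimes\Vector{x}^d\rangle|}{\|\mathcal{T}\|},\qquad \mathbf{x}=(\Vector{x}^1,\dots,\Vector{x}^d)\in\prod_{k=1}^d\mathbb{S}(\mathbb{K}^{n_k}),
\end{equation*}
whose maximum over $\mathbf{x}$ equals $\|\mathcal{T}\|_\infty/\|\mathcal{T}\|$, and to fuel it with a pointwise subgaussian tail coming from Proposition~\ref{prop:projection}.

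For the \emph{pointwise tail at a fixed} $\mathbf{x}$, observe that since each $\Vector{x}^k$ is a unit vector, the tensor $\Vector{x}^1\otimes\cdots\otimes\Vector{x}^d$ has unit Frobenius norm, so $|\langle \mathcal{T},\Vector{x}^1\otimes\cdots\otimes\Vector{x}^d\rangle|$ is exactly $\|P_{\mathbf{x}}\mathcal{T}\|$ where $P_{\mathbf{x}}$ is the orthogonal (resp.\ unitary) projection of $\mathbb{K}^{\mathbf{n}}\cong\mathbb{K}^N$ onto the one-dimensional line it spans, with $N=\prod_j n_j$ and $k=1$. Proposition~\ref{prop:projection} (for $\mathbb{K}=\mathbb{R}$) and Remark~\ref{rem:projection} (for $\mathbb{K}=\mathbb{C}$) then give $\mathbb{P}_\mathcal{T}(\mathfrak{F}(\mathbf{x})\geq t/2)\leq 3\exp(-N t^2/c)$, with $c=12$ in the real case and $c=6e$ in the complex case, for every $\mathbf{x}$.

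For the \emph{Lipschitz check} in Theorem~\ref{theo:generalboundtheo}, I would use the telescoping identity
\begin{equation*}
\Vector{x}^1\otimes\cdots\otimes\Vector{x}^d-\Vector{y}^1\otimes\cdots\otimes\Vector{y}^d\ =\ \sum_{k=1}^d\Vector{y}^1\otimes\cdots\otimes\Vector{y}^{k-1}\otimes(\Vector{x}^k-\Vector{y}^k)\otimes\Vector{x}^{k+1}\otimes\cdots\otimes\Vector{x}^d,
\end{equation*}
together with the definition of the spectral norm and the chord-vs-arc inequality $\|\Vector{x}^k-\Vector{y}^k\|_2\leq\dist_{\mathbb{S}}(\Vector{x}^k,\Vector{y}^k)$, to obtain
\begin{equation*}
|\langle \mathcal{T},\Vector{x}^1\otimes\cdots\otimes\Vector{x}^d\rangle-\langle \mathcal{T},\Vector{y}^1\otimes\cdots\otimes\Vector{y}^d\rangle|\ \leq\ \|\mathcal{T}\|_\infty\sum_{k=1}^d \dist_{\mathbb{S}}(\Vector{x}^k,\Vector{y}^k).
\end{equation*}
Dividing by $\|\mathcal{T}\|$ shows $\mathrm{Lip}(\mathfrak{F})\leq\max\mathfrak{F}$, so $L=1$ satisfies \eqref{eq:lipboundaasumption}. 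Because the Gaussian distribution on $\mathbb{K}^{\mathbf{n}}$ is invariant under the product $O(\mathbf{n})$ (resp.\ $U(\mathbf{n})$), which acts transitively on $\prod_k\mathbb{S}(\mathbb{K}^{n_k})$ through a norm-preserving action on $\mathcal{T}$, the pointwise probability above does not depend on $\mathbf{x}$, so the maximum in \eqref{eq:bound_Pmax} is dropped.

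Finally, I would \emph{assemble} the pieces. Theorem~\ref{theo:generalboundtheo} with $L=1$ yields
\begin{equation*}
\mathbb{P}_\mathcal{T}\left(\frac{\|\mathcal{T}\|_\infty}{\|\mathcal{T}\|}\geq t\right)\ \leq\ 3\,C(1,d;n_1,\ldots,n_d)\,\exp\left(-\frac{N}{c}t^2\right).
\end{equation*}
Applying \eqref{eq:trick} of Remark~\ref{remark:mintrick} to the ratio of the Frobenius and spectral norms gives $\mathcal{A}(\mathbb{K}^{\mathbf{n}})\leq\sqrt{c\ln(3C(1,d;\ldots))/N}$, while applying \eqref{eq:specialbound} of Proposition~\ref{prop:subgaussiantailbound} instead produces the bound on $\mathbb{E}_\mathcal{T}\|\mathcal{T}\|_\infty/\|\mathcal{T}\|$. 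The announced shape is obtained by combining the estimate $\ln(3C(1,d;n_1,\ldots,n_d))\leq(1+2/\ln d)\ln d\sum_k n_k$ from Remark~\ref{rem:boundsC} with the elementary inequality
\begin{equation*}
\frac{\sum_{k=1}^d n_k}{N}\ =\ \sum_{k=1}^d\frac{1}{\prod_{j\neq k} n_j}\ \leq\ \frac{d}{\min_i\prod_{j\neq i}n_j},
\end{equation*}
and choosing the worse of the two constants $c=12,\,6e$ arising from the real/complex dichotomy.

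The main obstacle is the Lipschitz step: it is crucial that the telescoping argument yields $L=1$ (and not, say, $L=d$), because this is what keeps the combinatorial factor $\ln(dL)$ in $\ln C$ down to $\ln d$ and prevents an extra $\sqrt{\ln d}$-factor from creeping into the final bound. Once $L=1$ is in hand, the remaining work is careful bookkeeping of constants.
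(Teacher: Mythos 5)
Your proposal matches the paper's proof essentially step for step: the same Lipschitz function $\mathfrak{F}(\mathbf{x})=|\langle\mathcal{T},\Vector{x}^1\otimes\cdots\otimes\Vector{x}^d\rangle|/\|\mathcal{T}\|$, the same telescoping argument yielding $\mathrm{Lip}(\mathfrak{F})\leq\max\mathfrak{F}$ so that $L=1$, the same pointwise subgaussian tail via the rank-one projection of Proposition~\ref{prop:projection} and Remark~\ref{rem:projection} with $c=12$ (real) and $c=6e$ (complex), and the same assembly via Theorem~\ref{theo:generalboundtheo}, Remark~\ref{rem:boundsC}, Remark~\ref{remark:mintrick} and Proposition~\ref{prop:subgaussiantailbound}. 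The constants and the reduction $\sum_k n_k/N\leq d/\min_i\prod_{j\neq i}n_j$ are exactly as in the paper, so your plan is correct and not a different route.
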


\begin{proof}
Let us consider a random Lipschitz function
\begin{equation}\label{eq:Lipschitz}
    \begin{aligned}
\fkF: \K{S}(\K{K}^{n_1})\times\dots\times \K{S}(\K{K}^{n_d})\ &\rightarrow\ [0,\infty),\\
\Vector{x} = (\Vector{x}^1,\dots,\Vector{x}^d)\ &\mapsto\  \frac{\vert \langle \mathcal{T},\Vector{x}^1\otimes\dots\otimes \Vector{x}^d\rangle \vert}{\Vert \mathcal{T}\Vert}.
\end{aligned}
\end{equation}
Its Lipschitz constant satisfies $\mathrm{Lip}(\fkF)\leq \max_{\Vector{x}^j\in \K{S}(\K{K}^{n_j})} \fkF(\Vector{x})=\frac{\Vert \mathcal{T}\Vert_\infty}{\Vert \mathcal{T}\Vert\ \ }$, since 
\begin{equation}\label{eq:Lip_bound}
    \begin{aligned}
\vert \fkF(\Vector{x})-\fkF(\Vector{y})\vert\ &\leq\ \frac{\vert \langle\mathcal{T},\Vector{x}^1\otimes\dots\otimes \Vector{x}^d-\Vector{y}^1\otimes \dots\otimes\Vector{y}^d\rangle \vert}{\Vert \mathcal{T}\Vert}\\
&\leq\ \sum_{j=1}^d \frac{\vert\langle \mathcal{T},\Vector{x}^1\otimes \dots\otimes\Vector{x}^{j-1}\otimes (\Vector{x}^j-\Vector{y}^j)\otimes \Vector{y}^{j+1}\otimes\dots\otimes \Vector{y}^d\rangle\vert}{\Vert\mathcal T\Vert}\\
&\leq\ \left(\max_{\Vector{z}^j\in \K{S}(\K{K}^{n_j})} \fkF(\Vector{z})\right)\sum_{j=1}^d \Vert \Vector{x}^j-\Vector{y}^j\Vert_2\ \leq\ \left(\max_{\Vector{z}^j\in \K{S}(\K{K}^{n_j})} \fkF(\Vector{z})\right)\mathrm{dist}_\K{S}(\Vector{x},\Vector{y})
    \end{aligned}
\end{equation}
holds for any $\Vector{x}, \Vector{y}\in \K{S}(\K{K}^{n_1})\times\dots\times \K{S}(\K{K}^{n_d})$.
By Theorem \ref{theo:generalboundtheo}, we have for all $t>0$,
\begin{equation}\label{eq:tail_general}
    \K{P}_\mathcal{T}\left( \frac{\Vert\mathcal{T}\Vert_\infty}{\Vert \mathcal{T}\Vert\ \ } \geq t \right)\ \leq\ C(d,\mathbf{n})\  \K{P}_\mathcal{T}\left(\fkF(\Vector{e}_1,\dots, \Vector{e}_1)\geq \frac{t}{2}\right)
    ,
\end{equation}
where $\ln C(d,\mathbf{n})= (2+\ln d)\left(\sum_{j=1}^d kn_j\right) - \frac{1}{2}\sum_{j=1}^d \ln(kn_j-1)-d\ln d$ with $k$ being either  $1$ ($\K{K}=\K{R}$) or $2$ ($\K{K}=\K{C}$). Since $\fkF(\Vector{e}_1,\dots, \Vector{e}_1)=\Vert\langle \mathcal{T},\Vector{e}_1\otimes \dots\otimes \Vector{e}_1\rangle\, \Vector{e}_1\otimes \dots\otimes \Vector{e}_1\Vert/\Vert \mathcal{T}\Vert $, Proposition \ref{prop:projection} and Remark \ref{rem:projection} applied to the orthogonal (unitary, if $\K{K}=\K{C}$) projection $T\mapsto \langle T,\Vector{e}_1\otimes \dots\otimes \Vector{e}_1\rangle\, \Vector{e}_1\otimes \dots\otimes \Vector{e}_1$ yield
\begin{align*}
    \K{P}_\mathcal{T}\left(\fkF(\Vector{e}_1,\dots, \Vector{e}_1) \geq \frac{t}{2}\right)\ \leq\ 3\exp\left(-\frac{kn_1\cdots n_d}{3e^{k-1}} \frac{t^2}{4}\right),
\end{align*}
which combined with \eqref{eq:tail_general} gives
\begin{align}\label{eq:tail_general2}
    \K{P}_\mathcal{T}\left( \frac{\Vert\mathcal{T}\Vert_\infty}{\Vert \mathcal{T}\Vert\ \ } \geq t \right)\ \leq\ 3C(d,\mathbf{n}) \exp\left(-\frac{kn_1\cdots n_d}{12e^{k-1}} \,t^2\right).
\end{align}
By Proposition \ref{prop:subgaussiantailbound} we finally have
\begin{equation*}
\begin{aligned}
\K{E}_\mathcal{T} \frac{\Vert \mathcal{T}\Vert_\infty}{\Vert \mathcal{T}\Vert\ \ }
&\leq\ \frac{2\sqrt{6} e^{\frac{k-1}{2}}}{\sqrt{kn_1\cdots n_d}}\sqrt{\ln 3C(d,\mathbf{n})}\left(1+\frac{1}{\ln 3C(d,\mathbf{n})}\right) \\
&\leq\ 2\sqrt{6}e^{\frac{k-1}{2}} \sqrt{\ln d}\frac{\sqrt{\sum_{j=1}^d n_j}}{\sqrt{\prod_{j=1}^d n_j}}\sqrt{1+\frac{2}{\ln d}}\left(1+\frac{1}{d+\sum_j kn_j}\right)\\
 &\leq\ 2\sqrt{6}e^{\frac{k-1}{2}}\left(1+\frac{1}{\ln d}+\frac{2}{d+\sum_j kn_j}\right)\frac{\sqrt{d\ln d}}{\sqrt{\min_i \prod_{j\neq i} n_j}},
\end{aligned}
\end{equation*}
where in the second line we use the estimates of Remark~\ref{rem:boundsC}. The bound for \eqref{eq:BROA} follows by applying the trick in Remark~\ref{remark:mintrick} to \eqref{eq:tail_general2}.
\end{proof}

\begin{remark}
When all dimensions $n_1=\dots=n_d=n$ are equal, one has
\begin{align*}
   \frac{1}{n^{\frac{d-1}{2}}}\ \leq\  \mathcal{A}\left(\K{K}^\mathbf{n}\right)\ \leq\  \K{E}_{\mathcal{T}} \frac{\Vert \mathcal{T}\Vert_\infty}{\Vert \mathcal{T}\Vert\ \ }\ \leq\ 9\left(1+\frac{1}{\ln d}+\frac{2}{d(1+n)}\right)\frac{\sqrt{d\ln d}}{n^{\frac{d-1}{2}}}. 
\end{align*}

\end{remark}

\subsection{Upper bounds for symmetric tensors}

We now apply Theorem \ref{theo:generalboundtheo} to give bounds on \eqref{eq:SBROA}. We formulate everything in the equivalent terms of homogeneous polynomials, see Section \ref{sub:BROA}. First, we consider random Kostlan forms and then random harmonic forms, which deliver better bounds for \eqref{eq:SBROA} in the real case.

\begin{prop}\label{cor:sym}
Let $\fkf\in \PP_{d,n}$ be a Kostlan form. Then
\begin{align}\label{eq:Ebound_sym}
    \K{E}_\fkf \frac{\Vert \fkf\Vert_\infty}{\Vert \fkf\Vert\ \ }\ \leq\ 9 \left(1+\frac{1}{\ln d}+\frac{1}{1+n}\right)\sqrt{n\ln d}\binom{d+n-1}{d}^{-\frac{1}{2}}.
\end{align}
Moreover,
\begin{align*}
     \mathcal{A}(\Sym^d(\K{K}^n))\ \leq\ 2\sqrt{3e}\sqrt{1+\frac{2}{\ln d}} \sqrt{n\ln d}\binom{d+n-1}{d}^{-\frac{1}{2}}. 
\end{align*}
\end{prop}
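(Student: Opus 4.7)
The plan is to mirror the strategy used in the proof of Proposition~\ref{cor:bound_general}, with the key adaptation that for a symmetric form the relevant Lipschitz function lives on a single sphere $\bbS(\K{K}^n)$ and its Lipschitz constant scales with the degree $d$ rather than with the tensor order.

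I would begin by introducing the random function
\[
\fkF:\bbS(\K{K}^n)\to[0,\infty),\qquad \fkF(\Vector{x}):=\frac{|\fkf(\Vector{x})|}{\|\fkf\|},
\]
and establishing the Lipschitz bound $\mathrm{Lip}(\fkF)\leq d\cdot\max_{\Vector{x}\in\bbS(\K{K}^n)}\fkF(\Vector{x})$. This follows from Banach's identity~\eqref{eq:Banach}, $\|\fkf\|_\infty=\|\mathcal{T}\|_\infty$, via the telescoping decomposition
\[
\fkf(\Vector{x})-\fkf(\Vector{y})\ =\ \sum_{j=1}^d\langle\mathcal{T},\Vector{x}^{\otimes(j-1)}\otimes(\Vector{x}-\Vector{y})\otimes\Vector{y}^{\otimes(d-j)}\rangle,
\]
exactly as in~\eqref{eq:Lip_bound}, where $\mathcal{T}\in\Sym^d(\K{K}^n)$ is the tensor associated to $\fkf$. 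So the parameter $L$ in Theorem~\ref{theo:generalboundtheo} is taken to be $L=d$.

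Next, I apply Theorem~\ref{theo:generalboundtheo} with a single sphere (the ``$d$'' of the theorem equal to $1$), dimension $n_1=kn$ where $k=1$ for $\K{K}=\K{R}$ and $k=2$ for $\K{K}=\K{C}$, and $L=d$. This reduces the supremum tail to a pointwise tail at a single point $\Vector{x}_0$, at the cost of a prefactor $C(d,1;kn)$ controlled by Remark~\ref{rem:boundsC} as $\ln C\leq (1+2/\ln d)(\ln d) kn-\ln d$. For the pointwise bound I exploit the Bombieri--Weyl reproducing kernel: the form $\langle\Vector{x}_0,\cdot\rangle^d$ has Bombieri--Weyl norm $\|\Vector{x}_0\|^d=1$ and satisfies $\fkf(\Vector{x}_0)=\langle\fkf,\langle\Vector{x}_0,\cdot\rangle^d\rangle$, so $|\fkf(\Vector{x}_0)|/\|\fkf\|=\|P\fkf\|/\|\fkf\|$ for an orthogonal (respectively unitary) projection $P$ onto a $1$-dimensional subspace of $\PP_{d,n}\cong\K{K}^N$ with $N=\binom{d+n-1}{d}$. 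Applying Proposition~\ref{prop:projection} (with Remark~\ref{rem:projection} in the complex case) yields a subgaussian pointwise tail $3\exp\!\left(-kNt^2/(12e^{k-1})\right)$.

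Combining the two ingredients gives a subgaussian tail bound for $\|\fkf\|_\infty/\|\fkf\|$ of the form $3C\exp\!\left(-kNt^2/(12e^{k-1})\right)$. The expectation bound~\eqref{eq:Ebound_sym} follows from Proposition~\ref{prop:subgaussiantailbound} part~2, estimating $\sqrt{\ln(3C)}$ via Remark~\ref{rem:boundsC} and absorbing the factor $e^{(k-1)/2}\leq\sqrt{e}$ together with the small correction $1+1/\ln(3C)$ into the constant $9$; the lower-order correction $1/(1+n)$ is produced by the negative $-\ln d$ term in $\ln C$ after dividing by $kn+1\asymp 1+n$. The bound on $\mathcal{A}(\Sym^d(\K{K}^n))$ follows instead from the sharper ``minimum trick'' of Remark~\ref{remark:mintrick} (formula~\eqref{eq:trick}), which replaces $\sqrt{2\ln C}(1+1/\ln C)$ by $\sqrt{\ln C}$ and yields the constant $\sqrt{12e^{k-1}}\leq 2\sqrt{3e}$ together with the factor $\sqrt{1+2/\ln d}$. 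The principal technical hurdle I anticipate is twofold: first, checking that the telescoping Lipschitz argument gives constant exactly $d\,\max\fkF$ (rather than a slightly worse constant coming from the difference between chordal and geodesic distance on $\bbS(\K{K}^n)$); and second, tracking the lower-order terms in Remark~\ref{rem:boundsC} tightly enough to recover the explicit corrections $1/\ln d$ and $1/(1+n)$ appearing in~\eqref{eq:Ebound_sym}.
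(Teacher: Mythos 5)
Your proposal is correct and follows essentially the same route as the paper's proof: restrict the Lipschitz function to a single sphere of real dimension $kn$ with $L=d$, apply Theorem~\ref{theo:generalboundtheo}, reduce the pointwise tail to a rank-one orthogonal projection handled by Proposition~\ref{prop:projection} (with Remark~\ref{rem:projection} over $\K{C}$), and finish with Proposition~\ref{prop:subgaussiantailbound} and the minimum trick of Remark~\ref{remark:mintrick}. The only cosmetic difference is that you phrase the rank-one projection via the Bombieri--Weyl reproducing kernel $\langle\Vector{x}_0,\cdot\rangle^d$ at a generic $\Vector{x}_0$, while the paper specializes to $\Vector{x}_0=\Vector{e}_1$ where it becomes the coordinate projection onto $\tilde\fkf_{(d,0,\dots,0)}$ — the same computation in different clothes.
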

\begin{prop}\label{cor:harm}
Let $\fkh\in \mathrm{H}_{d,n}$ be a Gaussian harmonic form. Then
\begin{equation}\label{eq:harm_bound}
\K{E}_\fkh\frac{\Vert \fkh\Vert_\infty}{\Vert \fkh\Vert\phantom{_\infty}}\ \leq\ 2\sqrt{6}\left(1+\frac{1}{\ln d}+\frac{1}{n+1}\right)\frac{\sqrt{n\ln d}}{2^{\frac{d}{2}}}\binom{d+\frac{n}{2}-1}{d}^{-\frac{1}{2}}.
\end{equation}
Moreover,
\begin{equation}
\mathcal{A}(\Sym^d(\K{R}^n))\ \leq\ 2\sqrt{3}\sqrt{1+\frac{2}{\ln d}}\,\frac{\sqrt{n\ln d}}{2^{\frac{d}{2}}}\binom{d+\frac{n}{2}-1}{d}^{-\frac{1}{2}}.
\end{equation}
\end{prop}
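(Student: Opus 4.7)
The plan is to imitate the strategy of Proposition~\ref{cor:sym}, working on the single sphere $\bbS^{n-1}$ (rather than a product), but with a Gaussian \emph{harmonic} form. The gain comes from the fact that the pointwise evaluation $\fkh\mapsto \fkh(\Vector{x})$ on $\mathrm{H}_{d,n}$ is a one-dimensional projection of $\fkh$ (through the zonal harmonic $Z_{\Vector{x}}$), and the normalisation of that projection is governed by the identity $c\,|\bbS^{n-1}| = 2^d\binom{d+n/2-1}{d}$, where $c$ is the constant from Lemma~\ref{lem:comparison}. This is precisely the factor that will appear inside the exponential of the tail bound.

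First I would define the random Lipschitz function $\fkF:\bbS^{n-1}\to[0,\infty)$ by $\fkF(\Vector{x}):=|\fkh(\Vector{x})|/\|\fkh\|$. By Banach's theorem (applied to the symmetric tensor associated with $\fkh\in\mathrm{H}_{d,n}\subseteq \PP_{d,n}$), $\max_{\Vector{x}\in\bbS^{n-1}}\fkF(\Vector{x})=\|\fkh\|_\infty/\|\fkh\|$. Bounding the Lipschitz constant of $\fkF$ uses the homogeneity of $\fkh$: for any $\Vector{x}\in\bbS^{n-1}$ and any unit tangent direction $\Vector{v}$, the directional derivative $D_{\Vector{v}}\fkh(\Vector{x}) = d\,\langle T, \Vector{x}^{\otimes (d-1)}\otimes \Vector{v}\rangle$ satisfies $|D_{\Vector{v}}\fkh(\Vector{x})|\leq d\,\|\fkh\|_\infty$, so $\mathrm{Lip}(\fkF)\leq d\,\max\fkF$, allowing us to take $L=d$ in Theorem~\ref{theo:generalboundtheo} with a single sphere ($d'=1$, $n_1=n$). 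This produces the constant $\ln C(d,1;n) = (2+\ln d)n - \tfrac12\ln(n-1) - \ln d$, estimated by Remark~\ref{rem:boundsC}.

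Next I would obtain the pointwise subgaussian tail. Using \eqref{eq:zonal} and Lemma~\ref{lem:comparison}, write $\fkh(\Vector{x}) = \langle \fkh, Z_{\Vector{x}}\rangle_{L^2} = c^{-1}\langle \fkh, Z_{\Vector{x}}\rangle$ with $c=\tfrac{2^{d-1}}{\sqrt{\pi}^n}\tfrac{\Gamma(d+n/2)}{\Gamma(d+1)}$. Hence $|\fkh(\Vector{x})|/\|\fkh\|=(\|Z_{\Vector{x}}\|/c)\cdot \|\Pi\fkh\|/\|\fkh\|$, where $\Pi$ is the Bombieri--Weyl orthogonal projection onto $\bbR Z_{\Vector{x}}\subseteq \mathrm{H}_{d,n}$. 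Since $\fkh$ is standard Gaussian in $\bbR^{D_{d,n}}$, Proposition~\ref{prop:projection} with $k=1$, $N=D_{d,n}$ gives $\bbP(\|\Pi\fkh\|/\|\fkh\|\geq u)\leq 3e^{-D_{d,n}u^2/3}$. Combining with \eqref{eq:normZ} to get $\|Z_{\Vector{x}}\|^2 = c\,D_{d,n}/|\bbS^{n-1}|$ and the identity $c\,|\bbS^{n-1}| = 2^d\binom{d+n/2-1}{d}$ (from the formula $|\bbS^{n-1}|=2\sqrt{\pi}^n/\Gamma(n/2)$), yields
\[
\bbP_{\fkh}\!\left(\fkF(\Vector{x})\geq s\right)\ \leq\ 3\exp\!\left(-\frac{2^d\binom{d+n/2-1}{d}}{3}\,s^2\right),
\]
uniformly in $\Vector{x}$ (as it must be by orthogonal invariance of the Gaussian on $\mathrm{H}_{d,n}$).

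Finally, applying Theorem~\ref{theo:generalboundtheo} with the above pointwise tail (at $s=t/2$) gives an overall subgaussian tail for $\|\fkh\|_\infty/\|\fkh\|$ with parameter $K=\sqrt{12/(2^d\binom{d+n/2-1}{d})}$ and multiplicative constant $3C(d,1;n)$; the bound \eqref{eq:harm_bound} follows from Proposition~\ref{prop:subgaussiantailbound}(2) together with the estimates of Remark~\ref{rem:boundsC}, which supply both the main factor $\sqrt{n\ln d}$ (upper bound on $\ln C$) and the correction $1+\tfrac{1}{n+1}$ (lower bound $\ln C\geq n+1$). The improved constant for $\mathcal{A}(\Sym^d(\bbR^n))$ then comes from the trick \eqref{eq:trick} in Remark~\ref{remark:mintrick}, using $\mathrm{H}_{d,n}\subseteq \PP_{d,n}$ so that a positive-probability bound on $\|\fkh\|_\infty/\|\fkh\|$ is also an upper bound for the minimum over all of $\PP_{d,n}$. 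The main obstacle is purely computational: keeping track of the cancellation between the two instances of $D_{d,n}$ and the factor $c$ so that the exponential constant collapses to $2^d\binom{d+n/2-1}{d}$, which is exactly what is needed to match \eqref{eq:harm_bound}.
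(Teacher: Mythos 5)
Your proposal is correct and follows essentially the same route as the paper's proof: restrict the random Lipschitz functional to the single sphere, bound its Lipschitz constant by $d$ times its maximum, apply Theorem~\ref{theo:generalboundtheo} with one sphere and $L=d$, realize the pointwise evaluation as a rank-one orthogonal projection via the zonal harmonic $Z_{\Vector{x}}$, feed the resulting subgaussian tail through Proposition~\ref{prop:subgaussiantailbound} and Remark~\ref{remark:mintrick}. Your explicit computation of the constant collapse $c\,|\bbS^{n-1}|=2^d\binom{d+n/2-1}{d}$ matches the paper's derivation of $K$, and rederiving the Lipschitz bound via directional derivatives (instead of quoting \eqref{eq:Lip_sym}) is an equivalent variant.
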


\begin{proof}[Proof of Proposition~\ref{cor:sym}]
Let us consider a random Lipschitz function
\begin{equation*}
    \begin{aligned}
    \fkF: \K{S}(\K{K}^n)\ &\rightarrow\ [0,\infty),\\
\Vector{x}\ &\mapsto\  \frac{\vert \fkf(\Vector{x}) \vert}{\Vert \fkf \Vert}.
    \end{aligned}
\end{equation*}
Note that it is the restriction of the function \eqref{eq:Lipschitz} to the diagonally embedded sphere $\K{S}(\K{K}^n)\hookrightarrow \K{S}(\K{K}^n)\times \dots\times \K{S}(\K{K}^n)$.
By \eqref{eq:Lip_bound} and \eqref{eq:Banach} the Lipschitz constant of $\fkF$ satisfies \begin{align}\label{eq:Lip_sym}
    \mathrm{Lip}(\fkF)\ \leq\ d \max_{\Vector{x}\in \K{S}(\K{K}^n)} \fkF(\Vector{x})\ =\ d\frac{\Vert \fkf\Vert_\infty}{\Vert \fkf\Vert\ \ }.
    \end{align}
By Theorem \ref{theo:generalboundtheo} we have for all $t>0$,

\begin{align}\label{eq:tail_symmetric}
    \K{P}_\fkf\left(\frac{\,\Vert \fkf\Vert_\infty}{\Vert \fkf\Vert\ \ }\geq t\right)\ \leq\ C(d,n)\, \K{P}_\fkf\left( \fkF(\Vector{e}_1)\geq \frac{t}{2} \right)\ =\ C(d,n)\, \K{P}_\fkf\left( \frac{\vert \fkf_{(d,0,\dots,0)}\vert}{\Vert \fkf\Vert }\geq \frac{t}{2} \right),
\end{align}
where $\ln C(d,n)= (2+\ln d) kn-\frac{1}{2} \ln(kn-1)-\ln d$ with $k$ being either $1$ ($\K{K}=\K{R}$) or 2 ($\K{K}=\K{C}$).
Since $\fkf$ is a Kostlan form, that is, $\fkf_\alpha=\sqrt{\binom{d}{\alpha}}\tilde \fkf_\alpha$, where variables $\tilde \fkf_\alpha$ are independent standard (complex) Gaussians, its Bombieri-Weyl norm satisfies
\begin{align*}
    \Vert \fkf\Vert^2\ =\ \sum_{\vert\alpha\vert=d} \binom{d}{\alpha}^{-1} \vert \fkf_\alpha\vert^2\ =\ \sum_{\vert\alpha\vert=d} \vert \tilde \fkf_\alpha\vert^2.
\end{align*}
We apply Proposition \ref{prop:projection} and Remark \ref{rem:projection} to the projection $\pi_1$ onto the first coordinate axis in $\mathbb{K}^N$ and, using $\Vert \pi_1(\fkf)\Vert_2 = \vert \tilde \fkf_{(d,0,\dots,0)}\vert=\vert \fkf_{(d,0,\dots,0)}\vert$, obtain that
\begin{align*}
    \K{P}_\fkf \left(\frac{\vert \fkf_{(d,0,\dots,0)}\vert}{\Vert \fkf\Vert}\geq \frac{t}{2}\right)\ \leq\ 3\exp\left(-\frac{kN}{3e^{k-1}} \frac{t^2}{4}\right),
\end{align*}
which combined with \eqref{eq:tail_symmetric} gives
\begin{align}\label{eq:tail_symmetric2}
     \K{P}_\fkf\left(\frac{\,\Vert \fkf\Vert_\infty}{\Vert \fkf\Vert\ \ }\geq t\right)\ \leq\ 3C(d,n) \exp\left(-\frac{kN}{12e^{k-1}}\, t^2\right).
\end{align}

This, together with inequality \eqref{eq:tail_symmetric} and Proposition \ref{prop:subgaussiantailbound}, finally implies that

\begin{equation*}
\begin{aligned}
\K{E}_\fkf\frac{\Vert \fkf\Vert_\infty}{\Vert \fkf\Vert\ \ \,}\ &\leq\ \frac{2\sqrt{6} e^{\frac{k-1}{2}}}{\sqrt{kN}}\sqrt{\ln 3C(d,n)}\left(1+\frac{1}{\ln 3C(d,n)}\right)\\
&\leq\ 2\sqrt{6} e^{\frac{k-1}{2}}\left(1+\frac{1}{\ln d}\right)\left(1+\frac{1}{2(1+n)}\right)\sqrt{n\ln d}\binom{n+d-1}{d}^{-\frac{1}{2}}\\
&\leq\  2\sqrt{6} e^{\frac{k-1}{2}}\left(1+\frac{1}{\ln d}+\frac{1}{1+n}\right)\sqrt{n\ln d}\binom{n+d-1}{d}^{-\frac{1}{2}}\\
&\leq\ 2\sqrt{6} e^{\frac{k-1}{2}}\left(1+\frac{1}{\ln d}+\frac{1}{1+n}\right)
\frac{\sqrt{d!\ln d}}{n^{\frac{d-1}{2}}},
\end{aligned}
\end{equation*}
where in the second line we use bounds from Remark~\ref{rem:boundsC} with $d\geq 3$. 
The bound for \eqref{eq:SBROA} follows by applying the trick in Remark~\ref{remark:mintrick} to \eqref{eq:tail_symmetric2}.
\end{proof}

\begin{proof}[Proof of Proposition~\ref{cor:harm}]
Given a Gaussian harmonic form $\fkh\in \mathrm{H}_{d,n}$, we consider a random Lipschitz function
\begin{equation*}
\begin{aligned}
    \fkF: \K{S}^{n-1}&\rightarrow [0,\infty)],\\
    \Vector{x} & \mapsto \frac{\vert \fkh(\Vector{x})\vert}{\Vert \fkh\Vert}.
\end{aligned}
\end{equation*}
The bound \eqref{eq:Lip_sym} implies
that the Lipschitz constant of $\fkF$ satisfies
\begin{align*}
    \textrm{Lip}(\fkF)\ \leq\ d\max_{\Vector{x}\in \K{S}^{n-1}}\fkF(\Vector{x})\ =\ d\frac{\Vert \fkh\Vert_{\infty}}{\Vert \fkh\Vert\phantom{_\infty}}.
\end{align*}
By Theorem \ref{theo:generalboundtheo} we have for all $t>0$ that
\begin{align}\label{eq:first_harm}
    \mathbb{P}_{\fkh}\left(\frac{\Vert \fkh\Vert_\infty}{\Vert \fkh\Vert\phantom{_\infty}}\geq t\right)\ \leq\ C(d,n) \max_{\Vector{x}\in \K{S}^{n-1}} \mathbb{P}_\fkh \left(\fkF(\Vector{x})\geq \frac{t}{2}\right)
\end{align}
with $\ln C(d,n)=(2+\ln d)n-\frac{1}{2}\ln(n-1)-\ln d$.
Since the inner product \eqref{eq:L^2} of two harmonic forms is invariant under orthogonal changes of variables and since, by Lemma \ref{lem:comparison}, it is proportional to \eqref{eq:BW}, the random variables $\fkF(\Vector{x})$  and $\fkF(\Vector{x}')$ have the same distribution  for any $\Vector{x}, \Vector{x}'\in \K{S}^{n-1}$. 
In particular, in the right-hand side of \eqref{eq:first_harm} we can drop the maximum and consider any point $\Vector{x}\in \K{S}^{n-1}$.

The evaluation of $h\in \mathrm{H}_{d,n}$ at a point $\Vector{x}\in \mathbb{S}^{n-1}$ does not anymore correspond to an orthogonal projection in $(\mathrm{H}_{d,n},\langle\cdot,\cdot\rangle_{\LL})$, as it was in the case of Kostlan polynomials. However, the formula \eqref{eq:zonal} implies that it is given by taking inner product with $Z_{\Vector{x}}$. So, 
\begin{equation*}
\begin{aligned}
P: \mathrm{H}_{d,n} &\rightarrow \K{R}Z_{\Vector{x}},\\
h & \mapsto \frac{h(\Vector{x})}{\Vert Z_{\Vector{x}}\Vert_{\LL}} \frac{Z_{\Vector{x}}}{\Vert Z_{\Vector{x}}\Vert_{\LL}},
\end{aligned}
\end{equation*}
is an orthogonal projection on the line through $Z_{\Vector{x}}$ and Proposition \ref{prop:projection} gives
\begin{align*}
    \K{P}_\fkh\left(\frac{\vert \fkh(\Vector{x})\vert}{\Vert Z_{\Vector{x}}\Vert_{\LL} \Vert \fkh\Vert_{\LL}}\geq t\right)\ \leq\ 3 \exp\left(-\frac{D_{d,n}}{3} t^2\right),\quad t\geq 0.
\end{align*}
This and Lemma \ref{lem:comparison} imply that for all $t\geq 0$,
\begin{equation*}
\begin{aligned}
    \K{P}_\fkh\left(\fkF(\Vector{x})\geq \frac{t}{2}\right)\ &=\ \K{P}_\fkh\left(\frac{\vert \fkh(\Vector{x})\vert}{\Vert Z_{\Vector{x}} \Vert_{\LL} \Vert \fkh\Vert_{\LL}}\geq \frac{t \sqrt{\Gamma\left( d+\frac{n}{2}\right)}2^{\frac{d-1}{2}}}{2\Vert Z_{\Vector{x}}\Vert_{\LL} \sqrt{\Gamma(d+1)} \pi^{\frac{n}{4}}}\right)\\
    &\leq\ 3\exp\left(  -\,\frac{D_{d,n}\Gamma\left( d+\frac{n}{2}\right) 2^{\,d-1}}{\Vert Z_{\Vector{x}}\Vert^2_{\LL} \Gamma(d+1) \pi^{\frac{n}{2}} } \,\frac{t^2}{12}\right).
\end{aligned}
\end{equation*}
Now, combining this with \eqref{eq:first_harm}, applying Proposition \ref{prop:subgaussiantailbound} and proceeding as in the proof of Proposition \ref{cor:sym}, we derive
\begin{equation}\label{eq:bound_Eh}
    \begin{aligned}
    \K{E}_\fkh\frac{\Vert \fkh\Vert_{\infty}}{\Vert \fkh\Vert\phantom{_\infty}}\ &\leq\ \sqrt{2}K\sqrt{\ln 3C(d,n)}\left(1+\frac{1}{\ln 3C(d,n)}\right)\\
    &\leq\ \sqrt{2}\left(1+\frac{1}{\ln d}+\frac{1}{n+1}\right)\sqrt{n\ln d}\,K,
\end{aligned} \end{equation}
where
    \begin{align*}
        K\ =\ \frac{\sqrt{3}\Vert Z_{\Vector{x}}\Vert_{\LL} \sqrt{\Gamma(d+1)} \pi^{\frac{n}{4}}}{\sqrt{D_{d,n}} \sqrt{\Gamma\left( d+\frac{n}{2}\right)}2^{\frac{d-3}{2}}}.
    \end{align*}
    By \eqref{eq:normZ} and the formula $\vert \K{S}^{n-1}\vert = 2\pi^{\frac{n}{2}}/\Gamma\left(\frac{n}{2}\right)$ for the volume of the sphere, we  simplify:
    \begin{align*}
        K\ =\  \sqrt{\frac{12d!\Gamma\left(\frac{n}{2}\right)}{2^{d}\Gamma\left( d+\frac{n}{2}\right)}}\ =\ \frac{2\sqrt{3}}{2^{\frac{d}{2}}}\binom{d+\frac{n}{2}-1}{d}^{-\frac{1}{2}}.
    \end{align*}
    We combine this expression for $K$ with \eqref{eq:bound_Eh} and finally obtain
    \begin{equation*}
        \begin{aligned}
          \K{E}_\fkh\frac{\Vert \fkh\Vert_{\infty}}{\Vert \fkh\Vert\phantom{_\infty}}\ &\leq\  
          2\sqrt{6}\left(1+\frac{1}{\ln d}+\frac{1}{n+1}\right)\frac{\sqrt{n\ln d}}{2^{\frac{d}{2}}}\binom{d+\frac{n}{2}-1}{d}^{-\frac{1}{2}}       \\  
          &\leq\ 2\sqrt{6}\left(1+\frac{1}{\ln d}+\frac{1}{n+1}\right)\sqrt{d!\ln d}\frac{1}{n^{\frac{d-1}{2}}}, 
\end{aligned}
\end{equation*}
as desired. The bound for \eqref{eq:SBROA} is obtained by applying the trick in Remark~\ref{remark:mintrick}. 
\end{proof}   

\subsection{Upper bounds for partially symmetric tensors}

The following propositions provide upper bounds for \eqref{eq:PSBROA}. We state them in the equivalent terms of multi-ho\-mo\-ge\-neous polynomials. As our proof strategies are similar to those in Propositions \ref{cor:sym} and \ref{cor:harm} respectively, we leave out some details.

\begin{prop}\label{cor:part_symA}
Let $m\geq 1$, $d_1,\ldots,d_m\geq 2$ with $\max_j d_j\geq 3$ and $n_1,\ldots,n_m\geq 2$. Then the quantity $\mathcal{A}\left(\bigotimes_{j=1}^m\Sym^{d_j}(\K{K}^{n_j})\right)$ is upper bounded by
\begin{align*}
   2\sqrt{3e}\sqrt{1+\frac{2}{\ln \left(m\max_{j} d_j\right)}}\sqrt{\left(\sum_{j=1}^m n_j\right)\ln \left(m\max_j d_j\right)}\prod_{j=1}^m\binom{d_j+n_j-1}{d_j}^{-\frac{1}{2}}.
\end{align*}
\end{prop}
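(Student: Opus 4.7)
The plan is to follow almost verbatim the strategy of Propositions~\ref{cor:sym} and~\ref{cor:bound_general}, adapted to the multi-homogeneous setting. Let $\mathcal{F}\in\PP_{\mathbf{d},\mathbf{n}}$ be Kostlan and consider the random function
\begin{equation*}
\fkF:\prod_{j=1}^m\bbS(\K{K}^{n_j})\to[0,\infty),\qquad (\Vector{x}^1,\ldots,\Vector{x}^m)\mapsto \frac{\vert\mathcal{F}(\Vector{x}^1,\ldots,\Vector{x}^m)\vert}{\Vert\mathcal{F}\Vert}.
\end{equation*}
Since $\mathcal{F}$ is homogeneous of degree $d_j$ in each block $\Vector{x}^j$, a telescoping argument as in~\eqref{eq:Lip_bound} gives, with respect to the sum-geodesic distance on $\prod_j\bbS(\K{K}^{n_j})$,
\begin{equation*}
\vert\fkF(\Vector{x})-\fkF(\Vector{y})\vert\ \leq\ \Bigl(\max_{\Vector{z}}\fkF(\Vector{z})\Bigr)\sum_{j=1}^m d_j\Vert\Vector{x}^j-\Vector{y}^j\Vert_2\ \leq\ (\max_j d_j)\Bigl(\max_{\Vector{z}}\fkF(\Vector{z})\Bigr)\mathrm{dist}_\bbS(\Vector{x},\Vector{y}),
\end{equation*}
so the hypothesis of Theorem~\ref{theo:generalboundtheo} holds with $m$ factors and $L=\max_j d_j\geq 3\geq 1$.

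Applying Theorem~\ref{theo:generalboundtheo} yields, for every $t>0$,
\begin{equation*}
\K{P}_{\mathcal{F}}\!\left(\frac{\Vert\mathcal{F}\Vert_\infty}{\Vert\mathcal{F}\Vert}\geq t\right)\ \leq\ C\,\K{P}_{\mathcal{F}}\bigl(\fkF(\Vector{e}_1,\ldots,\Vector{e}_1)\geq t/2\bigr),
\end{equation*}
where by Remark~\ref{rem:boundsC} we have $\ln C\leq (\ln(m\max_j d_j)+2)\,k_{\K{K}}\sum_j n_j-\ln(m\max_j d_j)$, with $k_{\K{K}}\in\{1,2\}$ according to whether $\K{K}=\K{R}$ or $\K{K}=\K{C}$ (we use Theorem~\ref{theo:generalboundtheo} with $k_{\K{K}}n_j$ in place of $n_j$ in the complex case, as in Proposition~\ref{cor:sym}). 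At the base point, $\mathcal{F}(\Vector{e}_1,\ldots,\Vector{e}_1)=\mathcal{F}_{\Vector{\alpha}_0}$ for $\Vector{\alpha}_0=((d_1,0,\ldots,0),\ldots,(d_m,0,\ldots,0))$, and $\binom{\mathbf{d}}{\Vector{\alpha}_0}=1$, so $\mathcal{F}_{\Vector{\alpha}_0}$ is a standard (complex) Gaussian and $\mathcal{F}\mapsto\mathcal{F}_{\Vector{\alpha}_0}$ is an orthogonal (respectively unitary) projection onto a coordinate axis of $\K{K}^N$, with $N=\prod_j\binom{d_j+n_j-1}{d_j}$. Proposition~\ref{prop:projection} (and Remark~\ref{rem:projection} in the complex case) then yields
\begin{equation*}
\K{P}_{\mathcal{F}}\!\left(\frac{\vert\mathcal{F}_{\Vector{\alpha}_0}\vert}{\Vert\mathcal{F}\Vert}\geq \frac{t}{2}\right)\ \leq\ 3\exp\!\left(-\frac{k_{\K{K}}N}{12\,e^{k_{\K{K}}-1}}\,t^2\right).
\end{equation*}

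Combining these two bounds produces a subgaussian tail for $\Vert\mathcal{F}\Vert_\infty/\Vert\mathcal{F}\Vert$ with constant $K=\sqrt{12e^{k_{\K{K}}-1}/(k_{\K{K}}N)}$; the trick of Remark~\ref{remark:mintrick} together with the identity $(1+2/\ln(mL))\ln(mL)=\ln(mL)+2$ and the fact that $\ln 3-\ln(m\max_j d_j)\leq 0$ (since $m\max_j d_j\geq 3$) then gives
\begin{equation*}
\mathcal{A}\!\left(\bigotimes_{j=1}^m\Sym^{d_j}(\K{K}^{n_j})\right)\ \leq\ \sqrt{\frac{12\,e^{k_{\K{K}}-1}}{N}}\sqrt{1+\frac{2}{\ln(m\max_j d_j)}}\sqrt{\Bigl(\sum_j n_j\Bigr)\ln(m\max_j d_j)},
\end{equation*}
and using the worst-case constant $\sqrt{12e}=2\sqrt{3e}$ (which bounds both $\sqrt{12}$ for $\K{K}=\K{R}$ and $\sqrt{12e}$ for $\K{K}=\K{C}$) together with $1/\sqrt{N}=\prod_j\binom{d_j+n_j-1}{d_j}^{-1/2}$ produces the claimed inequality. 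There is no genuinely new obstacle beyond the arguments of Propositions~\ref{cor:sym} and~\ref{cor:bound_general}; the only point that requires care is the constant bookkeeping, namely tracking $k_{\K{K}}$ uniformly through the real/complex dichotomy and verifying the choice $L=\max_j d_j$ so that the quantity $mL$ in Theorem~\ref{theo:generalboundtheo} becomes exactly $m\max_j d_j$ inside the logarithm.
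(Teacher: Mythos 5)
Your proposal is correct and follows essentially the same path as the paper's proof: same Lipschitz function $\fkF$ on $\prod_j\bbS(\K{K}^{n_j})$ with Lipschitz constant bounded by $L=\max_j d_j$ times $\max\fkF$, same application of Theorem~\ref{theo:generalboundtheo} combined with Proposition~\ref{prop:projection}/Remark~\ref{rem:projection} at the evaluation point, and the same use of the trick from Remark~\ref{remark:mintrick} to conclude. The only difference is that you make explicit the bookkeeping of $\ln 3C$ via Remark~\ref{rem:boundsC} and the absorption of $\ln 3$ using $m\max_j d_j\geq 3$, steps which the paper leaves implicit.
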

\begin{prop}\label{cor:part_symB}
Let $m\geq 1$, $d_1,\ldots,d_m\geq 2$ with $\max_j d_j\geq 3$ and $n_1,\ldots,n_m\geq 2$. Then the quantity $\mathcal{A}\left(\bigotimes_{j=1}^m\Sym^{d_j}(\K{R}^{n_j})\right)$ is upper bounded by
\begin{align*}
   2\sqrt{3}\sqrt{1+\frac{2}{\ln \left(m\max_{j} d_j\right)}}\frac{\sqrt{\left(\sum_{j=1}^m n_j\right)\ln \left(m\max_{j} d_j\right)}}{2^{\frac{1}{2}\sum_{j=1}^m d_j}}\prod_{j=1}^m\binom{d_j+\frac{n_j}{2}-1}{d_j}^{-\frac{1}{2}}.
\end{align*}
\end{prop}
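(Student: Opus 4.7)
The plan is to mirror the strategy of Proposition~\ref{cor:harm}, but on the subspace of ``multi-harmonic'' multi-forms inside $\mathrm{P}_{\Vector{d},\Vector{n}}$. Since the best rank-one approximation ratio is a minimum over the whole space, it is bounded above by the infimum taken only over the subspace $\mathcal{H}_{\Vector{d},\Vector{n}}:=\bigotimes_{j=1}^m \mathrm{H}_{d_j,n_j}$, and hence by the expectation of $\Vert \fkH\Vert_\infty/\Vert \fkH\Vert$ for a Gaussian $\fkH\in \mathcal{H}_{\Vector{d},\Vector{n}}$, where both norms are restricted from $\mathrm{P}_{\Vector{d},\Vector{n}}$. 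The orthogonal invariance of $\mathrm{H}_{d_j,n_j}$ with respect to $O(n_j)$ on each factor transfers to joint invariance of the distribution of $\fkH$ under $O(n_1)\times\cdots\times O(n_m)$.

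First I would set up the random Lipschitz function $\fkF(\Vector{x}^1,\ldots,\Vector{x}^m):=|\fkH(\Vector{x}^1,\ldots,\Vector{x}^m)|/\Vert \fkH\Vert$ on $\prod_{j=1}^m \bbS^{n_j-1}$. A telescoping argument as in \eqref{eq:Lip_bound}, combined with the fact that each factor appears with multiplicity $d_j$, yields
\[
\mathrm{Lip}(\fkF)\ \leq\ \Bigl(\max_{j} d_j\Bigr)\max_{\Vector{x}} \fkF(\Vector{x}),
\]
so I can apply Theorem~\ref{theo:generalboundtheo} with $L=\max_j d_j$ and $d=m$, noting $m\cdot L\geq 3$. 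This produces a constant $C(L,m;n_1,\ldots,n_m)$ whose logarithm is $O\bigl((\sum_j n_j)\ln(m\max_j d_j)\bigr)$, matching the form of the factor under the square root in the statement. By orthogonal invariance the tail probability on the right-hand side of \eqref{eq:bound_Pmax} is independent of the chosen point and can be evaluated at $(\Vector{e}_1,\ldots,\Vector{e}_1)$.

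Next I would reduce the tail estimate at this single point to a rank-one projection. By the reproducing property \eqref{eq:zonal} applied factor by factor, evaluation $\fkH\mapsto \fkH(\Vector{e}_1,\ldots,\Vector{e}_1)$ equals the $\prod_{j=1}^m \LL$-inner product with $Z^{(1)}_{\Vector{e}_1}\otimes\cdots\otimes Z^{(m)}_{\Vector{e}_1}$, which realises a rank-one orthogonal projection on $\mathcal{H}_{\Vector{d},\Vector{n}}$. Proposition~\ref{prop:projection} with $k=1$ and $N=\prod_j D_{d_j,n_j}$ then yields a subgaussian tail of the form $3\exp(-Nt^2/(12\,\Vert Z^{(1)}_{\Vector{e}_1}\otimes\cdots\otimes Z^{(m)}_{\Vector{e}_1}\Vert^2_{L^2}))$. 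Here I must be careful that $\fkH$ is Gaussian with respect to the Bombieri-Weyl product, not the $L^2$ product; applying Lemma~\ref{lem:comparison} on each factor rescales the effective variance by the product $\prod_{j=1}^m \frac{2^{d_j-1}\Gamma(d_j+n_j/2)}{\sqrt{\pi}^{n_j}\Gamma(d_j+1)}$, which together with \eqref{eq:normZ} and $|\bbS^{n_j-1}|=2\sqrt{\pi}^{n_j}/\Gamma(n_j/2)$ collapses into the clean expression $2^{-\sum_j d_j}\prod_{j=1}^m \binom{d_j+n_j/2-1}{d_j}^{-1}$.

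Finally, combining the subgaussian tail at the base point with Theorem~\ref{theo:generalboundtheo} gives
\[
\mathbb{P}_{\fkH}\!\left(\frac{\Vert \fkH\Vert_\infty}{\Vert \fkH\Vert}\geq t\right)\ \leq\ 3C(L,m;\Vector{n})\exp\!\left(-\frac{t^2}{K^2}\right)
\]
with $K^2$ proportional to $2^{-\sum_j d_j}\prod_{j=1}^m \binom{d_j+n_j/2-1}{d_j}^{-1}$, and the Remark~\ref{remark:mintrick} trick converts this into the desired upper bound on the minimum ratio. The main obstacle is bookkeeping: propagating the $L^2$/Bombieri-Weyl conversion factor of Lemma~\ref{lem:comparison} across $m$ factors and checking that the resulting constant really telescopes into $2^{-\frac{1}{2}\sum_j d_j}\prod_j \binom{d_j+n_j/2-1}{d_j}^{-1/2}$, so that when $m=1$ one recovers Proposition~\ref{cor:harm}.
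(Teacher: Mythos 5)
Your proposal follows the same route as the paper: restrict to the multi-harmonic subspace $\bigotimes_j \mathrm{H}_{d_j,n_j}$ with Lipschitz constant $L=\max_j d_j$, apply Theorem~\ref{theo:generalboundtheo}, reduce the tail at a base point to a rank-one projection onto the tensor product of zonal harmonics via \eqref{eq:zonal} and Proposition~\ref{prop:projection}, convert between the $L^2$ and Bombieri--Weyl norms factor by factor with Lemma~\ref{lem:comparison}, and conclude with the minimum trick of Remark~\ref{remark:mintrick}. The bookkeeping you anticipate works out exactly as in the paper's proof, collapsing to $2^{-\frac{1}{2}\sum_j d_j}\prod_j \binom{d_j+n_j/2-1}{d_j}^{-1/2}$.
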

\begin{proof}[Proof of Proposition~\ref{cor:part_symA}]
Let $\mathcal{F}\in \PP_{\mathbf{d},\mathbf{n}}$ be a Kostlan multi-homogeneous polynomial, where  $\mathbf{d}=(d_1,\dots, d_m)$ and $\mathbf{n}=(n_1,\dots, n_m)$. Then, by \eqref{eq:Lip_bound}, the Lipschitz constant of
\begin{align*}
    \mathfrak{F}: \K{S}(\K{K}^{n_1})\times\dots\times \K{S}(\K{K}^{n_m}) &\rightarrow\ \K{R},\\
   \Vector{x}\ =\ (\Vector{x}^1,\ldots,\Vector{x}^m)\ &\mapsto\ \frac{|\mathcal{F}(\Vector{x}^1,\ldots,\Vector{x}^m)|}{\Vert\mathcal{F}\Vert},
\end{align*}
is bounded from above by  $\left(\max_j d_j\right) \max_{\Vector{x}\in \prod_{j=1}^m \K{S}(\K{K}^{n_j})} \vert \mathfrak{F}(\Vector{x})\vert$. Applying Theorem~\ref{theo:generalboundtheo} and using the fact that the Kostlan distribution on $\PP_{\mathbf{d},\mathbf{n}}$ is invariant under the action of the product of orthogonal (respectively, unitary) groups, we obtain
\begin{align}
    \mathbb{P}_{\mathfrak{F}} \left( \max_{\Vector{x}\in \prod_{j=1}^m \K{S}(\K{K}^{n_j})}\mathfrak{F}(\Vector{x})\geq t\right)\ \leq\ C(\mathbf{d},\mathbf{n})\,\K{P}_{\mathfrak{F}}\left(\frac{\vert \mathcal{F}(\Vector{e}^1,\dots, \Vector{e}^1)\vert}{\Vert\mathcal{F}\Vert} \geq \frac{t}{2}\right), 
\end{align}
where 
\begin{align}\label{eq:ln(C)}
    \ln C(\mathbf{d},\mathbf{n})\ =\ (2+\ln(m\max_j d_j))(\sum_{j=1}^m kn_j) -\frac{1}{2}\sum_{j=1}^m \ln(kn_j-1)-m\ln(m\max_j d_j).
\end{align}
Since $F\mapsto F(\Vector{e}^1,\dots, \Vector{e}^1)(x^1_1)^{d_1}\cdots (x^m_1)^{d_m}$ is an orthogonal (respectively, unitary) projection, Proposition \ref{prop:projection} applied to the last term in the above inequality yields
\begin{align}
    \mathbb{P}_{\mathcal{F}} \left(\frac{\ \ \Vert \mathcal{F}\Vert_\infty}{\Vert \mathcal{F}\Vert} \geq t\right)\ \leq\ 3C(\mathbf{d},\mathbf{n})\,
\exp\left(-\frac{N}{12 e^{k-1}}\, t^2\right)
\end{align}
with $N=\prod_{j=1}^m k{d_j+n_j-1\choose d_j}$ being the dimension of $\PP_{\mathbf{d},\mathbf{n}}$. 
Finally, applying \eqref{eq:trick} to the obtained tail estimate for the norm ratio yields the claim. \end{proof}

\begin{proof}[Proof of Proposition~\ref{cor:part_symB}]

Consider the subspace $\textrm{H}_{\mathbf{d},\mathbf{n}}:=\otimes_{j=1}^m \textrm{H}_{d_j,n_j}\subseteq \PP_{\mathbf{d},\mathbf{n}}$ of multi-homogeneous harmonic forms. We endow it with the $L^2(\K{S}^{\mathbf{n}-1})$-product defined for (decomposable) elements $H(\Vector{x}^1,\dots, \Vector{x}^m)=\prod_{j=1}^m h_j(\Vector{x}^j)$, $H'(\Vector{x}^1,\dots, \Vector{x}^m)=\prod_{j=1}^m h'_j(\Vector{x}^j)$ via
\begin{align}\label{eq:L^2_multi}
    \langle H, H'\rangle_{L^2(\K{S}^{\mathbf{n}-1})}\ =\ \prod_{j=1}^m \langle h_j, h_j'\rangle_{L^2(\K{S}^{n_j-1})}
\end{align}
and then extended to the whole space $\textrm{H}_{\mathbf{d},\mathbf{n}}$ by multi-linearity. 
Lemma \ref{lem:comparison} directly implies that the Bombieri-Weyl product \eqref{eq:product_multihom} and the $L^2(\K{S}^{\mathbf{n}-1})$-product are proportional,
\begin{align}
    \langle H, H'\rangle\ =\ \prod_{j=1}^m \frac{2^{\,d_j-1}}{\sqrt{\pi}^{\,n_j}}\frac{\Gamma\left(d_j+\frac{n_j}{2}\right)}{\Gamma\left(d_j+1\right)}\, \langle H, H'\rangle_{L^2(\K{S}^{\mathbf{n}-1})}, \quad H, H'\in \textrm{H}_{\mathbf{d},\mathbf{n}}.
\end{align}

By \eqref{eq:zonal} and \eqref{eq:L^2_multi}, the evaluation of $H\in \textrm{H}_{\mathbf{d},\mathbf{n}}$ at any point $\Vector{x}=(\Vector{x}^1,\dots, \Vector{x}^m)\in \K{S}^{\mathbf{n}-1}$ is given by taking inner product with $\textrm{Z}_{\Vector{x}}\in \textrm{H}_{\mathbf{d},\mathbf{n}}$ defined by  $\textrm{Z}_{\Vector{x}}(\Vector{y}):=\prod_{j=1}^m Z_{\Vector{x}^j}(\Vector{y}^j)$.
So, 
\begin{align}
    \textrm{P}: \textrm{H}_{\mathbf{d},\mathbf{n}}\ &\rightarrow\ \K{R}\textrm{Z}_{\Vector{x}},\\
    H\ &\mapsto\ \frac{H(\Vector{x})}{\Vert \textrm{Z}_{\Vector{x}}\Vert_{L^2(\K{S}^{\mathbf{n}-1})}}\frac{\textrm{Z}_{\Vector{x}}}{\Vert \textrm{Z}_{\Vector{x}}\Vert_{L^2(\K{S}^{\mathbf{n}-1})}},
\end{align}
is an orthogonal projection on the line through $\textrm{Z}_{\Vector{x}}$.
Let $\mathfrak{H}\in \textrm{H}_{\mathbf{d},\mathbf{n}}$ be a Gaussian multi-homogeneous harmonic.
Proceeding as in the proof of Proposition \ref{cor:harm}, we obtain that
\begin{align}
    \K{P}_{\mathfrak{H}}\left( \frac{\ \ \Vert \mathfrak{H}\Vert_{\infty}}{\Vert \mathfrak{H}\Vert} \geq t \right)\ \leq\ 3C(\mathbf{d},\mathbf{n})\,\exp\left( - \prod_{j=1}^m \frac{D_{d_j,n_j}\Gamma\left(d_j+\frac{n_j}{2}\right) 2^{\,d_j-1}}{\Vert \textrm{Z}_{\Vector{x}^j}\Vert^2_{L^2(\K{S}^{n_j-1})} \Gamma\left(d_j+1\right) \pi^{\frac{n_j}{2}}}\ \frac{t^3}{12}\right),
\end{align}
where $\ln C(\mathbf{d},\mathbf{n})$ is given by \eqref{eq:ln(C)} with $k=1$. This estimate combined with \eqref{eq:trick} yields the claimed bound after some elementary simplifications.
\end{proof}

\section{Lower bounds for (partially) symmetric tensors}\label{sec:lower}

In this section we prove lower bounds stated in Theorem~\ref{thm:combi} and in  Theorem~\ref{them:combi2}.  
Our new lower bounds rely on the integral representation of the Bombieri-Weyl norm. 

\begin{prop}\label{prop:lowerboundcomplex}
For any $d\geq 1$ and $n\geq 2$
\begin{align}\label{eq:new_complex}
    \mathcal{A}(\Sym^d(\K{C}^n))\ \geq\ \max\left\{\binom{d+n-1}{d}^{-\frac{1}{2}},\ \frac{1}{n^{\frac{d-1}{2}}}\right\}.
\end{align}
\end{prop}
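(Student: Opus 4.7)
The plan is to derive the bound $\binom{d+n-1}{d}^{-1/2}$ from an integral representation of the Bombieri--Weyl norm on $\PP_{d,n}\uC$, and to get the bound $n^{-(d-1)/2}$ for free from the trivial estimate \eqref{eq:trivial_bound}.

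The central observation is that the complex vector space $\PP_{d,n}\uC\simeq \Sym^d(\K{C}^n)$ is an irreducible unitary representation of $U(n)$ under change of variables. Since both the Bombieri--Weyl Hermitian product \eqref{eq:BW} and the $L^2$-product on $\bbS(\K{C}^n)$, defined by $\langle f,g\rangle_{L^2}:=\int_{\bbS(\bbC^n)}\overline{f(\Vector{x})}g(\Vector{x})\,\mathrm{d}\mu(\Vector{x})$ with $\mu$ the uniform probability measure, are $U(n)$-invariant Hermitian forms, Schur's lemma forces them to be proportional. I would fix the constant of proportionality by evaluating on the single monomial $f(\Vector{x})=x_1^d$. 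A direct computation using \eqref{eq:BW} gives $\|f\|^2=1$, while the classical moment
\begin{equation*}
\int_{\bbS(\bbC^n)}|x_1|^{2d}\,\mathrm{d}\mu(\Vector{x})\ =\ \frac{(n-1)!\,d!}{(n+d-1)!}\ =\ \binom{d+n-1}{d}^{-1}
\end{equation*}
yields $\|f\|^2_{L^2}=\binom{d+n-1}{d}^{-1}$. Thus for every $f\in\PP_{d,n}\uC$,
\begin{equation*}
\|f\|^2\ =\ \binom{d+n-1}{d}\int_{\bbS(\K{C}^n)}|f(\Vector{x})|^2\,\mathrm{d}\mu(\Vector{x}).
\end{equation*}

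From here, the first bound is immediate: since $|f(\Vector{x})|^2\leq\|f\|_\infty^2$ pointwise on $\bbS(\K{C}^n)$ and $\mu$ is a probability measure,
\begin{equation*}
\|f\|^2\ \leq\ \binom{d+n-1}{d}\|f\|_\infty^2,
\end{equation*}
which rearranges to $\|f\|_\infty/\|f\|\geq\binom{d+n-1}{d}^{-1/2}$, and taking the minimum over $f\in\PP_{d,n}\uC$ gives the first factor in the max. For the second factor, I simply chain the already-recorded inequality $\mathcal{A}(\Sym^d(\K{K}^n))\geq\mathcal{A}(\K{K}^{\mathbf{n}})$ with $\mathbf{n}=(n,\ldots,n)$ against the trivial bound \eqref{eq:trivial_bound}, which specializes to $\mathcal{A}(\K{C}^{\mathbf{n}})\geq n^{-(d-1)/2}$.

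The only subtle point is establishing (or citing) that $\Sym^d(\K{C}^n)$ is irreducible as a $U(n)$-representation, so that Schur's lemma applies and the two invariant Hermitian forms are proportional; everything else is a short evaluation on a monomial and a pointwise bound. I do not expect any genuine obstacle, as both facts are standard in the representation theory of $U(n)$ and appear (in slightly different guises) already in the discussion preceding Lemma~\ref{lem:comparison}.
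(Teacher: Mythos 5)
Your proof is correct and follows the same route as the paper: both hinge on the integral representation of the Bombieri--Weyl norm, $\|f\|^2=\binom{d+n-1}{d}\,\bbE_{\fkz\in\bbS(\K{C}^n)}|f(\fkz)|^2$, from which the bound follows by replacing $|f|^2$ with its pointwise maximum. The one difference is cosmetic: the paper asserts this representation without derivation, whereas you supply a clean proof of it via Schur's lemma on the irreducible $U(n)$-module $\Sym^d(\K{C}^n)$ together with the moment computation $\int_{\bbS(\K{C}^n)}|x_1|^{2d}\,\mathrm{d}\mu=\binom{d+n-1}{d}^{-1}$, and the handling of the $n^{-(d-1)/2}$ term via \eqref{eq:trivial_bound} is identical.
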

\begin{prop}\label{prop:lowerboundreal}
For any $d\geq 1$ and $n\geq 2$,
\begin{align}\label{eq:new_real}
    \mathcal{A}(\Sym^d(\K{R}^n))\ \geq\ \max\left\{\frac{1}{2^{\frac{d}{2}}}\binom{d+n-1}{d}^{-\frac{1}{2}},\ \frac{1}{n^{\frac{d-1}{2}}}\right\}.
\end{align}
\end{prop}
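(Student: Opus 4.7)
The plan is to obtain the maximum of two lower bounds via two independent arguments. The bound $\mathcal{A}(\Sym^d(\K{R}^n))\geq 1/n^{(d-1)/2}$ is immediate: it is just the general lower bound \eqref{eq:trivial_bound} for $\mathbf{n}=(n,\ldots,n)$, combined with the inclusion $\mathcal{A}(\Sym^d(\K{K}^n))\geq \mathcal{A}(\K{K}^\mathbf{n})$ noted after \eqref{eq:SBROA}. The interesting content is the bound $\mathcal{A}(\Sym^d(\K{R}^n))\geq 2^{-d/2}\binom{d+n-1}{d}^{-1/2}$, which I would prove in two steps: first establish the complex analog (Proposition~\ref{prop:lowerboundcomplex}) via integration on the complex sphere, then lose a factor $2^{d/2}$ when descending to the real sphere.

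For the complex case I would use the standard identity $\int_{\bbS(\K{C}^n)}|\Vector{x}^\alpha|^2\,\rmd\sigma(\Vector{x}) = \alpha!(n-1)!/(d+n-1)!$ for $|\alpha|=d$, where $\sigma$ is the normalized Haar measure. Inserting it in the monomial expansion of $|f(\Vector{x})|^2$ and comparing with the definition \eqref{eq:BW} of the Bombieri--Weyl inner product yields, for any $f\in \PP_{d,n}$ over $\K{C}$,
\begin{equation*}
\Vert f\Vert^2\ =\ \binom{d+n-1}{d}\int_{\bbS(\K{C}^n)}|f(\Vector{x})|^2\,\rmd\sigma(\Vector{x}).
\end{equation*}
Since $|f(\Vector{x})|\leq\Vert f\Vert_\infty$ pointwise on the unit sphere, this gives $\Vert f\Vert\leq \binom{d+n-1}{d}^{1/2}\Vert f\Vert_\infty$, which is Proposition~\ref{prop:lowerboundcomplex}.

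For the real case I would view a real form $f\in\PP_{d,n}$ as a complex one and apply the complex inequality above; the Bombieri--Weyl norm is unchanged, so the only remaining task is to compare the complex spectral norm $\Vert f\Vert_{\infty,\K{C}}:=\max_{\Vector{z}\in\bbS(\K{C}^n)}|f(\Vector{z})|$ with the real one $\Vert f\Vert_{\infty,\K{R}}:=\max_{\Vector{x}\in\bbS(\K{R}^n)}|f(\Vector{x})|$. The key lemma I would prove is $\Vert f\Vert_{\infty,\K{C}}\leq 2^{d/2}\Vert f\Vert_{\infty,\K{R}}$. To see this, write $\Vector{z}=\Vector{u}+i\Vector{v}$ with $\Vector{u},\Vector{v}\in\K{R}^n$, $\Vert\Vector{u}\Vert_2^2+\Vert\Vector{v}\Vert_2^2=1$, and expand $f(\Vector{z})=T(\Vector{z},\ldots,\Vector{z})$ by multilinearity of the associated real symmetric tensor $T$:
\begin{equation*}
f(\Vector{u}+i\Vector{v})\ =\ \sum_{k=0}^d i^k\binom{d}{k}\,T(\Vector{u}^{d-k},\Vector{v}^k).
\end{equation*}
By Banach's theorem for real symmetric tensors \cite{Banach}, $|T(\Vector{u}^{d-k},\Vector{v}^k)|\leq \Vert\Vector{u}\Vert_2^{d-k}\Vert\Vector{v}\Vert_2^k\,\Vert f\Vert_{\infty,\K{R}}$; the triangle inequality and the binomial theorem then give $|f(\Vector{z})|\leq\bigl(\Vert\Vector{u}\Vert_2+\Vert\Vector{v}\Vert_2\bigr)^d\Vert f\Vert_{\infty,\K{R}}\leq 2^{d/2}\Vert f\Vert_{\infty,\K{R}}$, the last step by Cauchy--Schwarz. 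Chaining the two bounds, $\Vert f\Vert^2\leq \binom{d+n-1}{d}\Vert f\Vert_{\infty,\K{C}}^2\leq 2^d\binom{d+n-1}{d}\Vert f\Vert_{\infty,\K{R}}^2$, which is the claim.

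The main obstacle is really this real-to-complex comparison: everything else is routine once the complex integral formula is in hand. The cleanness of the comparison hinges on Banach's theorem being available to bound evaluations of the multilinear form $T$ on mixed arguments $(\Vector{u}^{d-k},\Vector{v}^k)$ by its diagonal spectral norm, which is a genuinely symmetric-tensor fact with no counterpart for general tensors.
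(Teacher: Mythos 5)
Your proposal is correct and follows the same overall skeleton as the paper: prove the complex lower bound via the integral (Gaussian/spherical) representation of the Bombieri--Weyl norm, then descend to the real case by comparing the complex and real uniform norms, losing a factor $2^{d/2}$. The genuine difference is in how the comparison inequality $\Vert f\Vert_{\infty,\K{C}}\leq 2^{d/2}\Vert f\Vert_{\infty,\K{R}}$ is obtained. The paper simply cites it from Siciak (their equation \eqref{eq:comparison}), whereas you give a short self-contained derivation: expand $f(\Vector{u}+i\Vector{v})$ by multilinearity of the underlying symmetric form $T$, bound each term $|T(\Vector{u}^{d-k},\Vector{v}^k)|$ by $\Vert\Vector{u}\Vert_2^{d-k}\Vert\Vector{v}\Vert_2^k\Vert f\Vert_{\infty,\K{R}}$ using Banach's theorem (which is legitimate, since Banach's theorem says exactly that the spectral norm of a real symmetric tensor over arbitrary rank-one arguments equals its value on the diagonal), then apply the binomial theorem and Cauchy--Schwarz. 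This is a nice proof from first principles of a fact the paper leaves as a citation, and it correctly isolates the role of the symmetric structure — the paper's narrative does not make it clear that Banach's theorem is precisely what makes the $2^{d/2}$ constant work. One small addition you make that the paper leaves tacit: you explicitly note that the $1/n^{(d-1)/2}$ part of the max is just the general lower bound \eqref{eq:trivial_bound} together with $\mathcal{A}(\Sym^d(\K{K}^n))\geq\mathcal{A}(\K{K}^{\Vector{n}})$; the paper's proofs of both Propositions~\ref{prop:lowerboundcomplex} and~\ref{prop:lowerboundreal} only address the binomial term and leave this part implicit.
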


\begin{proof}[Proof of Proposition~\ref{prop:lowerboundcomplex}]
The inner product \eqref{eq:BW} admits an integral representation
\begin{align*}
    \langle f, f'\rangle\ =\ \binom{d+n-1}{d}\bbE_{\fkz\in\bbS(\K{C}^n)} \overline{f(\Vector{\fkz})}f'(\Vector{\fkz}),
\end{align*}
where $\fkz$ is a random vector uniformly distributed in the sphere $\bbS(\K{C}^n)$. It implies that
\begin{align}\label{eq:integral_norm}
    \Vert f\Vert^2\ =\ \binom{d+n-1}{d} \bbE_{\fkz\in\bbS(\K{C}^n)} \vert f(\Vector{z})\vert^2\ \leq\ \binom{d+n-1}{d} \Vert f\Vert_\infty^2.
\end{align}
and hence the lower bound follows.
\end{proof}

\begin{proof}[Proof of Proposition~\ref{prop:lowerboundreal}]
A result from \cite{Siciak} (see also \cite[($17.5$)]{KL2018}) asserts that the complex and the real uniform norms of  $f\in \PP_{d,n}$ are linked by
\begin{align}\label{eq:comparison}
\Vert f\Vert_{\infty,\K{C}}\ =\ \max_{\Vector{z}\in \bbS(\K{C}^n)}\vert f(\Vector{z})\vert\ \leq\ \sqrt{2}^d\max_{\Vector{x}\in \bbS(\K{R}^n)}\vert f(\Vector{x})\vert\ =\ \sqrt{2}^d\Vert f\Vert_{\infty,\K{R}},
\end{align}
which combined with \eqref{eq:integral_norm} yields the desired bound. 
\end{proof}

We can also provide similar lower bounds for the case of partially symmetric tensors.

\begin{prop}\label{prop:lowerboundcomplexpart}
For any $m\geq 1$, $d_1,\ldots,d_m\geq 1$ and $n_1,\ldots,n_m\geq 2$
\begin{align}\label{eq:new_complexpart}
    \mathcal{A}\left(\bigotimes_{j=1}^m \Sym^{d_j}(\K{C}^{n_j})\right)\ \geq\ \max\left\{\prod_{j=1}^m\binom{d_j+n_j-1}{d_j}^{-\frac{1}{2}},\ \sqrt{\frac{\max_j n_j}{\prod_{j=1}^m n_j^{d_j}}}\right\}.
\end{align}
\end{prop}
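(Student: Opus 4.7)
The plan is to take the maximum apart and prove each of the two lower bounds separately, by generalizing the two sources of bounds that are already available for symmetric tensors and for general tensors.

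For the first term $\prod_{j=1}^m \binom{d_j+n_j-1}{d_j}^{-1/2}$, the strategy is to extend the integral representation of the Bombieri-Weyl product used in the proof of Proposition~\ref{prop:lowerboundcomplex}. Concretely, I would show that for any $F,F'\in\PP_{\mathbf{d},\mathbf{n}}$,
\begin{equation*}
\langle F, F'\rangle\ =\ \prod_{j=1}^m \binom{d_j+n_j-1}{d_j}\,\bbE_{\fkz^1,\ldots,\fkz^m}\,\overline{F(\fkz^1,\ldots,\fkz^m)}\,F'(\fkz^1,\ldots,\fkz^m),
\end{equation*}
where the $\fkz^j\in\bbS(\K{C}^{n_j})$ are mutually independent and uniformly distributed. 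This follows from the one-variable identity used in Proposition~\ref{prop:lowerboundcomplex} applied factor by factor on decomposable elements $F(\Vector{x}^1,\dots,\Vector{x}^m)=\prod_{j=1}^m f_j(\Vector{x}^j)$, combined with multi-linearity of the Bombieri-Weyl product \eqref{eq:product_multihom} and Fubini's theorem for the product measure on $\prod_j\bbS(\K{C}^{n_j})$. Setting $F=F'$ and bounding the pointwise modulus by $\Vert F\Vert_\infty$ as in \eqref{eq:integral_norm} yields
\begin{equation*}
\Vert F\Vert^2\ \leq\ \prod_{j=1}^m \binom{d_j+n_j-1}{d_j}\,\Vert F\Vert_\infty^2,
\end{equation*}
which is exactly the first bound.

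For the second term $\sqrt{\max_j n_j\big/\prod_j n_j^{d_j}}$, I would use the inclusion of $\bigotimes_{j=1}^m\Sym^{d_j}(\K{C}^{n_j})$ inside the space $\K{C}^{\mathbf{n}'}$ of general tensors with format $\mathbf{n}'=(\underbrace{n_1,\dots,n_1}_{d_1},\dots,\underbrace{n_m,\dots,n_m}_{d_m})$ and then invoke the trivial lower bound \eqref{eq:trivial_bound}. Two compatibility checks are needed: (i)~the Frobenius product \eqref{eq:inner_product} of general tensors agrees with the Bombieri-Weyl product \eqref{eq:product_multihom} of partially symmetric forms under this inclusion (by the same binomial-weight calculation that makes these norms match in the purely symmetric case via the identification $t_{i_1\dots i_d}=f_\alpha/\binom{d}{\alpha}$); and (ii)~the partially symmetric spectral norm \eqref{eq:uniform} coincides with the general spectral norm on the inclusion, by the partially symmetric Banach theorem (same reference \cite{Banach} used in Subsection~\ref{sub:BROA}). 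Once these are in place, \eqref{eq:trivial_bound} gives the ratio bound $1\big/\sqrt{\min_i\prod_{j\neq i} n'_j}$, and a direct calculation shows $\min_i\prod_{j\neq i}n'_j=\prod_j n_j^{d_j}\big/\max_j n_j$, which is the claimed term.

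No real obstacle is expected: the only slightly delicate point is the multi-linear extension of the integral representation, but it reduces to the univariate case and a product-measure expectation, and the norm-compatibility check in the embedding step is a bookkeeping exercise with multinomial coefficients analogous to the proof of \eqref{eq:BW}.
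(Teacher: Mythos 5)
Your proposal is correct and follows essentially the same route as the paper: the first term comes from the multi-homogeneous integral representation of the Bombieri--Weyl norm (the paper states it directly; your factor-by-factor derivation via independence, sesquilinearity and decomposable elements is exactly how one proves it), and the second term is obtained by regarding the partially symmetric tensor as a general tensor of format $(n_1,\dots,n_1,\dots,n_m,\dots,n_m)$ and invoking \eqref{eq:trivial_bound}. Your extra care on compatibility points (i) and (ii) is warranted but unremarkable: (ii) is precisely where the Banach-type result for partially symmetric tensors (as in \cite{Friedland}) is needed, since without it the embedding only gives $\Vert F\Vert_{\infty,\text{general}}\geq\Vert F\Vert_{\infty,\text{part.\ sym.}}$, which is the wrong direction for the lower bound; the paper leaves this implicit.
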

\begin{prop}\label{prop:lowerboundrealpart}
For any $m\geq 1$, $d_1,\ldots,d_m\geq 1$ and $n_1,\ldots,n_m\geq 2$
\begin{align}\label{eq:new_realpart}
    \mathcal{A}\left(\bigotimes_{j=1}^m \Sym^{d_j}(\K{R}^{n_j})\right)\ \geq\ \max\left\{\frac{1}{2^{\frac{1}{2}\sum_{j=1}^m d_j}}\prod_{j=1}^m\binom{d_j+n_j-1}{d_j}^{-\frac{1}{2}},\ \sqrt{\frac{\max_j n_j}{\prod_{j=1}^m n_j^{d_j}}}\right\}.
\end{align}
\end{prop}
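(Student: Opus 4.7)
The plan is to mirror the proofs of Propositions~\ref{prop:lowerboundcomplex} and~\ref{prop:lowerboundreal}, replacing the single-variable integral representation by its multi-variable analogue and iterating Siciak's inequality in each block of variables. There are two terms in the maximum, and I will handle them separately.

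First I will establish the integral representation
\begin{align*}
\langle F,F'\rangle\ =\ \prod_{j=1}^m \binom{d_j+n_j-1}{d_j}\,\bbE_{\Vector{\fkz}^j\in\bbS(\bbC^{n_j})}\,\overline{F(\Vector{\fkz}^1,\ldots,\Vector{\fkz}^m)}\,F'(\Vector{\fkz}^1,\ldots,\Vector{\fkz}^m)
\end{align*}
for the Bombieri--Weyl product \eqref{eq:product_multihom} on $\mathrm{P}_{\mathbf{d},\mathbf{n}}$. This can be obtained either by iterating the single-block version that underlies~\eqref{eq:integral_norm} using the product structure $\mathrm{P}_{\mathbf{d},\mathbf{n}}\simeq \bigotimes_j \mathrm{P}_{d_j,n_j}$, or, more abstractly, by observing that both sides are $U(\Vector{n})$-invariant Hermitian forms on the irreducible representation $\mathrm{P}_{\mathbf{d},\mathbf{n}}$ and comparing normalizations on a single monomial. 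Setting $F'=F$ and bounding pointwise yields
\begin{align*}
\|F\|^2\ \leq\ \prod_{j=1}^m \binom{d_j+n_j-1}{d_j}\,\|F\|_{\infty,\bbC}^2,
\end{align*}
which proves the first lower bound in \eqref{eq:new_complexpart} of Proposition~\ref{prop:lowerboundcomplexpart}. For Proposition~\ref{prop:lowerboundrealpart} in the real case, I then iteratively apply Siciak's inequality \eqref{eq:comparison}: for fixed real unit vectors $\Vector{x}^2,\ldots,\Vector{x}^m$, the function $\Vector{x}^1\mapsto F(\Vector{x}^1,\Vector{x}^2,\ldots,\Vector{x}^m)$ is a real form of degree $d_1$, so taking its complex max costs at most a factor $\sqrt{2}^{d_1}$; iterating this argument block by block yields
\begin{align*}
\|F\|_{\infty,\bbC}\ \leq\ \sqrt{2}^{\,\sum_{j=1}^m d_j}\,\|F\|_{\infty,\bbR},
\end{align*}
whence combining with the complex bound produces the first term of \eqref{eq:new_realpart}.

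For the second term in the maximum in both propositions, I will invoke the general lower bound \eqref{eq:trivial_bound}: every element of $\bigotimes_{j=1}^m \Sym^{d_j}(\bbK^{n_j})$ is in particular a general tensor of format $(\underbrace{n_1,\ldots,n_1}_{d_1},\ldots,\underbrace{n_m,\ldots,n_m}_{d_m})$, and both the spectral and Frobenius norms agree under this inclusion. The quantity $\min_k \prod_{i\neq k}$ in \eqref{eq:trivial_bound} for this format equals $\prod_{j=1}^m n_j^{d_j}/\max_j n_j$, which recovers the claimed $\sqrt{\max_j n_j/\prod_{j=1}^m n_j^{d_j}}$.

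The only subtlety I anticipate lies in the iteration of Siciak's inequality, since after the first application the partially evaluated polynomial formally acquires complex coefficients (in the remaining real variables). The cleanest fix is to apply Siciak not to $F$ itself but to its real and imaginary parts in the remaining blocks, or to verify that the cited inequality in \cite{Siciak,KL2018} holds for complex-coefficient polynomials with the same constant $\sqrt{2}^{d}$; in either case the product constant $\sqrt{2}^{\sum_j d_j}$ survives. Everything else is a direct transcription of the $m=1$ argument.
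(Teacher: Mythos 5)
Your proposal follows the paper's proof essentially step for step: establish the complex bound via the multi-block integral representation of the Bombieri--Weyl inner product (this gives Proposition~\ref{prop:lowerboundcomplexpart}), pass to the real case by applying Siciak's inequality~\eqref{eq:comparison} iteratively, one block at a time, and obtain the second term in the maximum by regarding partially symmetric tensors as general tensors of format $(n_1,\ldots,n_1,\ldots,n_m,\ldots,n_m)$ and invoking~\eqref{eq:trivial_bound}.

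The subtlety you flag about the Siciak iteration is genuine and is present in the paper's own chain of inequalities: once any of the blocks has been maximized over a complex sphere, the partially evaluated polynomial in the remaining blocks has complex coefficients, whereas~\eqref{eq:comparison} is stated for $f\in\PP_{d,n}$ with real coefficients. Of your two proposed fixes, though, only the second one preserves the constant. Splitting $g=g_1+i g_2$ into real and imaginary parts and applying Siciak to each gives $\max_{\bbS(\K{C}^{n_j})}|g|\leq\max|g_1|+\max|g_2|\leq\sqrt{2}^{d_j}\bigl(\max_{\bbS(\K{R}^{n_j})}|g_1|+\max_{\bbS(\K{R}^{n_j})}|g_2|\bigr)\leq 2\sqrt{2}^{d_j}\max_{\bbS(\K{R}^{n_j})}|g|$, so this route picks up an extra factor of $2$ at each block whose coefficients have gone complex, yielding $2^{m-1}\sqrt{2}^{\sum_j d_j}\Vert F\Vert_{\infty,\K{R}}$ and hence a weaker bound than claimed. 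The correct route is your second suggestion: Siciak's inequality with the same constant $\sqrt{2}^{d}$ does hold for complex-coefficient homogeneous forms (the extremal plurisubharmonic function argument uses only that $\tfrac{1}{d}\log|P|$ is plurisubharmonic of the right growth, independently of the coefficient field, and $(z_1+iz_2)^d$ shows the constant remains sharp); this is what the paper's iteration implicitly relies on.
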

\begin{proof}[Proof of Proposition~\ref{prop:lowerboundcomplexpart}]
The Bombieri-Weyl norm for multi-homogeneous polynomials that is given by \eqref{eq:product_multihom} admits an integral representation
\begin{equation*}
    \langle F, F'\rangle\ =\ \prod_{j=1}^m\binom{d_j+n_j-1}{d_j}\bbE_{\Vector{\fkz}^1\in\bbS(\K{C}^{n_1})}\cdots \bbE_{\Vector{\fkz}^m\in\bbS(\K{C}^{n_m})} \overline{F(\Vector{\fkz}^1,\ldots,\Vector{\fkz}^m)}F'(\Vector{\fkz}^1,\ldots,\Vector{\fkz}^m),
\end{equation*}
where vectors $\Vector{\fkz}^j\in\bbS(\K{C}^{n_j})$ are indedpendent and uniformly distributed. Hence
{\small\begin{align*}\label{eq:Integral_norm} 
\|F\|^2\ =\ \prod_{j=1}^m\binom{d_j+n_j-1}{d_j}  \bbE_{\Vector{\fkz}^1\in\bbS(\K{C}^{n_1})}\cdots \bbE_{\Vector{\fkz}^m\in\bbS(\K{C}^{n_m})} \vert F(\Vector{\fkz}^1,\dots,\Vector{\fkz}^m)\vert^2\ \leq\ \prod_{j=1}^m\binom{d_j+n_j-1}{d_j}\|F\|^2_\infty.
\end{align*}}
The other lower bound is essentially \eqref{eq:trivial_bound}, since partially symmetric tensors are general tensors of format $(n_1,\ldots,n_1,\ldots,n_m,\ldots,n_m)$.
\end{proof}

\begin{proof}[Proof of Proposition~\ref{prop:lowerboundrealpart}]
Subsequent applications of \eqref{eq:comparison} give
\begin{align*}
    \Vert F\Vert_{\infty,\K{C}}\ &=\ \max_{\Vector{z}^j\in \K{S}(\K{C}^{n_j})} \vert F(\Vector{z}^1,\dots, \Vector{z}^m)\vert\\ 
    &=\ \max_{\Vector{z}^1\in \K{S}(\K{C}^{n_1})}\dots\max_{\Vector{z}^{m-1}\in \K{S}(\K{C}^{n_{m-1}})}\max_{\Vector{z}^m\in \K{S}(\K{C}^{n_m})} \vert F(\Vector{z}^1,\dots, \Vector{z}^{m-1},\Vector{z}^m)\vert\\ 
    &\leq\ \sqrt{2}^{d_m} \max_{\Vector{z}^1\in \K{S}(\K{C}^{n_1})}\dots\max_{\Vector{z}^{m-1}\in \K{S}(\K{C}^{n_{m-1}})} \max_{\Vector{x}^m\in \K{S}(\K{R}^{n_m})} \vert F(\Vector{z}^1,\dots, \Vector{z}^{m-1}, \Vector{x}^m)\vert\\  &\leq\ \dots\ \leq\ \sqrt{2}^{\,\sum_{j=1}^m d_j} \max_{\Vector{x}^j\in \K{S}(\K{R}^{n_j})} \vert F(\Vector{x}^1,\dots, \Vector{x}^{m-1},\Vector{x}^m)\vert\ =\  \sqrt{2}^{\,\sum_{j=1}^m d_j}  \Vert F\Vert_{\infty,\K{R}},
\end{align*}
which combined with the proof of Proposition \ref{prop:lowerboundcomplexpart} implies the claim.
\end{proof}

\section{Estimates for large \texorpdfstring{$d$}{d}}\label{sec:large-d}

In this section we prove the estimates of Theorem~\ref{thm:combi} when $d$ is large.

\begin{prop}\label{prop:binominteger}
Let $d,n\geq 2$ be integers. If $d\geq  n^2/4$, then
\begin{equation}
    \sqrt{\frac{(n-1)!}{d^{n-1}}}\left(1-\frac{n^2}{4d}\right)\ \leq\ \binom{d+n-1}{d}^{-\frac{1}{2}}\ \leq\ \sqrt{\frac{(n-1)!}{d^{n-1}}}.
\end{equation}
Moreover, the upper bound holds for arbitrary $d$ and $n$.
\end{prop}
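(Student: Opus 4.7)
The plan is to work directly with the closed form of the binomial coefficient and control the error coming from the ``shift'' $(d+k)$ versus $d$.

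First I would rewrite
\[
\binom{d+n-1}{d}^{-1}\ =\ \frac{(n-1)!}{(d+1)(d+2)\cdots (d+n-1)}\ =\ \frac{(n-1)!}{d^{n-1}\prod_{k=1}^{n-1}\left(1+\frac{k}{d}\right)},
\]
so that everything reduces to estimating the product $\Pi_{d,n}:=\prod_{k=1}^{n-1}(1+k/d)$ from above and below by $1$ and something close to $1$.

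The upper bound of the proposition is immediate since each factor $1+k/d$ is $\geq 1$, hence $\Pi_{d,n}\geq 1$ and $\binom{d+n-1}{d}^{-1/2}\leq \sqrt{(n-1)!/d^{n-1}}$, with no restriction on $d$ or $n$.

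For the lower bound I would use the standard inequality $\ln(1+x)\leq x$ for $x\geq 0$ to get
\[
\ln \Pi_{d,n}\ \leq\ \sum_{k=1}^{n-1}\frac{k}{d}\ =\ \frac{n(n-1)}{2d}\ \leq\ \frac{n^2}{2d},
\]
so $\Pi_{d,n}\leq e^{n^2/(2d)}$. Consequently
\[
\binom{d+n-1}{d}^{-1/2}\ \geq\ \sqrt{\frac{(n-1)!}{d^{n-1}}}\,e^{-n^2/(4d)}.
\]
The final step is to apply the elementary inequality $e^{-x}\geq 1-x$ (valid for all $x\geq 0$, from the alternating Taylor series of $e^{-x}$ restricted to $x\leq 1$, and trivial for $x\geq 1$) with $x=n^2/(4d)$. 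Under the hypothesis $d\geq n^2/4$ we have $x\leq 1$, so $1-n^2/(4d)\geq 0$ and the bound $e^{-n^2/(4d)}\geq 1-n^2/(4d)$ yields the claim.

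The proof is essentially a chain of elementary inequalities, so there is no real obstacle; the only subtle point is to pick the right place to linearize the exponential so that the constant comes out as $n^2/(4d)$ rather than $n^2/(2d)$, which is why one pulls the square root out before applying $e^{-x}\geq 1-x$, and this is exactly what forces the restriction $d\geq n^2/4$.
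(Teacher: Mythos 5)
Your proof is correct and essentially follows the paper's own route: both pull $d^{n-1}$ out of the rising factorial, bound the residual product by $e^{n(n-1)/(2d)}\le e^{n^2/(2d)}$, and then linearize the exponential. The only cosmetic difference is that the paper inserts an AM--GM step before applying $(1+x/N)^N\le e^x$ and then uses $e^x\le (1-x/2)^{-2}$ before taking a square root, whereas you apply $1+x\le e^x$ term by term and use $e^{-x}\ge 1-x$ after taking the square root --- these are the same inequalities in a slightly different order, yielding identical constants.
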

\begin{prop}\label{prop:binomhalfinteger}
Let $d,n\geq 2$ be integers. If $d\geq  n^2/16$, then
\begin{equation}
    \sqrt{\frac{\Gamma\left(\frac{n}{2}\right)}{d^{\frac{n}{2}-1}}} \left(1-\frac{n^2}{16d}\right)\ \leq\ \binom{d+\frac{n}{2}-1}{d}^{-\frac{1}{2}}\ \leq\ \sqrt{\frac{\Gamma\left(\frac{n}{2}\right)}{d^{\frac{n}{2}-1}}}\left(1+\frac{1}{4d}\right).
\end{equation}
Moreover, the upper bound holds for arbitrary $d$ and $n$.
\end{prop}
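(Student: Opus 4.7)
The plan is to reduce both inequalities to explicit bounds on the ratio
\[
R(d,n)\ :=\ \frac{d^{n/2-1}\,\Gamma(d+1)}{\Gamma(d+n/2)}.
\]
Since $\binom{d+n/2-1}{d}^{-1}=\Gamma(n/2)\Gamma(d+1)/\Gamma(d+n/2)$, dividing the stated inequalities by $\sqrt{\Gamma(n/2)/d^{n/2-1}}$ and squaring reduces the proposition to showing
\[
\left(1-\frac{n^2}{16d}\right)^2\ \leq\ R(d,n)\ \leq\ \left(1+\frac{1}{4d}\right)^2,
\]
where the upper bound must hold for all $d,n\geq 2$, and the lower bound uses $d\geq n^2/16$ (which both keeps $1-n^2/(16d)$ nonnegative and legitimizes $-\ln(1-n^2/(16d))\geq n^2/(16d)$ via $-\ln(1-x)\geq x$).

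Paralleling the strategy behind Proposition~\ref{prop:binominteger}, I would split by the parity of $n$. For $n=2m$ even, $\Gamma(d+m)=\prod_{j=1}^{m-1}(d+j)\cdot\Gamma(d+1)$, so $R(d,2m)=\prod_{j=1}^{m-1}(1+j/d)^{-1}\leq 1$, which covers the upper bound. For the lower bound, take logs of $\prod_{j=1}^{m-1}(1+j/d)\leq(1-n^2/(16d))^{-2}$ and use $\ln(1+x)\leq x$ together with $-\ln(1-y)\geq y$:
\[
\sum_{j=1}^{m-1}\ln(1+j/d)\ \leq\ \frac{m(m-1)}{2d}\ \leq\ \frac{m^2}{2d}\ =\ \frac{n^2}{8d}\ \leq\ -2\ln\!\left(1-\frac{n^2}{16d}\right).
\]

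For $n=2m+1$ odd, expand $\Gamma(d+m+1/2)=\Gamma(d+1/2)\prod_{j=1}^m(d+j-1/2)$ to obtain
\[
R(d,2m+1)\ =\ \frac{\Gamma(d+1)}{\sqrt{d}\,\Gamma(d+1/2)}\,\prod_{j=1}^m\!\left(1+\frac{j-1/2}{d}\right)^{-1}.
\]
Gautschi's inequality $\sqrt{d}\leq \Gamma(d+1)/\Gamma(d+1/2)\leq \sqrt{d+1/2}$ places the first factor in $[1,\sqrt{1+1/(2d)}]\subseteq[1,1+1/(4d)]$ (using $\sqrt{1+x}\leq 1+x/2$). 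This is precisely where the correction $(1+1/(4d))$, absent in Proposition~\ref{prop:binominteger}, originates. Since the remaining product is $\leq 1$, we get $R\leq 1+1/(4d)\leq(1+1/(4d))^2$. For the lower bound, the first factor is $\geq 1$, and the even-case estimate generalizes:
\[
\sum_{j=1}^{m}\ln\!\left(1+\frac{j-1/2}{d}\right)\ \leq\ \frac{m^2}{2d}\ \leq\ \frac{(2m+1)^2}{8d}\ =\ \frac{n^2}{8d}\ \leq\ -2\ln\!\left(1-\frac{n^2}{16d}\right),
\]
where the middle step uses $4m^2\leq(2m+1)^2$.

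The hard part will be the odd case: one must control the half-integer ratio $\Gamma(d+1)/\Gamma(d+1/2)$, and Gautschi's double inequality is what supplies exactly the $1+\frac{1}{4d}$ factor in the statement. Every other step boils down to the same elementary logarithmic bounds used for the integer Proposition~\ref{prop:binominteger}.
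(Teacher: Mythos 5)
Your proof is correct and follows essentially the same route as the paper: reduce to the odd case (the even case already following from Proposition~\ref{prop:binominteger}), peel off the product of linear factors, and invoke Gautschi's inequality to control $\Gamma(d+1)/\Gamma(d+\tfrac{1}{2})$, which is precisely where the extra $(1+\tfrac{1}{4d})$ factor originates. The only cosmetic differences are that you bound the product by summing $\ln(1+x)\leq x$ instead of the paper's AM--GM step, and you verify the even case directly rather than citing Proposition~\ref{prop:binominteger}.
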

\begin{remark}
For arbitrary $d$ and $n$, we can easily see that
\[
\binom{d+n-1}{d}^{-\frac{1}{2}}\ \leq\ \frac{\sqrt{d!}}{n^{\frac{d}{2}}}\quad ~\text{and }\quad ~\binom{d+\frac{n}{2}-1}{d}^{-\frac{1}{2}}\ \leq\ \frac{2^{\frac{d}{2}}\sqrt{d!}}{n^{\frac{d}{2}}}.
\]
Thus, the case when $n$ is large is covered.
\end{remark}
\begin{proof}[Proof of Proposition~\ref{prop:binominteger}]
We have that
\[\binom{d+n-1}{d}\ =\ \frac{1}{(n-1)!}\prod_{k=1}^{n-1}(d+k)\ \geq\ \frac{d^{n-1}}{(n-1)!}.\]

On the other hand, by the AM-GM inquality,
\begin{align*}
    \prod_{k=1}^{n-1}(d+k)&\ \leq\ \left(d+\frac{1}{n-1}\binom{n}{2}\right)^{n-1}&\text{(AM-GM inquality)}\\
    &=\ d^{n-1}\left(1+\frac{1}{d(n-1)}\binom{n}{2}\right)^{n-1}
    \end{align*}\begin{align*}
    &\leq\ d^{n-1}e^{\frac{1}{d}\binom{n}{2}}&\left((1+\frac{x}{N})^N\ \leq\ e^x\right)\\
    &\leq\ \frac{d^{n-1}}{\left(1-\frac{1}{2d}\binom{n}{2}\right)^2}&\left(e^x\ \leq\ \frac{1}{(1-\frac{x}{2})^2}\text{ for }x\in [0,2)\right)\\
    &\leq\ \frac{d^{n-1}}{\left(1-\frac{n^2}{4d}\right)^2}.
\end{align*}
Hence
\[
\binom{d+n-1}{d}\ \leq\ \frac{d^{n-1}}{(n-1)!}\frac{1}{(1-\frac{n^2}{4d})^2}.
\]
Now, the estimates follow.
\end{proof}
\begin{proof}[Proof of Proposition~\ref{prop:binomhalfinteger}]
If $n$ is even, then the estimates follow from the previous proposition. So, we can assume that $n$ is odd. Then
\[
\binom{d+\frac{n}{2}-1}{d}\ =\ \frac{1}{\Gamma\left(\frac{n}{2}\right)}\frac{\Gamma\left(d+\frac{1}{2}\right)}{\Gamma(d+1)}\prod_{k=0}^{\frac{n-1}{2}-1}\left(d+k+\frac{1}{2}\right).
\]
Arguing as in the previous proposition, we have
\[
\frac{d^{\frac{n-1}{2}}}{\Gamma\left(\frac{n}{2}\right)}\frac{\Gamma\left(d+\frac{1}{2}\right)}{\Gamma(d+1)}\ 
\leq\ \binom{d+\frac{n}{2}-1}{d}\ \leq\ \frac{d^{\frac{n-1}{2}}}{\Gamma\left(\frac{n}{2}\right)}\frac{\Gamma\left(d+\frac{1}{2}\right)}{\Gamma(d+1)}\frac{1}{\left(1-\frac{1}{d}\left(\frac{n}{4}\right)^2\right)^2}.
\]
Now, the claimed estimated follow by Gautschi's inequality~\cite[(7)]{gautschi1959},
\[\frac{1}{d^{\frac{1}{2}}}\frac{1}{1+\frac{1}{2d}}\ \leq\ \frac{1}{(1+d)^{\frac{1}{2}}}\ \leq\ \frac{\Gamma\left(d+\frac{1}{2}\right)}{\Gamma(d+1)}\ \leq\ \frac{1}{d^{\frac{1}{2}}}.\]
\end{proof}


\bibliographystyle{plain}
{\small 
\bibliography{biblio}
}

\appendix
\section{Appendix}

For the sake of completeness we include a proof of Proposition~\ref{prop:subgaussiantailbound} and of the formula \eqref{eq:yuprojjacobian}. The latter is needed in the proof of Theorem~\ref{theo:generalboundtheo}.

\subsection{Proof of Proposition~\ref{prop:subgaussiantailbound}}

The proof follows the lines of~\cite[Proposition 2.5.2]{vershyninbook}. We give a more detailed proof than the one given in~\cite[Proposition~4.24]{CETC-norms}. 

We begin with the first statement. Fix $\lambda>0$. Then Markov's inequality implies that
\[
\bbP(|\fkt|\geq t)\ =\ \bbP\left(e^{\lambda^2\fkt^2}\geq e^{\lambda^2t^2}\right)\ \leq\  e^{-\lambda^2t^2}\bbE e^{\lambda^2\fkt^2}.
\]
Now, the Taylor expansion of the exponential function, standard facts from calculus and our assumptions yield
\[
\bbE e^{\lambda^2\fkt^2}\ =\ \sum_{p=0}^\infty \frac{\lambda^{2p}\bbE \fkt^{\,2p}}{p!}\ \leq\ \sum_{p=0}^\infty \frac{(\lambda^22pK^2)^p}{p!}.
\]
With $\lambda=\frac{1}{\sqrt{6}K}$ we obtain that
\[
\bbP(|\fkt|\geq t)\ \leq\ e^{-\frac{t^2}{6K^2}}\sum_{p=0}^\infty \frac{(p/3)^p}{p!}.
\]
Now, by direct computation, $\sum_{p=0}^\infty \frac{(p/3)^p}{p!}\simeq 2.62509\ldots$,
which proves the first assertion.

For the second statement, by \cite[Lemma $1.2.1$]{vershyninbook} we write
\[
\bbE |\fkt|^{\ell}\ =\ \int_{0}^\infty \bbP(\vert \fkt\vert^{\ell}\geq u)\,\mathrm{d}u\ =\ \int_0^\infty\,\ell t^{\ell-1}\bbP(|\fkt|\geq t)\,\mathrm{d}t.
\]
The assumptions imply that for $t\geq K\sqrt{2\ln C}$ we have
\begin{align}\label{eq:boundddd}
\bbP(|\fkt|\geq t)\ \leq\ e^{\ln C-\frac{t^2}{K^2}}\ \leq\ e^{-\frac{t^2}{2K^2}}.
\end{align}
Therefore, 
\begin{align*}\bbE|\fkt|^{\ell}\ &=\ \int_0^{K\sqrt{2\ln C}} \ell t^{\ell-1}\bbP(|\fkt|\geq t)\, \mathrm{d}t+\int_{K\sqrt{2\ln C}}^\infty \ \ell t^{\ell-1}\bbP(|\fkt|\geq t)\, \mathrm{d}t\\ 
&\leq\ K^{\ell}(2\ln C)^{\frac{\ell}{2}}+\int_0^\infty \ell t^{\ell-1}  e^{-\frac{t^2}{2K^2}}\, \mathrm{d}t\\
&\leq\ K^{\ell}(2\ln C)^{\frac{\ell}{2}}+\ell K^{\ell}2^{\frac{\ell}{2}-1} \Gamma\left(\frac{\ell}{2}\right),
\end{align*}
where in the first integral we estimate $\bbP(|\fkt|\geq t)$ by $1$ and to bound the second integral we apply \eqref{eq:boundddd} and then extend the domain of the integration.

Using induction on $\ell$ we bound $\ell2^{\frac{\ell}{2}-1}\Gamma(\ell/2)$ by $
\left(\pi \ell/2\right)^{\frac{\ell}{2}}$, which gives
\[
 \bbE|\fkt|^{\ell}\ \leq\ K^{\ell}\left(\left(2\ln C\right)^{\frac{\ell}{2}}+\left(\frac{\pi \ell}{2}\right)^{\frac{\ell}{2}}\right)\ \leq\ K^{\ell}\left(\left(2\ln C\right)^{\frac{\ell}{2}}+\left(\frac{\pi }{2}\right)^{\frac{\ell}{2}}\right)\ell^{\frac{\ell}{2}}.
\]
Now the claim follows from the inequality comparing the $\ell$-norm and the $1$-norm.

For the last claim, by the same argument as above, we obtain
\begin{align*}
\bbE|\fkt|\ \leq\ K\sqrt{2\ln C}+\int_{K\sqrt{2\ln C}}^{\infty}e^{-\frac{t^2}{2K^2}}\,\mathrm{d}t.
\end{align*}
Finally, for $t\geq K\sqrt{2\ln C}$ we have that $t^2\geq K\sqrt{2\ln C}\,t$ and the claim \eqref{eq:specialbound} follows from
\begin{align*}
\int_{K\sqrt{2\ln C}}^{\infty} e^{-\frac{t^2}{2K^2}}\,\mathrm{d}t\ \leq\ \int_0^{\infty} e^{-\frac{\sqrt{\ln C}}{\sqrt{2}K}t}\,\mathrm{d}t\ =\ \frac{\sqrt{2}K}{\sqrt{\ln C}}.
\end{align*}

\subsection[Proof of (3.5)]{Proof of \eqref{eq:yuprojjacobian}}\label{subsec:yuprojjacobian}

Because of the ``product"-like structure of $\yuproj$ it is enough to consider the case $d=1$,
\begin{align*}
    \yuproj:\bbR^{n-1}&\rightarrow \bbS^{n-1}\\
    \Vector{x}&\mapsto \frac{1}{\sqrt{1+\|\Vector{x}\|_2^2}}\begin{pmatrix}
    1\\\Vector{x}
    \end{pmatrix},
\end{align*}
and to prove
\begin{equation}\label{eq:yuprojjacobian2}
    |\det \diff_{\Vector{x}}\yuproj|\ =\ \left(1+\|\Vector{x}\|_2^2\right)^{-\frac{n}{2}},\quad \Vector{x}\in \bbR^{n-1},
\end{equation}
where $\diff_{\Vector{x}}\yuproj$ is written in some orthonormal bases of $\Tg_{\Vector{x}}\bbR^{n-1}$ and $\Tg_{\yuproj(\Vector{x})} \bbS^{n-1}$.

We fix $\Vector{x} \neq \Vector{0}$, as for $\Vector{x}=\Vector{0}$ the claim follows by continuity. Let us consider an orthonormal basis of $\Tg_{\Vector{x}}\bbR^{n-1}$ given by
\begin{equation*}
    \frac{\Vector{x}}{\|\Vector{x}\|_2},\Vector{v}_1,\ldots,\Vector{v}_{n-2},
\end{equation*}
where $\Vector{v}_1,\dots, \Vector{v}_{n-2}$ form a basis of the orthogonal complement of the line $\bbR \Vector{x}\subset \bbR^{n-1}$. Then vectors 
\begin{equation*}
\frac{1}{\sqrt{1+\|\Vector{x}\|_2^2}}\begin{pmatrix}
-\|\Vector{x}\|_2\\\Vector{x}/\|\Vector{x}\|_2
\end{pmatrix},\begin{pmatrix}
0\\\Vector{v}_1
\end{pmatrix},\ldots,\begin{pmatrix}
0\\\Vector{v}_{n-2}
\end{pmatrix}
\end{equation*}
form an orthogonal basis of $\Tg_{\yuproj(\Vector{x})}\bbS^{n-1}$.
A direct computation shows that
\begin{align*}
        \diff_{\Vector{x}}\yuproj\left(\frac{\Vector{x}}{\|\Vector{x}\|_2}\right)\ &=\ \left.\frac{\mathrm{d}}{\mathrm{d}t}\right|_{t=0}\yuproj\left(\Vector{x}+t\frac{\Vector{x}}{\|\Vector{x}\|_2}\right)\\
        &=\ \left.\frac{\mathrm{d}}{\mathrm{d}t}\right|_{t=0}\left[\frac{1}{\sqrt{1+\|\Vector{x}\|_2^2+2t\|\Vector{x}\|_2+t^2}}\begin{pmatrix}
    1\\\Vector{x}+t\frac{\Vector{x}}{\|\Vector{x}\|_2}
    \end{pmatrix}\right]\\ &=\ \frac{1}{1+\|\Vector{x}\|_2^2}\frac{1}{\sqrt{1+\|\Vector{x}\|_2^2}}\begin{pmatrix}
-\|\Vector{x}\|_2\\\Vector{x}/\|\Vector{x}\|_2
\end{pmatrix},\\
\diff_{\Vector{x}}\yuproj\left(\Vector{v}_i\right)\ &=\ \left.\frac{\mathrm{d}}{\mathrm{d}t}\right|_{t=0}\yuproj(\Vector{x}+t\Vector{v}_i)\ 
=\ \left.\frac{\mathrm{d}}{\mathrm{d}t}\right|_{t=0}\left[\frac{1}{\sqrt{1+\|\Vector{x}\|_2^2+t^2}}\begin{pmatrix}
    1\\\Vector{x}+t\Vector{v}_i
    \end{pmatrix}\right]\\ &=\ \frac{1}{\sqrt{1+\|\Vector{x}\|_2^2}}
    \begin{pmatrix}
0\\\Vector{v}_{i}
\end{pmatrix},\quad i=1,\dots, n-2.
\end{align*}
Finally, the desired formula \eqref{eq:yuprojjacobian2} follows from the fact that in the chosen orthonormal bases of $\Tg_{\Vector{x}}\bbR^{n-1}$ and $\Tg_{\yuproj(\Vector{x})}\bbS^{n-1}$ the differential $\diff_{\Vector{x}}\yuproj$ is given by the matrix
\[
\begin{pmatrix}
\frac{1}{1+\|\Vector{x}\|_2^2}&&& \textrm{{\Large $0$}}\\
&\frac{1}{\sqrt{1+\|\Vector{x}\|_2^2}}&&\\
&&\ddots&\\
\textrm{{\Large $0$}}&&&\frac{1}{\sqrt{1+\|\Vector{x}\|_2^2}}
\end{pmatrix}.
\]

\end{document}